\documentclass[11pt]{article}
\usepackage{setspace}
\usepackage{amsmath,amssymb}
\usepackage{amsthm}
\usepackage{algorithm}
\usepackage[noend]{algorithmic}
\usepackage{url}
\usepackage{fullpage}
\usepackage{makeidx}
\usepackage{enumerate}
\usepackage{graphicx,float,psfrag,epsfig}
\usepackage{color}

\numberwithin{equation}{section}

\newtheoremstyle{myexample} 
    {\topsep}                    
    {\topsep}                    
    {\rm }                   
    {}                           
    {\bf }                   
    {.}                          
    {.5em}                       
    {}  

\newtheoremstyle{myremark} 
    {\topsep}                    
    {\topsep}                    
    {\rm}                        
    {}                           
    {\bf}                        
    {.}                          
    {.5em}                       
    {}  

\newtheorem{claim}{Claim}[section]
\newtheorem{lemma}[claim]{Lemma}

\newtheorem{theorem}{Theorem}

\newtheorem{definition}[claim]{Definition}

\theoremstyle{myremark}
\newtheorem{remark}{Remark}[section]

\theoremstyle{myremark}

\theoremstyle{myexample}
\newtheorem{example}[remark]{Example}

\def\<{\langle}
\def\>{\rangle}

\def\bfe{{\boldsymbol{e}}}
\def\bone{{\boldsymbol{1}}}

\def\eps{{\varepsilon}}

\def\id{{\rm I}}
\def\sT{{\sf T}}

\def\OPT{{\sf OPT}}

\def\rank{{\rm rank}}

\def\P{{\mathbb P}}
\def\prob{{\mathbb P}}
\def\integers{{\mathbb Z}}
\def\naturals{{\mathbb N}}
\def\E{{\mathbb E}} 
\def\Var{{\sf{Var}}}

\def\one{\mathbf{1}}


\def\Var{{\sf Var}}

\def\cL{{\cal L}}

\def\Tr{{\sf {Tr}}}

\def\de{{\rm d}}

\def\cG{\mathcal{G}}
\def\d{{\mathrm{d}}}

\def\Var{{\sf Var}}

\def\spec{{\rm spec}}

\def\one{{\bf 1}}


\def\Var{{\rm Var}}

\def\rvec{{\rm vec}}
\def\maximize{{\rm maximize}}
\def\subjectto{{\rm subject to}}
\def\for{{\rm for}}

%
%
\def\reals{\mathbb{R}}

%
%

%
%
\def\bD{{\boldsymbol D}}
\def\bL{{\boldsymbol L}}
\def\bA{{\boldsymbol A}}
\def\bM{{\boldsymbol M}}

\def\by{{\boldsymbol y}}
\def\bx{{\boldsymbol x}}
\def\bz{{\boldsymbol z}}
\def\bv{{\boldsymbol v}}

\def\b\Pi{{\boldsymbol \Pi}}

\def\bL{{\boldsymbol L}}

\def\bX{{\boldsymbol X}}
\def\bY{{\boldsymbol Y}}

\def\bE{{\boldsymbol E}}

%
%

%

\def\sTV{{\rm TV}}   
%
%

\def\deg{{\rm deg}}

\def\Treg[#1]{T^{{\rm reg},#1}}
\def\GW[#1]{{\rm GW}(#1)}
\def\MGW[#1]{{\rm MGW}(#1)}

\def\toprob{\stackrel{p}{\longrightarrow}}

\def\id{\mathbf{I}}
\def\J{\mathbf{J}}

%
%

%
%

\title{A Statistical Model for Motifs Detection} 
\author{Hamid Javadi\thanks{Department of Electrical Engineering, Stanford University}\;\;
\; and \;\; Andrea Montanari\thanks{Department of Electrical 
Engineering and Statistics, Stanford University}}

\makeindex

\begin{document}

\maketitle

\begin{abstract}
We consider a statistical model for the problem of finding  subgraphs with specified topology in an otherwise random graph. 
This task plays an important role in the analysis of social and biological
networks. In these types of networks, small subgraphs with a specific structure
have important functional roles, and they are referred to as `motifs.'

Within this model, one or multiple copies of a subgraph is added (`planted')
in an Erd\H{o}s-Renyi random graph with $n$ vertices and edge probability $q_0$. 
We ask whether the resulting graph can be distinguished reliably from
a pure Erd\H{o}s-Renyi random graph, and we present two types of result.
First we investigate the question from a purely statistical
perspective, and ask whether there is \emph{any} test that can
distinguish between the two graph models. We provide necessary and
sufficient conditions that are essentially tight for small enough subgraphs.

Next we study two polynomial-time algorithms for solving the same
problem: a spectral algorithm, and a semidefinite programming (SDP)
relaxation. For the spectral algorithm, we establish sufficient
conditions under which it distinguishes the two graph models with high
probability. Under the same conditions the spectral algorithm indeed identifies the
hidden subgraph.

The spectral algorithm is substantially sub-optimal with respect to
the optimal test. We show that a similar gap is present for the more sophisticated SDP
approach. 
\end{abstract}

\section{Introduction}
\label{sec:intro}

`Motifs' play a key role in the analysis of social and biological
networks. Quoting from an influential paper in this area
\cite{milo2002network}, the term `motif' broadly refers to
 
\begin{quote}
``patterns of interconnections
occurring in complex networks at numbers that are significantly
higher than those in randomized networks.''
\end{quote}

For instance, the authors of \cite{milo2002network} considered directed graph
representations of various types of data: gene regulation
networks, neural circuits, food webs, the world wide web, electronic
circuits. They identified a number of small subgraphs that are found in
atypically large numbers in such networks, and provided interpretations
of their functional role.
The analysis of motifs in large biological networks was pursued in a
number of publications, see e.g. 
\cite{kashtan2004efficient,yeger2004network,kashtan2005spontaneous,song2005highly,alon2007network}.

The analysis of subgraph frequencies has an even longer history within
sociology, in part because sociological theories are predictive of
specific subgraph occurrences. We refer to
\cite{granovetter1973strength} for early insights, and to
\cite{wasserman1994social,easley2010networks} for recent reviews of
this research area. 

This paper studies a statistical model for the motif detection problem.
In order to provide a formal statement of the problem, denote by $\cG_n$ the space of graphs over vertex set $V_n=[n]$.
\begin{definition}
We say that the two (sequences of) probability laws $\prob_{0,n}$, $\prob_{1,n}$ over $\cG_n$ are 
\emph{strongly distinguishable} if there exists a sequence of functions (a `test') $T:\cG_n\to \{0,1\}$ such
that
\begin{align*}
\lim\sup_{n\to\infty}\prob_{0,n}\big(T(G_n) = 1\big) =
\lim\sup_{n\to\infty}\prob_{1,n}\big(T(G_n) = 0\big) = 0\, .
\end{align*}
We say that they are \emph{weakly distinguishable} if there exists
$T:\cG_n\to \{0,1\}$ such that 
\begin{align*}
\lim\sup_{n\to\infty}\Big[\prob_{0,n}\big(T(G_n) = 1\big) +\prob_{1,n}\big(T(G_n) = 0\big) \Big]< 1\, .
\end{align*}
We say that they are \emph{polynomial-time weakly (or strongly) distinguishable} if a test exists that achieves the 
above and can be computed by a polynomial-time algorithm.
\end{definition}

Throughout the paper, $\prob_{0,n}$ will correspond to a standard Erd\H{o}s-Renyi random graph, while $\prob_{1,n}$
will be an Erd\H{o}s-Renyi random graph  with planted copies  of a small graph $H_n$. 
Namely, fix $0\le q_0\le 1$, and let $H_n = (V(H_n),E(H_n))$ be a sequence of graphs, indexed by
$n$. Let us emphasize that $H_n$ is a non-random graph on $v(H_n) \equiv |V(H_n)|\ll n$ vertices.
Given a graph $F$, we denote by $\cL(F,n)$ the set of labelings of
the vertices of $F$ taking values in $[n]$, i.e.
\begin{align}
\cL(F,n) \equiv \Big\{\;\varphi:V(F)\to [n]\;\;\mbox{s.t.}\;\;
\varphi(i) \neq \varphi(j) \;\;\forall i\neq j\;\Big\}\, .
\end{align}
(In particular $| \cL(H_n,n)| = n!/(n-v(H_n))!$.)
For an edge $e=(i,j)\in E(H_n)$, we let $\varphi(e)$ be the unordered pair
$(\varphi(i),\varphi(j))$, and hence $\varphi(E(H_n)) = \{\varphi(e):\;
e\in E(H_n)\}$. 

We then let $\prob_{0,n}((i,j)\in E) = q_0$ independently for all pairs $(i,j)$. On the other hand 
\begin{align}
\prob_{1,n}(\,\cdot\,)& = \frac{1}{|\cL(H_n,n)|}\sum_{\varphi\in\cL(H_n,n)}\prob_{1,n}\big(\,\cdot\,\big|\varphi\big)\, ,\\
\prob_{1,n}\big((i,j)\in E\big|\varphi\big)& = \begin{cases}
1 & \mbox{ if $(i,j)\in \varphi(E(H_n))$,}\\
q_0 & \mbox{ otherwise.}
\end{cases}
\label{equation:alternative}
\end{align}
Here it is understood that edges are independent conditional on the labeling $\varphi$.
In Section \ref{sec:multipleplanted} we will generalize this defintion by considering the case in which a large number of identical
copies of $H_n$ is planted.

Whenever the two laws $\prob_{0,n}$ and $\prob_{1,n}$ are distinguishable, we will also say that the motif $H_n$
is detectable.
We will consider the following fundamental questions:
\begin{itemize}
\item Under which conditions on $q_0$, $\{H_n\}_{n\in\naturals}$ the
  are two laws $\prob_{0,n}$, $\prob_{1,n}$ weakly distinguishable? Under
  which conditions are they strongly distinguishable?
\item Assuming the conditions for distinguishability of $\prob_{0,n}$,
  $\prob_{1,n}$ are satisfied, under which conditions there exists  a
  polynomial-time computable test $T(\,\cdot\,)$ that distinguishes $\prob_{0,n}$ from $\prob_{1,n}$?
\item \emph{What features of the motif  $H_n$ controls it detectability?}
\end{itemize}

The first two questions have attracted  a substantial amount of work since the nineties for 
special cases such as planted cliques or dense subgraphs \cite{jerrum1992large,feige2000finding,feige2001dense}. 
A brief overview of this line of work is presented in Section \ref{sec:Related}. 
However, applied studies investigate a much broader collection of motifs than just cliques 
\cite{milo2002network,alon2007network,wasserman1994social,easley2010networks} and --in fact-- cliques
are rarely the most interesting motif from a scientific perspective. 

It is \emph{a priori} unclear whether intuitions developed on the planted clique problem apply to 
general motif detection. Quantitative implications, namely tight necessary and sufficient conditions for detectability
of an arbitrary motif $H_n$  are even less clear.
The present paper builds upon some key insights that were developed in the earlier literature, to obtain a broader picture 
that is potentially applicable to motifs of scientific interest.

In the rest of the paper, we present the following results:
\begin{enumerate}
\item Section \ref{sec:Statistical} presents upper and lower bounds on the statistical threshold 
for motif detection. The bounds match for sufficiently small subgraphs (in particular for $v(H_n)= n^{o(1)}$) 
yielding a sharp characterization in that regime. The key feature controlling statistical detection turns out to be the maximum 
graph density $d(H_n)$ (see Section \ref{sec:Statistical} for a definition).

Determining sharp detectability conditions for larger motifs, or for background density $q_{0}$ depending on $n$, remains an open problem. 
\item In order to explore the computational limits of motif detection, Section \ref{sec:Spectral} analyzes a simple spectral algorithm
that computes the leading eigenvector of the centered adjacency matrix of $G$. The key feature controlling 
the behavior of this algorithm is the leading eigenvalue  of the adjacency matrix of $H_n$. We prove that
the spectral approach succeeds with high probability if this eigenvalue is larger than  $C\sqrt{n}$.

We also show that, in the same regime, the spectral algorithms can be augmented with a combinatorial step that 
identifies the planted subgraph. We prove that this step is successful under a certain `balancedness' condition on the motif $H_n$.
\item Section \ref{sec:SDP} apply a semidefinite programming (SDP) relaxation of the quadratic
assignment problem  to motif detection. 
We prove that the SDP approach is not successful unless --again-- the leading eigenvalue of $H_n$
is of order $\sqrt{n}$. In other words, a similarly large gap between
statistical and algorithmic thresholds exists
for SDP as for spectral methods.

As is often the case in proving negative results for SDP relaxation, our analysis relies on a careful construction 
of a primal witness. This construction is of potential interest for other applications of quadratic assignment.
\item Finally, Section \ref{sec:multipleplanted} considers the more general case in which $m_n$ copies of the 
motif $H_n$ are planted in the same graph. We obtain upper and lower bounds on the statistical threshold.
These bounds match when the motif size is small enough or the number of copies is small enough. 
\end{enumerate}

These results suggest that motifs detection  might be computationally hard in regimes in
which it is statistically feasible.  As discussed in the next section, this phenomenon has been already extensively investigated for the planted clique problem.
More interestingly, our analysis suggests that the two thresholds are controlled by different features of the motif $H_n$. 
While the statistical threshold depends on the maximum density of $H_n$, the spectral threshold is related to its principal eigenvalue.

\subsection{Related work}
\label{sec:Related}

Statistical models for motif detection have been studied so far only for specific cases.
An important line of work within theoretical computer science has focused on the planted clique problem, 
which corresponds to the case $H_n= K_{k(n)}$, the complete graph over $k(n)$ vertices.  It is a classical result
that the planted model is distinguishable from a pure Erd\H{o}s-Renyi random graph, for $q_0=1/2$, provided $k(n)\ge 2(1+\eps)\log_2 n$
(with $\eps$ arbitrarily small), while it is undistinguishable  for $k(n)\le 2(1-\eps)\log_2 n$, see e.g. \cite{grimmett1975colouring}.
On the other hand,  approximating the size of the largest clique in a graph is hard in a worst case setting, even within a factor 
$n^{1-\eps}$ \cite{hastad1996clique,khot2001improved}.
Starting with Jerrum's seminal work, a number of authors analyzed broad classes of algorithms for the
statistical model. A short  list includes Monte Carlo Markov Chain \cite{jerrum1992large}, spectral algorithms \cite{alon1998finding},
SDP relaxations within the Lovasz-Schrijver hierarchy \cite{feige2000finding}, statistical query models \cite{feldman2013statistical},
message passing algorithms \cite{deshpande2014finding}, and, most recently, SDP relaxations within the Sum-Of-Squares hierarchy
\cite{meka2015sum,deshpande2015improved,raghavendra2015tight,hopkins2015sos,barak2016nearly}.

While these works provide precious insights into the tradeoff between statistical and computational limits for graph estimation,
they cannot be applied directly to  general motif detection. In particular, they do not clarify what features of the motif $H_n$ are relevant 
for detection. 

The planted clique problem is arguably the most studied statistical problem presenting a large gap between optimal statistical estimation
and computationally efficient algorithms. In fact, several works \emph{assume} hardness of planted clique to prove hardness 
of other statistical estimation problems, see e.g. \cite{berthet2013complexity}. Similar in spirit is the recent work in \cite{daniely2014average},
that instead assumes hardness of refuting random satisfiability formulas, as well as \cite{agarwal2011oracle}.

A generalization of the maximum-clique problem is provided by the densest $k$-subgraph problem. Given a graph $G$ 
and an integer $k$, this requires to find a subset of vertices of size $k$ that contains the largest number of edges \cite{feige2001dense}.
The best polynomial-time algorithm guarantees an $O(n^{1/4+\eps})$ approximation ratio \cite{bhaskara2010detecting}.
The recent paper \cite{manurangsi2016almost} proves that an approximation ratio $n^{1/(\log\log n)^{\Omega(1)}}$ cannot be achieved under the exponential 
time hypothesis (see also \cite{feige2002relations,khot2006ruling,alon2011inapproximability} for other inapproximability results). 
Semidefinite programming relaxations were studied, among others in \cite{feige1997densest,bhaskara2012polynomial}.

Once more, these results do not apply directly to the motif detection problem. In particular, it is not clear --in general-- that 
searching for a dense subgraph is necessarily the best approach to detect a given motif $H_n$. 
Further, the focus of this line of work is on worst-case approximation guarantees, rather than on statistical thresholds.

Closer to the scope of the present paper is some of the recent work on `community detection'. A random graph $G=(V,E)$ over $|V|=n$ vertices is generated 
by selecting a subset $S\subseteq V$ of $|S|=k$ vertices uniformly at random. Edges are conditionally independent given $S$. Two vertices
$i,j$ are  connected by an edge with probability $q_1$ if $\{i,j\}\subseteq S$ and probability $q_0<q_1$ otherwise. 
Statistical and computational thresholds for detection and estimation of the set $S$ were studied by a number of authors
\cite{verzelen2015community,arias2014community,hajek2014computational,montanari2015finding,hajek2015information}.
Again, these works focuses uniquely on the density of the planted subgraph, and do not model its structure.

Following the original work on motifs in biological networks \cite{milo2002network}, several papers developed algorithms to sample uniformly
random networks with certain given characteristics, e.g. with a given degree sequence \cite{milo2003uniform,thorne2007generating,blitzstein2011sequential}.
Uniform samples can be used to assess the significance level of specific subgraph counts in a real network under
consideration.
Let, for instance, $N_{H}(G)$ denote the number of copies of a certain small graph $H$ in $G$. If in a real network of interest we find
$N_H(G) = t$, the probability $\prob_{0,n}(N_{H_n}(G_n)\ge t)$ is used as significance level for this discovery.
Let us mention two key differences with respect to problem considered in the present paper.
First, we focus on conditions  under which the laws $\prob_{0,n}$ and $\prob_{1,n}$ are strongly
distinguishable. For instance, in the case of a single planted subgraph, under the null model $N_{H_n}(G_n)=0$ with
  very high probability. Hence, Monte Carlo is not effective in accurately computing $p$-values in this regime.
 
Second, the work of
  \cite{milo2003uniform,thorne2007generating,blitzstein2011sequential}
implicitly assumes that the subgraph $H_n$ has bounded size so that
$N_{H_n}(G_n)$ can be computed in time $n^{v(H_n)}$ by exhaustive search. Here we consider instead large subgraphs  $H_n$, and address the computational
challenge of testing for $H_n$ in polynomial time.
Let us emphasize that, while we typically assume $H_n$ to have
diverging size, in practice it is impossible to perform exhaustive
search already for quite small subgraphs. For instance, if $n=10^5$,
and $v(H_n) = 6$, exhaustive search requires of the order of $10^{30}/6!\ge
10^{27}$ operations.

\subsection{Notations}

Given $n\in\integers$, we let $[n] = \{1,2,\dots,n\}$
denote the set of first $n$ integers.  We write $|S|$ for the cardinality of a
set $S$. We denote by $(n)_k = n!/(n-k)!$ the incomplete factorial.

Throughout this paper, we will use
lowercase boldface (e.g. $\bv = (v_1,\dots,v_n)$, $\bx = (x_1,\dots,x_n)$, etc.) 
to denote vectors and uppercase boldface (e.g. $\bA = (A_{i,j})_{i,j\in[n]}$, $\bY= (Y_{i,j})_{i,j\in[n]}$, etc.)
to denote matrices.
For a vector $\bv\in \reals^n$ and a set $A \subseteq [n]$, 
we define $\bv_A\in \reals^n$ as $(v_A)_i = (v)_i$ for $i \in A$ and $(v_A)_i = 0$,
otherwise. For a matrix $\bY$ we use 
$\left\|\bY\right\|_2$ to denote its spectral norm. Given a square matrix $\bX\in \reals^{n\times n}$, we denote its trace 
by $\Tr(\bX) = \sum_{i = 1}^{n} X_{i,i}$. 
Given a symmetric matrix $\bM$, we denote by $\lambda_1(\bM)\ge
\lambda_2(\bM)\ge \dots\ge \lambda_n(\bM)$ its ordered eigenvalues.

We denote by $\bone_n = (1,1,\dots, 1)\in\reals^n$ the all-ones vector, and by
$\id_{n}$, $\J_{n}=\bone_n\bone_n^{\sT}\in \reals^{n\times n}$ 
the identity and all-ones matrices, respectively. For a matrix $\bA \in \reals^{m\times n}$,
$\rvec(\bA) \in \reals^{mn}$ is the vector whose $l$'th entry is $A_{ij}$ where $i-1$ and $j-1$
are the quotient and the remainder in dividing $l$ by $n$, respectively.
Also, $\bfe_i\in \reals^{n}$ denotes the $i$'th standard unit vector.

A simple graph is a pair $G=(V,E)$, where $V$ is a vertex set and
$E$ is a set of unordered pairs $(i,j)$, $i,j\in E$. We will write
$V(G)$, $E(G)$ whenever necessary to specify which graph we are
referring to. Throughout, we will be focusing on finite graphs.
We let $v(G)= |V(G)|$, $e(G)= |E(G)|$. For $v\in V(G)$, degree of node $v$
is shown by $\deg(v)$.
For $H \subseteq G, v\in V(G)$, the number of nodes in $H$ which are connected to $v$
is denoted by $\deg_{H}(v)$.
We let $\cG_n$ be the set of
graphs over vertex set $V=[n]$.

We follow the standard Big-Oh notation. Given functions $f(n), g(n)$,
we write $f(n)=O(g(n))$ if there exists a constant $C$ such that
$f(n)\le C\, g(n)$, $f(n)=\Omega(g(n))$ if there exists a constant $C$ such that
$f(n)\ge g(n)/C$, and $f(n) = \Theta(g(n))$ if $f(n) = O(g(n))$ and
$f(n) = \Omega(g(n))$. Further $f(n) = o(g(n))$ if $f(n)\le C\, g(n)$
for all  $C>0$ and $n$ large enough, and $f(n) = \omega(g(n))$ if $f(n)\ge C\, g(n)$
for all  $C>0$ and $n$ large enough.


\section{Statistical limits on hypothesis testing} 
\label{sec:Statistical}

In this section we address the first question stated in Section \ref{sec:intro}:
under which conditions on $q_0$, $\{H_n\}_{n\in\naturals}$ are the two
laws $\prob_{0,n}$, $\prob_{1,n}$ distinguishable? We
focus on the case of a single planted subgraph, $m_n=1$, deferring the
generalization to $m_n>1$ to Section \ref{sec:multipleplanted}.
We note that strong and weak distinguishability are equivalent
to $\liminf_{n\to\infty}\|\prob_{0,n}-\prob_{1,n}\|_{\sTV} = 1$, and 
$\liminf_{n\to\infty}\|\prob_{0,n}-\prob_{1,n}\|_{\sTV}>0$, respectively.

Our results depend on the graph sequence $\{H_n\}_{n\in\naturals}$
through its \emph{maximum density} $d(H_n)$.
For a graph $H$, we define
\begin{align}
d(H) \equiv \max_{F\subseteq H} \left(\frac{e(F)}{v(F)}\right)\, .
\end{align}
The following theorem provides a sufficient condition
on the distinguishability of laws $\prob_0$, $\prob_1$.
\begin{theorem}\label{thm:upperbd}
Let $\{H_n\}_{n\ge 1}$ be a sequence of non-empty graphs such that $v(H_n) =
o(n)$ and for $q_0\in (0,1)$ let $\prob_{0,n}$ be the null model
with edge density $q_0$, and $\prob_{1,n}$ be planted model with
parameters $H_n$, $q_0$. If 
\begin{align}
\label{eq:upperbdcond}
\liminf_{n\to\infty} \frac{d(H_n)\log (1/q_0)}{\log n}
  >1\, ,
\end{align}
then the two laws $\prob_{0,n}$, $\prob_{1,n}$ are strongly distinguishable.
\end{theorem}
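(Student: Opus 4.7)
The plan is to construct a simple subgraph-count test based on the densest subgraph of $H_n$. For each $n$, I would choose $F^{\star}_n \subseteq H_n$ whose edge-to-vertex ratio equals $d(H_n)$ (or comes within $o(1)$ of it, if the maximum is merely approached), and define
\[
T(G_n) := \ind\{G_n \text{ contains a copy of } F^{\star}_n \text{ as a subgraph}\}.
\]
This test need not be polynomial-time computable, which is fine since the theorem concerns only the information-theoretic distinguishability of $\prob_0$ and $\prob_1$.

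Under $\prob_{1,n}$, the planted image of $H_n$ contains every subgraph of $H_n$, in particular a copy of $F^{\star}_n$, so $T(G_n)=1$ deterministically and $\prob_{1,n}(T(G_n)=0)=0$. The substantive step is bounding the false-alarm probability under $\prob_{0,n}$ via a first moment. Letting $N_{F^{\star}_n}(G_n)$ denote the number of copies of $F^{\star}_n$ as subgraphs of $G_n$, Markov's inequality gives
\[
\prob_{0,n}(T(G_n)=1)\;\leq\;\E_{0,n}[N_{F^{\star}_n}(G_n)]\;\leq\;n^{v(F^{\star}_n)}\, q_0^{e(F^{\star}_n)}\;=\;\exp\!\Big\{v(F^{\star}_n)\big[\log n - d(H_n)\log(1/q_0)\big]\Big\}.
\]
The hypothesis \eqref{eq:upperbdcond} makes the bracketed quantity at most $-\delta \log n$ for some fixed $\delta>0$ and all $n$ large enough; combined with $v(F^{\star}_n)\geq 1$, the whole expression is bounded by $n^{-\delta}$ and tends to $0$. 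The two bounds together give strong distinguishability.

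I do not anticipate a conceptual obstacle: the argument reduces to the classical first-moment calculation for subgraph counts in $G(n,q_0)$, and the condition in \eqref{eq:upperbdcond} is precisely the threshold at which the expected count of the densest subgraph crosses from $\infty$ to $0$. The only technical points are (i) selecting $F^{\star}_n$ carefully when the maximum defining $d(H_n)$ is not attained, for which a sub-maximizer of density $d(H_n)-o(1)$ suffices, and (ii) replacing $(n)_{v(F^{\star}_n)}$ by $n^{v(F^{\star}_n)}$, which is harmless since $v(F^{\star}_n)\leq v(H_n)=o(n)$. No second-moment or concentration argument is needed since the expectation itself vanishes.
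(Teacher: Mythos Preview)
Your proposal is correct and essentially identical to the paper's proof: both select a densest subgraph $F^{\star}_n\subseteq H_n$ with $e(F^{\star}_n)/v(F^{\star}_n)=d(H_n)$ and apply the first-moment bound $\prob_{0,n}(N(F^{\star}_n;G_n)>0)\le n^{v(F^{\star}_n)}q_0^{e(F^{\star}_n)}\to 0$. The only cosmetic difference is that the paper phrases this as showing $\de\prob_{1,n}/\de\prob_{0,n}\toprob 0$ under $\prob_{0,n}$ and then invokes Lemma~\ref{lemma:likelihoodtest}, whereas you write down the test $T=\ind\{N(F^{\star}_n;G_n)>0\}$ directly; also note that since $H_n$ is finite the maximum defining $d(H_n)$ is always attained, so your hedge about sub-maximizers is unnecessary.
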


\begin{remark}
The proof of this theorem also provides an explicit test $T:\cG_n\to
\{0,1\}$ which has asymptotically vanishing error probability under
the assumptions of the theorem. 
Let $k(n) = v(F_n)$, where $F_n\subseteq H_n$ is the subgraph of $H_n$
with the smallest number of vertices, such that $e(F_n)/v(F_n) =
d(H_n)$.  Then, the test developed in the proof requires searching
over all subsets of $k(n)$ vertices which, in most cases, is
non-polynomial. 
\end{remark}

The next theorem provides condition under which the two laws are indistinguishable.
\begin{theorem}\label{thm:lowerbdp1}
Let $\{H_n\}_{n\ge 1}$, $q_0\in (0,1)$, $\prob_{0,n}$, $\prob_{1,n}$
be as in Theorem \ref{thm:upperbd}.  
Then the two models are not 
weakly distinguishable if
\begin{align}
\limsup_{n\to\infty} \frac{d(H_n)\log(1/q_0) + (5/2)\log{v(H_n)}}{\log n}
<1\, .
\end{align}
Further, if $d(H_n) = o(\log{v(H_n)})$,
then the laws $\prob_0$, $\prob_1$ are not weakly distinguishable if
\begin{align}
\limsup_{n\to\infty} \frac{v(H_n)}{n^{1/2}} = 0.
\end{align}
\end{theorem}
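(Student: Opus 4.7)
The strategy is the second-moment method on the likelihood ratio. Set $L_n = d\prob_{1,n}/d\prob_{0,n}$; since $\|\prob_{0,n} - \prob_{1,n}\|_{\sTV}^2 \leq \tfrac{1}{4}(\E_{\prob_0}[L_n^2] - 1)$, it suffices to show $\E_{\prob_0}[L_n^2] \to 1$, which gives $\|\prob_{0,n} - \prob_{1,n}\|_{\sTV} \to 0$ and hence non-weak-distinguishability. A direct computation from the mixture form of $\prob_{1,n}$ yields $L_n(G) = q_0^{-e(H_n)}\,\prob_\varphi\bigl(\varphi(E(H_n)) \subseteq E(G)\bigr)$, where $\varphi$ is a uniform random element of $\tcL(H_n, n)$. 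Squaring and integrating under $\prob_0$ collapses the second moment to the combinatorial quantity
\[
\E_{\prob_0}[L_n^2] \;=\; \E_{\varphi_1, \varphi_2}\bigl[q_0^{-X(\varphi_1, \varphi_2)}\bigr],
\qquad X \,:=\, |\varphi_1(E(H_n)) \cap \varphi_2(E(H_n))|,
\]
with $\varphi_1, \varphi_2$ independent uniform labelings.

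Next, I would expand $q_0^{-X} = \prod_{e} \bigl(1 + (q_0^{-1} - 1)\mathbf{1}\{e \in \varphi_1(E(H_n)) \cap \varphi_2(E(H_n))\}\bigr)$ and integrate over both labelings, organizing the contribution by the isomorphism type of the common subgraph. The identity $\prob_\varphi\bigl(S \subseteq \varphi(E(H_n))\bigr) = \mathrm{Emb}(S, H_n)/(n)_{v(S)}$ for any fixed edge set $S$ then yields
\[
\E_{\prob_0}[L_n^2] \;=\; \sum_{[F]} (q_0^{-1} - 1)^{e(F)} \frac{\mathrm{Emb}(F, H_n)^2}{|\mathrm{Aut}(F)|\,(n)_{v(F)}},
\]
where the sum runs over isomorphism classes of graphs $F$ with no isolated vertices (plus the trivial empty class contributing $1$), and $\mathrm{Emb}(F, H_n)$ is the number of edge-preserving injections $V(F) \to V(H_n)$. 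The density hypothesis $\limsup d(H_n)/\log n < 1/\log(1/q_0)$ combined with the bound $e(F) \leq d(H_n)\, v(F)$ valid for any subgraph $F \subseteq H_n$ gives $(q_0^{-1} - 1)^{e(F)} \leq n^{(1-\eps)\, v(F)}$ for some $\eps > 0$ and all large $n$.

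The main obstacle is bounding the sum uniformly, since the crude bound $\mathrm{Emb}(F, H_n) \leq v(H_n)^{v(F)}$ yields terms of size $(v(H_n)^2\, n^{-\eps})^{v(F)}$, which already fails to be summable under $v(H_n) = o(n^{2/5})$ alone. I would split the sum into two regimes. For $F$ with $v(F) \leq K$ bounded, each term vanishes individually by the overlap-probability bound $\prob\bigl(|\varphi_1(V(H_n)) \cap \varphi_2(V(H_n))| = k\bigr) \lesssim v(H_n)^{2k}/(n^k\, k!)$ together with $X \leq \binom{k}{2}$; summing the finitely many contributions gives an $o(1)$ bulk. For $F$ with growing $v(F)$, one needs either a conditional second moment — replacing $\prob_{1,n}$ by its restriction to a ``good'' event on which the planted copy of $H_n$ does not produce atypically large common substructures, with the complement shown to have vanishing probability under $\prob_{1,n}$ — or a sharper bound on $\mathrm{Emb}(F, H_n)$ exploiting the limited number of dense subgraphs of $H_n$ (a consequence of the density hypothesis via a first-moment count). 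The exponent $n^{2/5}$ should arise precisely from balancing the overlap scale $(v(H_n)^2/n)^{k}$ against the density-induced factor $q_0^{-X} \leq n^{(1-\eps)\, k}$ once the large-overlap tail has been truncated.
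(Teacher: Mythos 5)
Your reduction is correct as far as it goes: the bound $\|\prob_{0,n}-\prob_{1,n}\|_{\sTV}^2\le\tfrac14(\E_0[L_n^2]-1)$, the mixture form of $L_n$, and the identity $\E_0[L_n^2]=\E_{\varphi_1,\varphi_2}[q_0^{-X}]$ (equivalently your sum over isomorphism classes $F\subseteq H_n$) are all valid, and this quantity, restricted to overlaps of at least two vertices, is exactly the ratio $\bar\Delta(n,H_n)/(\E_0 N(H_n;G_n))^2$ that drives the paper's argument. But your proposal stops at the decisive point: you observe that the crude bound $\mathrm{Emb}(F,H_n)\le v(H_n)^{v(F)}$ is not summable, and then only list possible remedies (a conditional second moment after truncating $\prob_{1,n}$ to a good event, or an unspecified sharper embedding count), asserting that the exponent $2/5$ ``should arise'' from a balance you do not carry out. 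That balance is the entire content of the theorem; nothing in the proposal shows how the two hypotheses $\limsup_n d(H_n)/\log n<1/\log(1/q_0)$ and $v(H_n)=o(n^{2/5})$ combine to control overlaps of growing size, which is precisely the regime you leave open. As written, this is a plan, not a proof.

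For comparison, the paper does not prove $\E_0[L_n^2]\to1$ directly. Lemma \ref{lemma:likelihoodtest} reduces non-weak-distinguishability to $N(H_n;G_n)/\E_0 N(H_n;G_n)\toprob 1$ under $\prob_{0,n}$; the lower tail is handled by Janson's inequality and the upper tail by the fact that this ratio has mean one. The quantity that must vanish is still $\bar\Delta/(\E_0 N)^2$, but it is estimated by grouping ordered pairs of labelings by overlap size $u$ rather than by the isomorphism type of the shared subgraph: the pair count contributes a factor of order $k_n^{2u}/(u!\,n^u)$, the shared edges are bounded via $e_{H_n}(u)\le u\,d(H_n)$, and Stirling bookkeeping turns the sum into $\sum_u e^{-uf(u)}$ with $f(u)-\log k_n/u\ \ge\ \log n-(e_{H_n}(u)/u)\log(1/q_0)-\tfrac52\log k_n+C$; the $\tfrac52\log k_n$ term is where $v(H_n)=o(n^{2/5})$ enters. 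So the ingredients missing from your sketch are the $1/u!$ from counting overlap patterns and the per-overlap-size use of the density bound $e_{H_n}(u)\le u\,d(H_n)$ --- no conditioning or truncation of $\prob_{1,n}$ is used anywhere. (Your concern about large overlaps when $d(H_n)$ is near the threshold and $v(H_n)$ is near $n^{2/5}$ is not misplaced --- that is also the delicate regime in the paper's computation --- but identifying the difficulty is not the same as resolving it, and the resolution is what the theorem requires.)
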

Note that, under the condition $v(H_n)=o(n^\alpha)$ for any $\alpha>0$ (i.e. when the hidden
subgraph is `not too large'), or when $d(H_n)/\log v(H_n)\to \infty$ as $n \to \infty$ (i.e. when the hidden
subgraph is `dense enough'),
this bound matches the positive result of Theorem \ref{thm:upperbd}. We illustrate these results with a few examples
in Appendix \ref{section:statisticalexamples}.

\section{Computationally efficient tests}
\label{sec:Algo}

In this section we study two computationally plausible
algorithms for detecting the planted subgraph in the setting described in Section \ref{sec:intro}.
The first method leverages the spectral 
properties of the given graph for solving the problem.
In this case, we establish sufficient conditions under which the
algorithm succeeds with high probability. We then show that a
modification of the  spectral algorithm can be used to identify the hidden subgraph.
The second approach uses an SDP relaxation of the problem, that is
\emph{a priori} more powerful than the spectral approach. 

\subsection{Spectral algorithm}
\label{sec:Spectral}

For $p \in [0,1)$ we denote by $\bA_G^p$ the \emph{shifted adjacency
matrix} of the graph $G$, defined as follows:
\begin{align}\label{equation:encodingadjacency}
\left(\bA_{G}^p\right)_{ij} = \begin{cases}
1 & \mbox{ if $(i,j)\in E(G)$,}\\
-p/(1-p) & \mbox{ otherwise.}
\end{cases}
\end{align}
Further, we will denote by $\bA_G = \bA_G^0$ the $0-1$ adjacency matrix of $G$.
Recall that $\lambda_1(\bA_G^p)\ge \lambda_2(\bA_G^p)\ge \dots\ge
\lambda_n(\bA_G^p)$ denote the eigenvalues of $\bA_G^p$. The spectral test is
simply based on the leading eigenvalue:
\begin{align}
T_{\spec}(G) &= \begin{cases}
1 & \mbox{ if $\lambda_1(\bA_G^{q_0})\ge 2.1\, \sigma(q_0)\sqrt{n}$,}\\
0 & \mbox{otherwise\, ,}
\end{cases}\label{eq:TspecDef}\\
\sigma(q_0)&\equiv \sqrt{\frac{ q_0}{1-q_0}}\, . \label{eq:sigmaDef}
\end{align}
This algorithm was first proposed for the planted clique problem in \cite{alon1998finding}.
Note that this test uses the knowledge of $q_0$, but does not assume the knowledge of
planted subgraph $H_n$.
\begin{theorem}\label{thm:SpectralTest}
Let $\{H_n\}_{n\ge 1}$ be a sequence of non-empty graphs such that $v(H_n) =
o(n)$ and for $q_0\in (0,1)$ let $\prob_{0,n}$ be the null model
with edge density $q_0$, and $\prob_{1,n}$ be planted model with
parameters $H_n$, $q_0$. Define $\sigma(q_0)$ as per
Eq.~(\ref{eq:sigmaDef}). If
\begin{align}
\liminf_{n\to
  \infty}\, \frac{\lambda_{1}(\bA_{H_n})}{\sqrt{n}}> 3\sigma(q_0)\, ,\label{eq:SpectralTestThreshold}
 \end{align}
then the two laws $\prob_{0,n}$, $\prob_{1,n}$ are strongly distinguishable.
\end{theorem}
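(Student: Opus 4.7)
The plan is to prove the two halves of strong distinguishability by analyzing the leading eigenvalue $\lambda_1(\bA_G^{q_0})$ separately under $\prob_{0,n}$ and $\prob_{1,n}$, showing that the threshold $2.1\,\sigma(q_0)\sqrt{n}$ lies strictly between the two whp.

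First I would treat the null. Under $\prob_{0,n}$ the matrix $\bA_G^{q_0}$ is symmetric with independent entries on and above the diagonal (modulo the fixed diagonal, whose contribution to $\lambda_1$ is $O(1)$). By direct computation the off-diagonal entries are centered, bounded in $[-q_0/(1-q_0),1]$, and have variance exactly $\sigma(q_0)^2 = q_0/(1-q_0)$. This is precisely a bounded Wigner-type matrix, so a standard spectral norm bound (Bai--Yin, F\"uredi--Koml\'os, or Bandeira--van Handel) yields
\begin{align*}
\lambda_1(\bA_G^{q_0}) \le (2 + o(1))\,\sigma(q_0)\sqrt{n}
\end{align*}
whp under $\prob_{0,n}$. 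In particular $\lambda_1(\bA_G^{q_0}) < 2.1\,\sigma(q_0)\sqrt{n}$ whp, so $\prob_{0,n}(T_{\spec}(G_n)=1)\to 0$.

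Next I would handle the alternative by a second-moment/variational argument. Condition on the random labeling $\varphi_0$ (all bounds will be uniform in $\varphi_0$, so no loss). Let $\bv\in\reals^{v(H_n)}$ be the unit top eigenvector of $\bA_{H_n}$ and lift it to $\bu\in\reals^n$ by $u_{\varphi_0(i)}=v_i$ and $u_k=0$ for $k\notin\varphi_0(V(H_n))$; then $\|\bu\|_2=1$. By the variational characterization,
\begin{align*}
\lambda_1(\bA_G^{q_0}) \ge \bu^{\sT}\bA_G^{q_0}\bu
= \sum_i u_i^2 + 2\!\!\!\sum_{\substack{i<j\\ (i,j)\in\varphi_0(E(H_n))}}\!\!\! u_iu_j \;+\; Z,
\end{align*}
where $Z \equiv 2\sum_{i<j,\,(i,j)\notin\varphi_0(E(H_n))} u_iu_j (A_G^{q_0})_{ij}$. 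The deterministic piece from planted edges equals $\bv^{\sT}\bA_{H_n}\bv=\lambda_1(\bA_{H_n})$, and the diagonal contribution is $\|\bu\|_2^2=1$. The remaining term $Z$ is a sum of independent centered random variables (conditional on $\varphi_0$) with
\begin{align*}
\Var(Z) \;\le\; 4\sigma(q_0)^2 \sum_{i<j} u_i^2 u_j^2 \;\le\; 2\sigma(q_0)^2\|\bu\|_2^4 \;=\; 2\sigma(q_0)^2,
\end{align*}
so Chebyshev gives $Z = O_P(1)$. Combining with the hypothesis \eqref{eq:SpectralTestThreshold}, which says $\lambda_1(\bA_{H_n}) \ge (3-o(1))\sigma(q_0)\sqrt n$, we get
\begin{align*}
\lambda_1(\bA_G^{q_0}) \;\ge\; \lambda_1(\bA_{H_n}) + 1 + Z \;\ge\; (3-o(1))\sigma(q_0)\sqrt{n} - O_P(1),
\end{align*}
which exceeds $2.1\sigma(q_0)\sqrt{n}$ whp, giving $\prob_{1,n}(T_{\spec}(G_n)=0)\to 0$.

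The only nontrivial ingredient is the Wigner-type upper bound on $\lambda_1(\bA_G^{q_0})$ under $\prob_{0,n}$; the alternative side is a one-line application of Rayleigh quotients plus Chebyshev. The main care is to ensure that the spectral-norm bound used in the null applies to our specific two-point Bernoulli distribution and holds whp (not just in expectation); a trace-method argument on $\Tr((\bA_G^{q_0})^{2\lceil \log n\rceil})$, or a direct appeal to a modern non-asymptotic matrix concentration result such as Bandeira--van Handel's inequality for symmetric matrices with bounded independent entries, suffices. The slack factor $2.1$ versus the limiting value $2$ (and the slack factor $3$ versus $2$ in the hypothesis) is exactly what absorbs the $o(1)$ and $O_P(1)$ terms.
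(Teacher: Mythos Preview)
Your proof is correct and follows the same overall architecture as the paper: Bai--Yin/Wigner for the null, then a Rayleigh-quotient lower bound with the lifted top eigenvector of $\bA_{H_n}$ for the alternative. The one substantive difference is how you control the noise term under $\prob_{1,n}$. The paper bounds $\langle\bu,\bE\bu\rangle$ by the operator norm of the $v(H_n)\times v(H_n)$ noise submatrix $\b\Pi_n\bE_n\b\Pi_n^{\sT}$, invoking a random-matrix concentration result (their Theorem~2.3.6 from Tao) to get $O(\sqrt{v(H_n)})$, and then uses the hypothesis $v(H_n)=o(n)$ to make this $o(\sqrt n)$. Your direct second-moment computation $\Var(Z)\le 2\sigma(q_0)^2$ followed by Chebyshev gives $Z=O_P(1)$, which is both more elementary (no RMT needed on the alternative side) and does not require the assumption $v(H_n)=o(n)$ for this step. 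The paper's route has the advantage of reusing the same RMT machinery it needs elsewhere (e.g.\ in the proof of Theorem~\ref{thm:spectralalgorithm}); yours has the advantage of being self-contained. A minor quibble: the diagonal of $\bA_G^{q_0}$ is $-q_0/(1-q_0)$ rather than $1$, but as you note this only shifts things by $O(1)$ and is harmless.
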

\begin{remark}
The constant $2.1$ in Eq.~(\ref{eq:TspecDef}) can be
reduced to $2+\eps$ for any $\eps>0$. In addition, we expect that with further work the constant $3$ in
Eq.~(\ref{eq:SpectralTestThreshold})
can be reduced to $1+\eps$ for any $\eps>0$.
These improvements are not the focus of the present paper.
\end{remark}

\begin{algorithm}
\caption{Spectral algorithm for identifying hidden subgraphs in $G$\label{algorithm:spectral}}
\textbf{Input: }Graph $G$, edge probability $q_0$, size  of hidden
subgraph $k=v(H)$\\
\textbf{Output: } Estimated support of the hidden subgraph  $S \subseteq V(G)$\\
\textbf{Initialize: } $n=v(G)$, $t= kq_0 + 3\sqrt{kq_0\log{k}}$,  $S = \varnothing$
\begin{algorithmic}[1]
\FOR{$i\in V(G)$}
\STATE Set $\bv^{(i)}\equiv$ principal eigenvector of
$(\bA_G^{q_0})_{-i,-i}$.
\STATE Order the entries of $\bv^{(i)}$: $|v^{(i)}_{j(1)}|\ge |v^{(i)}_{j(2)}|\ge \dots\ge |v^{(i)}_{j(n)}|$
\STATE  Set $S_i\equiv\{ j(1),\dots,j(k)\}$
\STATE Set $d^{(i)} \equiv $ \# of edges between vertex $i$ and vertices in $S_i$
\IF{$d^{(i)} > t$}
\STATE $S = S\cup\{i\}$
\ENDIF
\ENDFOR
\ENSURE $S$
\end{algorithmic}
\end{algorithm}
Can spectral methods be used to identify the hidden subgraph?
We start by noting that, even if $H_n$ can be detected, a subset of its
node might remain un-identified. As an example, let $H_n$
be a graph over $k(n)$ vertices, whereby vertices $\{1,\dots,k(n)-1\}$
are connected by a clique, and vertex $k(n)$ is connected to the
others by a single edge, see figure below:

\phantom{A}\hspace{4cm}\includegraphics[width=0.28\textwidth, height=0.35\textwidth,angle=-90]{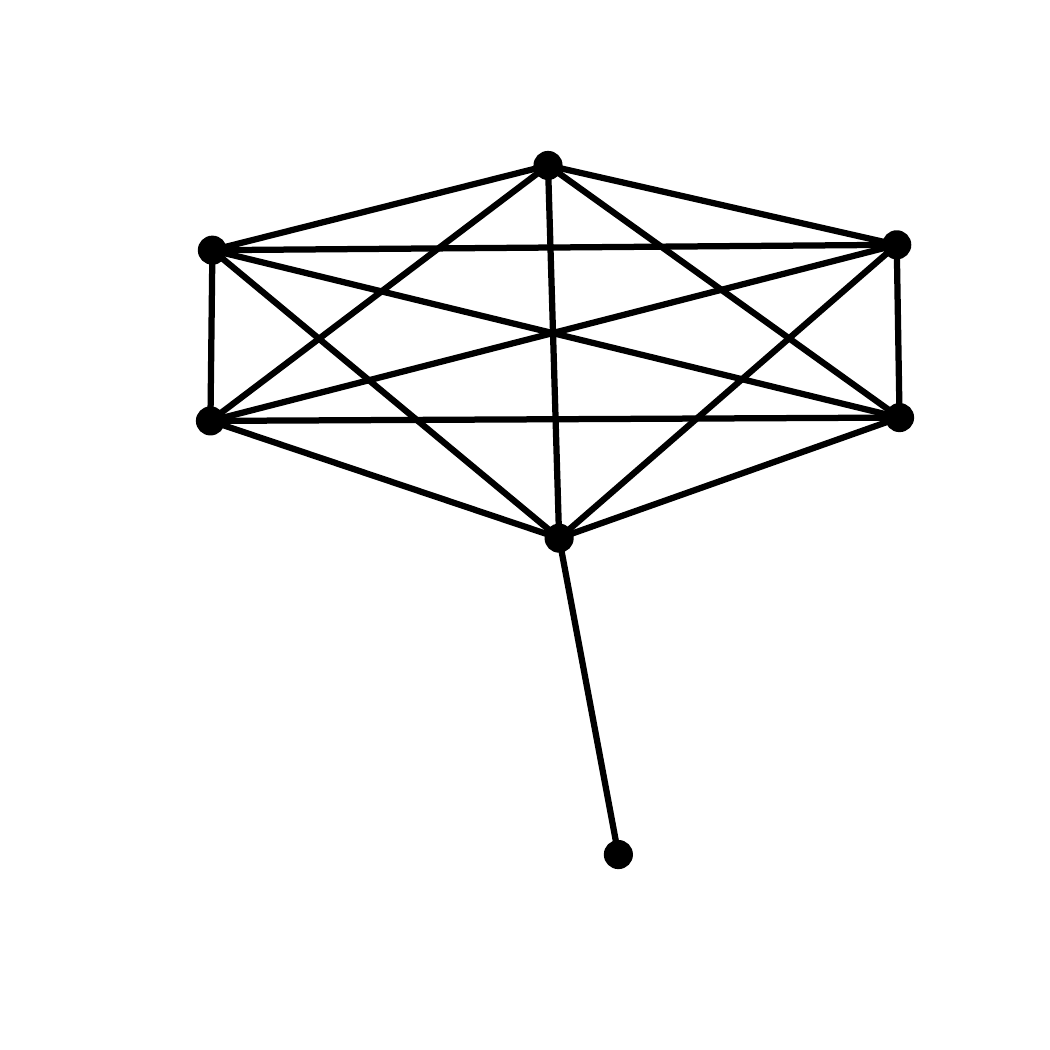}

Then Example \ref{example:FirstClique} implies that $H_n$ can be
detected with high probability as soon as $k(n)\ge (1+\eps) \log
n/\log(1/q_0)$. As we will see below, the spectral algorithm detects
$H_n$ with high probability  if $k(n)\ge 3\sigma(q_0)\sqrt{n} =
\Theta(\sqrt{n})$.
However it is intuitively clear (and not hard to prove) that the
degree-one vertex in $H_n$ cannot be identified reliably.

With this caveat in mind, Algorithm 1  gives a spectral approach to
identify a subset of the vertices of the hidden subgraph.
In order to characterize the set of `important' vertices of $H_n$,
we introduce the following notion.
\begin{definition}\label{def:significantset}
Given a graph $H=(V(H),E(H))$, and $c\in (0,1)$, we define the
$c$-significant set of $H$, 
$S_c(H)\subseteq V(H)$ as the following set of vertices
\begin{align}
S_c(H) := \big\{i \in V(H): \deg(i) > c\, v(H)\big\}\, .
\end{align}
\end{definition}

We also need to to assume that the leading eigenvector of $H$ is
sufficiently spread out.
\begin{definition}\label{def:balancedspectrum}
Let  $H=(V(H),E(H))$ be a graph with adjacency matrix
$\bA_H \in \{0,1\}^{n\times n}$.
For $\eps\in(0,1)$, we say that $H$ has spectral expansion $\eps$, if
\begin{align}
1-\eps \geq \frac{\max\big(\lambda_2(\bA_H);-\lambda_n(\bA_H)\big)}{\lambda_1(\bA_H)}
\end{align}
Finally let $\bv$ be the leading eigenvector of $\bA_H$.
We say that $H$ is \emph{$(\eps,\mu)$-balanced in spectrum} if it has spectral expansion $\eps$ and
\begin{align}
\min_{i \in V(H)} |v_i|\ge\frac{\mu}{\sqrt{v(H)}}.
\end{align}
\end{definition}

The following definition helps us present our result on Algorithm
\ref{algorithm:spectral}.
\begin{definition}
Let $H = \left(V(H),E(H)\right)$ be a  graph. For any $i\in V(H)$, 
the graph obtained by removing  $i$ from $H$ is denoted by $H\setminus
i$. Then:
\begin{enumerate}
\item We say that $H$ is \emph{$(\eps,\mu)$-strictly balanced in spectrum} if 
for all $i \in V(H)$, $H\setminus i$ is $(\eps,\mu)$-balanced in
spectrum.
\item We
define $\lambda_{-}(H)$ as 
\begin{align}
\lambda_{-}(H) \equiv \min_{i\in
  V(H)}\lambda_{1}(\bA_{H\setminus i})
  \, . \label{eq:LambdaMinus}
\end{align}
\end{enumerate}
\end{definition}

The next theorem states sufficient conditions under which Algorithm  \ref{algorithm:spectral}
succeeds in identifying the significant set of the planted subgraph.
\begin{theorem}\label{thm:spectralalgorithm}
Given $\{H_n\}_{n\in \naturals}$, $q_0\in (0,1)$, let  $\prob_{1,n}$
be the law of the random graph with edge density $q_0$ and planted
subgraph $H_n$, cf. Section \ref{sec:intro}, and assume $G_n\sim\prob_{1,n}$. 
Assume $v(H_n) = o(n)$ and that, for each $n$, $H_n$ 
is $(\eps,\mu)$-strictly balanced in spectrum
for some $\mu>0, \eps\in (0,1)$. Let $\delta$ be such that
\begin{align*}
\frac{2\delta}{\mu^2(1-\delta)} < 1.
\end{align*}
Finally, assume that
\begin{align}
 \liminf_{n\to\infty}\frac{|\lambda_{-}(H_n)|}{\sqrt{n}}>\frac{3\sigma(q_0)}{\eps\delta},
 \end{align}
where  $\lambda_{-}(H_n)$ is defined as per
Eq.~(\ref{eq:LambdaMinus}).

Let $S$ be the output of Algorithm \ref{algorithm:spectral}, and set
 $c > \alpha$,
$\alpha \equiv 2\delta/(\mu^2(1-\delta))$.
Then the following statements hold with high probability as $n,v(H_n) \to \infty$:
\begin{enumerate}
\item $S$ contains all the vertices of $G_n$ that correspond to the 
$c$-significant set $S_c(H_n)$ of planted subgraph $H_n$.
\item $S$ does not contain any vertex that does not correspond to those
  of the planted subgraph $H_n$.
\end{enumerate}
\end{theorem}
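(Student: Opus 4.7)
The plan is to show that Algorithm~\ref{algorithm:spectral} both recovers every vertex of $\varphi_0(S_c(H_n))$ and excludes all non-planted vertices. A key structural remark is that $S_i$ depends only on $(\bA_{G_n}^{q_0})_{-i,-i}$ and is therefore independent of the edges incident to $i$ in $G_n$. I would split the proof into three steps followed by a union bound.

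I would first bound the noise part of the shifted adjacency matrix. Write $\bA_{G_n}^{q_0} = \bM_n + \boldsymbol W_n$, where $\bM_n\in\{0,1\}^{n\times n}$ is supported on the planted image of $E(H_n)$ and $\boldsymbol W_n$ is mean-zero (equal to $1$ with probability $q_0$ and to $-q_0/(1-q_0)$ otherwise on the non-planted off-diagonal pairs, and vanishing on planted edges). The same spectral-norm estimate used in the proof of Theorem~\ref{thm:SpectralTest} gives $\|\boldsymbol W_n\|\le 3\sigma(q_0)\sqrt n$ \whp; by Cauchy interlacing the inequality $\|(\boldsymbol W_n)_{-i,-i}\|\le 3\sigma(q_0)\sqrt n$ then holds uniformly in $i\in[n]$ on the same event.

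Next I would control the leading eigenvector $\bv^{(i)}$ for each planted $i$. Setting $j=\varphi_0^{-1}(i)$, the matrix $(\bM_n)_{-i,-i}$ is (up to padding) the adjacency of $H_n\setminus j$; its principal eigenvalue is $\lambda_1(\bA_{H_n\setminus j})\ge |\lambda_-(H_n)|$, and the strictly-balanced-in-spectrum hypothesis on $H_n\setminus j$ provides a spectral gap of at least $\eps\,|\lambda_-(H_n)|$. Combining the Davis--Kahan $\sin\Theta$ theorem with the noise bound of the previous paragraph yields
\begin{align*}
\sin\theta_i \;\le\; \frac{3\sigma(q_0)\sqrt n}{\eps\,|\lambda_-(H_n)|} \;<\; \delta ,
\end{align*}
hence $\min_{s\in\{\pm 1\}}\|\bv^{(i)} - s\boldsymbol u^{(i)}\|_2^2 \le 2\delta/(1-\delta)$, where $\boldsymbol u^{(i)}$ is the padded leading eigenvector of $(\bM_n)_{-i,-i}$. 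Since $\boldsymbol u^{(i)}$ is supported on the $v(H_n)-1$ planted vertices with entries of magnitude at least $\mu/\sqrt{v(H_n)}$, a case analysis on the $k$-th largest entry of $|\bv^{(i)}|$ (comparing to the threshold $\mu/(2\sqrt{v(H_n)})$) bounds the symmetric difference of $S_i$ and the planted support by $\alpha\,v(H_n)$ with $\alpha = 2\delta/(\mu^2(1-\delta))<1$.

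Finally I would analyze the degree test. For a non-planted $i$, every pair $(i,j)$ with $j\in S_i$ is Erd\H os--R\'enyi, so $d^{(i)}\mid S_i\sim\mathrm{Bin}(v(H_n),q_0)$; the standard binomial tail and a union bound over the $\le n$ non-planted vertices rule out $d^{(i)}>t$, establishing part~2. For a planted $i\in\varphi_0(S_c(H_n))$, $d^{(i)}$ splits as the deterministic planted contribution $|N_{H_n}(j)\cap S_i|$ and a $\mathrm{Bin}(\cdot,q_0)$ count on the remaining pairs. Using the Step~2 control on the missing planted neighbors together with binomial concentration,
\begin{align*}
d^{(i)} \;\ge\; k q_0 + (1-q_0)\bigl((1-\alpha)\deg_{H_n}(j)\bigr) - 3\sqrt{k q_0 \log k},
\end{align*}
which exceeds $t=k q_0+3\sqrt{k q_0\log k}$ because $\deg_{H_n}(j)>c\sqrt{v(H_n)\log v(H_n)}$ with $c=4/((1-\alpha)(1-q_0))$; a union bound over $\varphi_0(S_c(H_n))$ completes part~1.

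The hardest step is passing from the global $\ell_2$ eigenvector bound of Step~2 to a useful bound on $|N_{H_n}(j)\setminus S_i|$: a naive application gives only $|N_{H_n}(j)\setminus S_i|\le \alpha\,v(H_n)$, which can exceed $\deg_{H_n}(j)$ once the degree is of order $\sqrt{v(H_n)\log v(H_n)}$. To close this gap one must exploit the $\mu$-balance in a finer, essentially entrywise fashion---either by showing that the coordinates on which $\bv^{(i)}$ and $\boldsymbol u^{(i)}$ disagree are quasi-uniformly distributed over the planted support (so that their intersection with $N_{H_n}(j)$ is only an $\alpha$-fraction of $\deg_{H_n}(j)$), or via a dedicated entrywise perturbation analysis tailored to the structure of $\bM_n$. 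This combinatorial-probabilistic refinement, together with the uniformity of the Davis--Kahan estimate across $i$, is the technical heart of the argument.
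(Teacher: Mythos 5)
Your outline follows the paper's proof almost step for step: the paper also decomposes $\bA^{q_0}_{G_n}$ into a planted part plus a centered noise matrix controlled in operator norm by $O(\sigma(q_0)\sqrt{n})$, establishes the eigenvector overlap for each $(\bA^{q_0}_{G_n})_{-i,-i}$ (Lemma \ref{lemma:balancedspectrum}, proved by a direct Rayleigh-quotient comparison with $\lambda_1(\bA_{H_n\setminus j})$ rather than Davis--Kahan -- an inessential difference), converts the $\ell_2$ bound into the support estimate $|S_i\cap\varphi_0(V(H_n))|\ge(1-\alpha)v(H_n)$ via the $\mu$-lower bound on eigenvector entries (Lemma \ref{lemma:spectralalg}), and handles non-planted vertices exactly as you do, by a Chernoff bound and a union bound over $n$ vertices using $v(H_n)=\Omega(\sqrt n)$. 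So parts of your plan that you actually carry out coincide with the paper's argument.

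The problem is the step you yourself flag and leave open: for a planted vertex $i=\varphi_0(j)$ with $j\in S_c(H_n)$ you need a lower bound on $|N_{H_n}(j)\cap S_i|$, and the global estimate $|\varphi_0(V(H_n))\setminus S_i|\le\alpha v(H_n)$ says nothing useful when $\deg_{H_n}(j)=\Theta(\sqrt{v(H_n)\log v(H_n)})\ll \alpha v(H_n)$, since in principle all of $N_{H_n}(j)$ could be among the dropped coordinates. As written, your proposal therefore does not prove part~1; you name two possible repairs (quasi-uniform distribution of the perturbed coordinates, or an entrywise eigenvector analysis) but execute neither, and neither is routine. For comparison, the paper does not supply the refinement you ask for either: after noting that $S_i$ is independent of the edges incident to $i$, it writes $d^{(i)}=\sum_{l\le \deg(j)}Y_l+\sum_{l>\deg(j)}X_l$ with $Y_l\sim{\rm Bern}(1-\alpha+\alpha q_0)$, i.e.\ it treats each planted neighbor as belonging to $S_i$ with probability at least $1-\alpha$, deduces $\E d^{(i)}\ge v(H_n)q_0+(1-\alpha)(1-q_0)\deg_{H_n}(j)$, and concludes by Azuma's inequality. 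That is exactly the per-neighbor marginal statement you identified as requiring proof, asserted from the global $(1-\alpha)v(H_n)$ overlap without further argument. So your diagnosis of where the difficulty lies is accurate and targets the coarsest point of the published proof, but to have a complete argument you would still have to supply the missing delocalization/entrywise estimate (or some other reason the at most $\alpha v(H_n)$ dropped coordinates cannot preferentially cover $N_{H_n}(j)$), and until then part~1 of the theorem is not established by your proposal.
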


\begin{remark}
\label{remark:spectralcomplement}
Note that if $\min_{i\in V(H_n)} \deg(i) > cv(H_n)$
where $c$ is as in Theorem \ref{thm:spectralalgorithm}
(the minimum degree of nodes in the hidden subgraph is `sufficiently large'),
$S_c(H_n) = V(H_n)$ and under the assumptions of Theorem \ref{thm:spectralalgorithm}, Algorithm \ref{algorithm:spectral}
will find all the nodes of the planted subgraph $H_n$.

In the opposite case, $H_n$ contains some `low degree' vertices, namely,
for some $i \in V(H_n)$, $\deg(i) \leq cv(H_n)$,
we have $S_c(H_n)\subset V(H_n)$ strictly. Then,
in order to find all vertices of the planted subgraph $H_n$ in $G_n$,
after finding the output of Algorithm \ref{algorithm:spectral}, $S$,
we can select the nodes $i \in V(G_n)$ such that $\deg_{S}(i) > (1+\eps)q_0|S|$, for some $\eps > 0$.
Note that if $|S_c(H_n)|$ is $\omega(\log n)$ then 
for any $i \notin \varphi_0(H_n)$, $\deg_S(i) \leq (1+\eps)q_0|S|$ with high probability.
Hence, this procedure will not choose any node $i$ such that $i \notin \varphi_0(H_n)$.
Moreover, this procedure will find the planted subgraph $H_n$ if for all nodes $i \in V(H_n)$,  
$\deg_{S}(i) > (1+\eps)q_0|S|$.
\end{remark}

Note that for any graph $H = \left(V(H),E(H)\right)$,
$\lambda_1(\bA_H)\le v(H)$.
Hence by definition
\begin{align}
\lambda_{-}(H) \le v\left(H\setminus i\right)  \le\frac{v(H)}{q_0}\, .
\end{align}
Hence,  the assumptions of Theorem \ref{thm:spectralalgorithm} imply
in particular
\begin{align}
\label{eq:spectralnecessary}
\liminf_{n\to\infty}\frac{v(H_n)}{n^{1/2}} > 0\, .
\end{align}
We can compare this condition with the one of Theorem
\ref{thm:upperbd}.
If $H_n$ is a dense graph, we expect generically  $d(H_n)=
\Theta(v(H_n))$, and hence there is a large gap between the condition
of Theorem \ref{thm:upperbd} (that guarantees distinguishability)
and that of Theorem \ref{thm:spectralalgorithm}. We illustrate this
with a few examples in appendix \ref{section:spectralexamples}.

%
%
\subsection{SDP relaxation}
\label{sec:SDP}

Since the  spectral method is generally sub-optimal with respect to the statistical detection threshold,
 it is natural to look for more powerful algorithms.
In this section we use  an SDP relaxation of the quadratic assignment problem, first 
proposed in \cite{zhao1998semidefinite}, for motif detection.

Recall that we denote by $\bA_G$ the adjacency matrix of graph
$G$
\begin{align}
\left(\bA_{G}\right)_{ij} = \begin{cases}
1 & \mbox{ if $(i,j)\in E(G)$,}\\
0 & \mbox{ otherwise.}
\end{cases}
\end{align}
 We want to find a planted copy of a given graph $H$ in 
graph $G$. Let $v(H) = k, v(G) = n$. We consider therefore the problem
\begin{align}\label{equation:QAP}
    \begin{array}{ll}
    \mbox{\maximize}  & \Tr(\bA_H\b\Pi^\sT \bA_G\b\Pi)\\
    \mbox{\subjectto} & \b\Pi^\sT\b\Pi = \id_{k} \\
    & \b\Pi \in \{0,1\}^{n\times k}. \\
    \end{array}
\end{align}
This is a non-convex optimization problem known
as Quadratic Assignment Problem (QAP) and is well studied
in the literature, for example see \cite{burkard2013quadratic}.
We will denote the value of this problem as $\OPT(G;H)$.

Note indeed that, $\b\Pi\in \{0,1\}^{n\times k}$ is feasible if it contains exactly
one non-zero entry per column and at most one per row. Call $\varphi(i)\in [n]$ the position
of the non-zero-entry of column $i\in [k]$. Then $\varphi\in\cL(H,n)$
is a labeling of the vertices of $H$, and the objective function can
be rewritten as
\begin{align}
\Tr(\bA_H\b\Pi^\sT \bA_G\b\Pi) = 2\sum_{(i,j)\in
  E(H)}(\bA_G)_{\varphi(i),\varphi(j)}
\end{align}
Hence, if $G$ contains a planted copy of $H$ (e.g. under model
$G\sim\prob_1$), we have $\OPT(G;H)\ge 2\, e(H)$. This suggests the
following optimization-based test:
\begin{align}
T_{\OPT}(G) = \begin{cases}
1 & \mbox{ if $\OPT(G;H)\ge 2\,e(H)$,}\\
0 & \mbox{ otherwise.}
\end{cases}
\end{align}
The proof of Theorem \ref{thm:upperbd} suggests that this test is
nearly optimal, provided $d(H) = e(H)/v(H)$, i.e. $H$ has no subgraph
denser than $H$ itself\footnote{If this is the not case, the
  optimization problem (\ref{equation:QAP}) can be modified replacing
  $H$ by its densest subgraph.}. 
 
Unfortunately, in general, $\OPT(G;H)$ is NP-complete even to approximate within a
constant factor \cite{sahni1976p}.
We will then resort to an SDP relaxation of the same problem.
The following Lemma provides a different formulation of (\ref{equation:QAP}).
\begin{lemma}
\label{lemma:tensorformulation}
Let $\b\Pi^*$ be an optimal solution of
problem \eqref{equation:QAP}. Then $\rvec(\b\Pi^*) = \by^*$ such that $\by^*\by^{*^\sT} = \bY^*$ is an 
optimal solution of the following problem
\begin{align}\label{equation:tensorformulation}
    \begin{array}{ll}
    \mbox{\maximize}  & \Tr\left((\bA_G\otimes \bA_H)\bY\right) \\
    \mbox{\subjectto} & \bY \in \{0,1\}^{nk\times nk} \\
    & \bY \succeq 0 \\ 
    & \Tr (\bY \J_{nk}) = k^2\\
    & \Tr (\bY (\id_n\otimes(\bfe_i \bfe_i^\sT))) = 1 \;\;\;\;\;\;\;\;\;\; \for \; $i = 1,2,\dots,k$\\
    & \Tr (\bY ((\bfe_j\bfe_j^\sT)\otimes \id_k)) \leq 1 \;\;\;\;\;\;\;\;\;\; \for \; $j = 1,2,\dots,n$\\
    & \rank (\bY) = 1. \\
    \end{array}
\end{align}
\end{lemma}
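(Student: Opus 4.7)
The plan is to show that the map $\b\Pi \mapsto \bY = \rvec(\b\Pi)\rvec(\b\Pi)^\sT$ is an objective-preserving bijection between feasible points of the QAP \eqref{equation:QAP} and feasible points of the rank-one-constrained tensor program \eqref{equation:tensorformulation}; once this is established, the correspondence of optima is immediate. The objective equality is the standard Kronecker-trace identity
\begin{align*}
\Tr\big(\bA_H \b\Pi^\sT \bA_G \b\Pi\big) \;=\; \rvec(\b\Pi)^\sT (\bA_G \otimes \bA_H)\rvec(\b\Pi) \;=\; \Tr\big((\bA_G \otimes \bA_H)\bY\big),
\end{align*}
which follows from $(\bA_G \otimes \bA_H)\rvec(\b\Pi) = \rvec(\bA_G \b\Pi \bA_H)$ (row-wise vectorization, using the symmetry of $\bA_H$) combined with $\langle \rvec(X), \rvec(Y)\rangle = \Tr(X^\sT Y)$.

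For the forward direction, starting from a feasible $\b\Pi$ for \eqref{equation:QAP} I would verify each of the constraints of \eqref{equation:tensorformulation} in turn. The candidate $\bY = \rvec(\b\Pi)\rvec(\b\Pi)^\sT$ is manifestly rank-one, PSD, and $\{0,1\}$-valued. A direct calculation shows that $\id_n \otimes (\bfe_i \bfe_i^\sT)$ is a diagonal $0/1$ matrix whose support, under row-wise vectorization, corresponds to the $i$-th column of $\b\Pi$, giving $\Tr(\bY(\id_n \otimes \bfe_i\bfe_i^\sT)) = \sum_a \Pi_{a,i}^2 = 1$ (since $\b\Pi^\sT\b\Pi = \id_k$ together with $\b\Pi\in\{0,1\}^{n\times k}$ forces each column of $\b\Pi$ to contain exactly one $1$). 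Similarly $(\bfe_j\bfe_j^\sT) \otimes \id_k$ picks out the $j$-th row, giving row sums at most $1$. Finally, $\Tr(\bY \J_{nk}) = (\bone^\sT \rvec(\b\Pi))^2 = k^2$.

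The reverse direction is where the one real subtlety arises. Given $\bY$ feasible for \eqref{equation:tensorformulation}, the conditions $\bY \succeq 0$ and $\rank(\bY) = 1$ let us write $\bY = \by\by^\sT$, but only up to a global sign on $\by$. The combined entrywise constraint $\bY \in \{0,1\}^{nk \times nk}$ then forces $y_i^2 \in \{0,1\}$ and $y_i y_j \geq 0$ for all $i,j$, so all nonzero coordinates of $\by$ share a common sign; flipping globally if necessary, we take $\by \in \{0,1\}^{nk}$ and unfold it into $\b\Pi \in \{0,1\}^{n\times k}$. Reading the same three trace constraints backwards through this correspondence yields the statements `each column of $\b\Pi$ has sum exactly $1$' and `each row of $\b\Pi$ has sum at most $1$', which together imply $\b\Pi^\sT \b\Pi = \id_k$ (since columns become distinct standard basis vectors in $\reals^n$). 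Hence $\b\Pi$ is feasible for \eqref{equation:QAP}, and the Kronecker-trace identity makes the two objectives agree, closing the argument. The main obstacle I anticipate is simply the bookkeeping of the sign ambiguity in $\by$ and of the two different ambient dimensions for $\bfe_i$ (living in $\reals^k$ in the first constraint and in $\reals^n$ in the second); the calculations themselves are routine.
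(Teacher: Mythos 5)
Your proposal is correct and follows essentially the same route as the paper: an objective-preserving correspondence between feasible points of \eqref{equation:QAP} and feasible points of \eqref{equation:tensorformulation} via row-wise vectorization, with the constraints checked block by block in both directions. The only cosmetic differences are that you invoke the Kronecker identity $(\bA_G\otimes\bA_H)\rvec(\b\Pi)=\rvec(\bA_G\b\Pi\bA_H)$ where the paper expands the objective directly in indices, and you treat the sign ambiguity in the rank-one factorization of $\bY$ explicitly, a point the paper passes over silently.
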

Now, we try the following 
SDP relaxation of problem \eqref{equation:tensorformulation} which 
is proposed in \cite{zhao1998semidefinite}
\begin{align}\label{equation:SDP}
    \begin{array}{ll}
    \mbox{maximize}  & \Tr\left((\bA_G\otimes \bA_H)\bY\right) \\
    \mbox{subject to} & \bY \succeq 0 \\
    & 0 \le \bY \le 1 \\
    & \Tr (\bY \J_{nk}) = k^2\\
    & \Tr (\bY (\id_n\otimes(\bfe_i \bfe_i^\sT))) = 1 \;\;\;\;\;\;\;\;\;\; \text{for $i = 1,2,\dots,k$}\\
    & \Tr (\bY ((\bfe_j\bfe_j^\sT)\otimes \id_k)) \leq 1 \;\;\;\;\;\;\;\;\;\; \text{for $j = 1,2,\dots,n$}\\
    \end{array}
\end{align}
The following theorem states an upper bound on the
performance of the hypothesis testing 
method that rejects the null hypothesis if
${\sf SDP}(G;H) \geq 2e(H)$.
\begin{theorem}
\label{thm:convexgeneral}
Let $\{H_n\}_{n\ge 1}$, $\prob_{0,n}$, $\prob_{1,n}$
be as in Theorem \ref{thm:upperbd}.
Consider the hypothesis testing problem in which under null $G_n$ is generated according
to $\prob_{0,n}$
and under alternative it 
is generated according to $\prob_{1,n}$.
Define $\sigma(q_0)$ as per
Eq.~(\ref{eq:sigmaDef}). If
\begin{align*}
\limsup_{n\to\infty}\frac{\lambda_{1}(\bA_{H_n})}{\sqrt{n}} < \frac{1}{4}\sigma(q_0),
\end{align*}
then for the method that rejects the null hypothesis if
${\sf SDP}(G_n;H_n) \geq 2e(H_n)$,
\begin{align*}
\P_{0,n}\{T(G_n)=1\} \to 1
\end{align*}
as $n\to \infty$.
\end{theorem}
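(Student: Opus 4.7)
This is a negative result about the SDP: the aim is to show that under $\prob_{0,n}$, with probability tending to one, the SDP value ${\sf SDP}(G_n;H_n)\ge 2e(H_n)$, so the test always rejects the null and is useless. The strategy is to exhibit an explicit feasible $\bY$ for the relaxation~\eqref{equation:SDP} whose objective $\Tr((\bA_G\otimes\bA_H)\bY)$ already attains the $2e(H_n)$ threshold with high probability. My candidate is a perturbation of the uniform-injection base $\bY_0=\E_\varphi\bigl[\rvec(\b\Pi_\varphi)\rvec(\b\Pi_\varphi)^\sT\bigr]$, where $\varphi:V(H_n)\hookrightarrow[n]$ is a uniformly random injection and $\b\Pi_\varphi$ the associated 0/1 matrix. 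A direct computation gives $(\bY_0)_{(i,j),(i,j)}=1/n$, $(\bY_0)_{(i,j),(i',j')}=1/(n(n-1))$ for $i\ne i'$ and $j\ne j'$, and zero otherwise. To the base I would add a spectral correction
\begin{equation*}
\delta\,\bigl(\bA_G^{q_0}-s\bP_{\bone_n}\bigr)\otimes \bA_H,
\end{equation*}
with $\bA_G^{q_0}$ the shifted adjacency of Eq.~\eqref{equation:encodingadjacency}, $\bP_{\bone_n}=\bone_n\bone_n^\sT/n$, $s=\bone_n^\sT \bA_G^{q_0}\bone_n/n$, and $\delta>0$ a scalar to be tuned.

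\textbf{LP constraints and objective.} Both $\bA_G^{q_0}$ and $\bA_H$ have vanishing diagonal, so the correction contributes nothing to $\bY_{(i,j),(i,j)}$; the column-sum constraint $\sum_i \bY_{(i,j),(i,j)}=1$ is inherited from $\bY_0$, and the row-sum $k/n\le 1$ is immediate from $v(H_n)=o(n)$. The total-sum constraint is preserved because $\bone_n^\sT(\bA_G^{q_0}-s\bP_{\bone_n})\bone_n=0$ by choice of $s$. Using $\Tr((A\otimes B)(C\otimes D))=\Tr(AC)\Tr(BD)$ with the identities $\Tr(\bA_G\bA_G^{q_0})=2e(G)$ (only edges of $G$ contribute), $\Tr(\bA_G\bP_{\bone_n})=2e(G)/n$ and $\Tr(\bA_H^2)=2e(H)$, the total objective equals
\begin{equation*}
\frac{4\,e(G)\,e(H)}{n(n-1)}\,+\,4\delta\, e(G)\, e(H)\,(1-s/n).
\end{equation*}
Under $\prob_{0,n}$, $e(G)=q_0\binom{n}{2}(1+o_P(1))$ and $s=O_P(\sigma(q_0)\sqrt{\log n})$. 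The entrywise non-negativity of $\bY$ is tight on pairs $((i,j),(i',j'))$ with $i\ne i'$, $j\ne j'$, $(\bA_G)_{ii'}=0$ and $(\bA_H)_{jj'}=1$, where the perturbation contributes $-\delta q_0/(1-q_0)$, forcing $\delta\le (1-q_0)/(q_0\,n(n-1))(1+o(1))$. Choosing $\delta$ at this upper limit yields an objective of $2e(H)(1+o_P(1))$; a small tightening (using that the extremal entrywise constraint is saturated only on a $(1-q_0)$-fraction of non-edges, leaving slack one can exploit) absorbs the lower-order fluctuations and gives objective $\ge 2e(H)$ with probability $\to 1$.

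\textbf{Main obstacle (PSD).} The base $\bY_0$ is PSD as an expectation of rank-one matrices; its eigenvalues on the four blocks $\{\bone_n,\bone_n^\perp\}\otimes\{\bone_k,\bone_k^\perp\}$ are $k/n$, $0$, $(n-k)/(n(n-1))$ and $1/(n-1)$ respectively. The zero eigenvalue on the dangerous direction $\bone_n/\sqrt n\otimes\bu$ (any $\bu\perp\bone_k$) is why the projection $s\bP_{\bone_n}$ was subtracted: by construction $(\bA_G^{q_0}-s\bP_{\bone_n})\bone_n=0$, so the quadratic form of the perturbation vanishes identically on these vectors. On the complementary subspace, $\bY_0$ has smallest eigenvalue $(1-o(1))/n$, while the perturbation has operator norm at most
\begin{equation*}
\delta\,\|\bA_G^{q_0}-s\bP_{\bone_n}\|_{\rm op}\,\lambda_1(\bA_H)\;\le\;\frac{1}{\sigma(q_0)^2 n(n-1)}\cdot 2\sigma(q_0)\sqrt n\,(1+o(1))\cdot \lambda_1(\bA_H)\;=\;\frac{2\lambda_1(\bA_H)}{\sigma(q_0)\,n^{3/2}}(1+o(1)),
\end{equation*}
using the standard concentration bound $\|\bA_G^{q_0}\|_{\rm op}\le 2\sigma(q_0)\sqrt n(1+o(1))$ for the centered adjacency of an Erd\H{o}s--R\'enyi random graph (together with $|s|=o(\sqrt n)$, which perturbs this norm only at lower order). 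Under the hypothesis $\lambda_1(\bA_H)<\sigma(q_0)\sqrt n/4$ this operator norm is at most $1/(2n)(1+o(1))$, strictly below the $\sim 1/n$ eigenvalue of $\bY_0$ with enough margin to absorb all lower-order errors, so $\bY\succeq 0$. This PSD verification --the random-matrix concentration for $\bA_G^{q_0}$, the management of the scalar $s$, and the delicate handling of the dangerous direction via the $s\bP_{\bone_n}$ correction-- is the main technical content of the proof, and the constant $1/4$ in the hypothesis is exactly what arises when one balances the smallest non-zero eigenvalue of $\bY_0$ against the operator norm bound on the perturbation.
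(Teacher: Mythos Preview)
Your overall architecture---exhibit a primal feasible $\bY$ that already attains $2e(H_n)$ under the null---is the right one, and your computations of $\bY_0$, its eigenvalues, the LP constraints and the objective are correct. But the PSD verification has a genuine error that breaks the argument.

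You claim ``by construction $(\bA_G^{q_0}-s\bP_{\bone_n})\bone_n=0$''. This is false: what your choice of $s$ gives is only the \emph{scalar} identity $\bone_n^\sT(\bA_G^{q_0}-s\bP_{\bone_n})\bone_n=0$. The vector $\bA_G^{q_0}\bone_n$ has $i$-th entry equal (up to an affine shift) to $\deg_G(i)$, so $\br\equiv\bA_G^{q_0}\bone_n-s\bone_n$ is the centered degree vector of $G_n$, which is nonzero with high probability. Consequently, for $\bu\perp\bone_k$ one has
\[
\big((\bA_G^{q_0}-s\bP_{\bone_n})\otimes\bA_H\big)\,(\bone_n\otimes\bu)=\br\otimes(\bA_H\bu),
\]
which is nonzero as soon as $\bA_H\bu\neq 0$ (this holds for some $\bu\perp\bone_k$ whenever $H_n$ is non-empty). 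The perturbation therefore couples the null space $V_0=\bone_n\otimes\bone_k^\perp$ of $\bY_0$ to $V_0^\perp$. Since the $V_0$--$V_0$ block of $\bY$ is identically zero while the $V_0$--$V_0^\perp$ block is not, $\bY$ cannot be positive semidefinite: taking $\bw=\alpha\,\bone_n\otimes\bu+\br\otimes(\bA_H\bu)/\|\br\otimes(\bA_H\bu)\|$ one gets $\langle\bw,\bY\bw\rangle\to-\infty$ as $\alpha\to-\infty$. So your candidate $\bY$ is infeasible for~\eqref{equation:SDP}, regardless of how small $\delta$ is.

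The fix is exactly what distinguishes the paper's construction from yours: one needs a correction whose first tensor factor \emph{genuinely} annihilates $\bone_n$, not merely in quadratic form. The paper achieves this by bringing in the diagonal degree matrix $\bD_{G_n}$ (equivalently the Laplacian $\bD_{G_n}-\bA_{G_n}$, which satisfies $(\bD_{G_n}-\bA_{G_n})\bone_n=0$ as an honest vector identity), and builds $\bY_n$ out of $\bD_{G_n}\otimes\id_{k_n}$, $\bA_{G_n}\otimes(\bA_{H_n}+\id_{k_n})$ and $\J_{nk_n}$ (with a second case using $(\bD_{G_n}-\bA_{G_n})\otimes\id_{k_n}+\bA_{G_n}\otimes\J_{k_n}$). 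The PSD check then reduces to a Schur-complement argument in which the dangerous cross-terms are controlled by the degree fluctuations $|\deg(i)-\deg(j)|=O(\sqrt{nq_0\log n})$, and the eigenvalue condition on $\bA_{H_n}$ enters when balancing $\lambda_1(\tilde\bA_{H_n})$ against the spectral gap $\lambda_2(\bL_{G_n})$ of the random Laplacian. If you want to salvage your approach, replace $s\bP_{\bone_n}$ by a rank-two correction of the form $(\bone_n\bd^\sT+\bd\bone_n^\sT)/n-(\bone_n^\sT\bd/n)\bP_{\bone_n}$ with $\bd=\bA_G^{q_0}\bone_n$, which does kill $\bone_n$; but then the entrywise and PSD bookkeeping becomes essentially the paper's computation. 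A smaller side issue: $\bA_G^{q_0}$ as defined in Eq.~\eqref{equation:encodingadjacency} does not have vanishing diagonal, so you also need to zero it out explicitly before the entrywise check.
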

%
%
\section{The case of multiple planted subgraphs}
\label{sec:multipleplanted}

In this section we would like to generalize the results given in Section \ref{sec:Statistical} to 
the regime in which $m_n>1$ atypical subgraphs are added to a random graph. Namely, fix 
$H_n$ and let $\varphi_1,\varphi_2,\dots,\varphi_{m_n}\in {\cL}(H_n,n)$ be independent uniformly random 
labelings of $V(H_n)$ in $[n]$. As before, we let $\prob_{0,n}$ be the law of an  Erd\H{o}s-Renyi random graph
with edge probability $q_0$. On the other hand under $\prob_{1,n}$ edges are conditionally independent given 
$\varphi_1,\varphi_2,\dots,\varphi_{m_n}$, with
\begin{eqnarray}
\prob_{1,n}\big((i,j)\in E\big|(\varphi_{l})_{l\le m_n}\big) & = & \begin{cases}
1 & \mbox{ if $(i,j)\in \varphi_l(E(H_n))$ for some $l\in \{1,2,\dots,m_n\}$},\\
q_0 & \mbox{ otherwise.}
\label{equation:alternativemultiple}
\end{cases}
\end{eqnarray}
We would like to find the conditions on $m_n, q_0, \{H_n\}$ under which the two laws $\P_{0,n},\P_{1,n}$ 
are strongly or weakly distinguishable, generalizing Theorem \ref{thm:upperbd} and
Theorem \ref{thm:lowerbdp1} to $m_n>1$.  
The following theorem states the sufficient condition under which the two laws 
are indistinguishable.
\begin{theorem}
\label{thm:lowerbdmultiple}
Let $\{H_n\}_{n\geq 1}$ be a sequence of non-empty graphs and for $q_0 \in (0,1)$ let 
$\P_{0,n}$ be the null model with edge density $q_0$ and $\P_{1,n}$ be the planted model as in
\eqref{equation:alternativemultiple} with parameters $H_n$, $q_0$, $m_n$.
Then the two models are not weakly distinguishable 
if
\begin{align}
&\lim\sup_{n\to \infty} \frac{(5/2)\log v(H_n) + \log m_n}{\log n} < 1,
\end{align}
and
\begin{align}
&\lim\sup_{n\to \infty} \frac{d(H_n)\log (1/q_0) + (5/2)\log v(H_n)}{\log n} < 1.\\
\end{align}
\end{theorem}
The following Theorem states  sufficient conditions under which
the two models are strongly distinguishable. 

\begin{theorem}
\label{thm:upperbdmultiplebdd}
Let $\{H_n\}_{n\geq 1}, q_0\in (0,1), m_n, \P_{0,n},\P_{1,n}$ be as in Theorem \ref{thm:lowerbdmultiple}. 
Then the two laws $\P_{0,n}, \P_{1,n}$ are strongly distinguishable if
\begin{align}
\lim \inf_{n\to \infty}\frac{m_n e(H_n)}{n} = \infty\, ,\label{eq:FirstDistinguishable}
\end{align}
or if
\begin{align}
\liminf_{n\to\infty} \frac{d(H_n)\log (1/q_0)}{\log n}
  >1\, .\label{eq:SecondDistinguishable}
\end{align}
\end{theorem}

\begin{remark}
While the necessary and sufficient conditions in the above theorems do
not match in general, they do match in specific regimes of interest. A first
regime is the one of $m_n$ bounded: in this case we recover the same
asymptotics as for $m_n=1$, cf. Section \ref{sec:Statistical}.

A second regime is obtained when the planted graphs $\{H_n\}$
have bounded size: $v(H_n) \leq C$ for all $n$. Theorem
\ref{thm:lowerbdmultiple} implies that 
the two models are not weakly distinguishable if $m_n \le n^{1-\eps}$
for some $\eps>0$ and all $n$ large enough. 

Vice versa, by  Theorem \ref{thm:upperbdmultiplebdd}, they are strongly
distinguishable if $m_n/n\to\infty$. In other words, we have a
characterization of the distinguishability threshold that is tight up
to sub-polynomial factors.
\end{remark}

\begin{remark}
The expected number of edges under the null model $\prob_{0,n}$ is roughly
$n^2q_0/2$, and its standard deviation  is of order $n$. The
sufficient condition (\ref{eq:FirstDistinguishable}) is therefore
equivalent to requiring that the total number of edges in the planted
graphs is much larger than this standard deviation. 
The proof of Theorem  \ref{thm:upperbdmultiplebdd} constructs a simple
test $T:\cG_n\to \{0,1\}$, by letting $T(G_n)=1$ if $e(G_n)\ge t_*$
and $T(G_n)=0$ otherwise. 

Notice that condition (\ref{eq:SecondDistinguishable}) is instead the
same as in Theorem \ref{thm:upperbd}. In this regime, it is sufficient
to find a single copy of the highest density subgraph of $H_n$, and
the multiplicity $m_n$ does not seem to help.
\end{remark}

\begin{remark}
It is possible to use  the spectral algorithm in subsection \ref{sec:Spectral}
to detect the $m_n$ planted subgraphs by setting$k=v(H_n)m_n$ in Algorithm \ref{algorithm:spectral}.

Note that if $m_nv(H_n)/n^{1/2}\to 0$ then
the $m_n$ planted subgraphs will have disjoint vertex sets with high probability as $n\to \infty$. 
Hence the law $\prob_{1,n}$ studied in this section is the same as the one obtained by planting a graph consisting in the disjoint union of $m_n$
copies of $H_n$.
The top eigenvalue of the graph consisting of $m_n$ disjoint copies of a graph $H_n$ is the same as the 
top eigenvalue of $H_n$. Hence, in this regime, we expect the spectral method to succeed in detecting $m_n$
motifs under the same conditions  under which it succeeds in detecting one motif. 
\end{remark}

\section*{Acknowledgments}
H.J. was supported by the William R. Hewlett Stanford Graduate Fellowship.
A.M. was partially supported by NSF grants CCF-1319979 and DMS-1106627 and the AFOSR grant FA9550-13-1-0036.

\bibliographystyle{amsalpha}
\bibliography{all-bibliography}
\addcontentsline{toc}{section}{References}

\appendix

\section{Examples: Statistical limits}
\label{section:statisticalexamples}
\begin{example}\label{example:FirstClique}
Recall that $K_m$ denotes the complete graph over $m$ vertices (hence
having degree $m-1$). Setting $H_n = K_{k(n)}$ we recover the hidden
clique problem. In this case $d(H_n)= (k(n)-1)/2$. Hence, our theorems
imply that the two laws are strongly distinguishable if
$\liminf_{n\to\infty} k(n)/\log n>2/\log(1/q_0)$, and are not weakly
distinguishable if $\limsup_{n\to\infty} k(n)/\log n<2/\log(1/q_0)$.
\end{example}

\begin{example}
\label{ex:hypercube}
Let $Q_m$ be the hypercube graph over $2^m$ vertices (hence having
degree $m$): this is the graph whose vertices are binary vectors of
length $m$, connected by an edge whenever their Hamming distance is
exactly equal to one.
Set $H_n=Q_{\log_2 k(n)}$. In other words, $H_n$ is an hypercube over
$k(n)$ vertices. It is easy to see that $d(H_n) = (\log_2k(n))/2$.

Let $\gamma(q_0) \equiv 2/\log_2(1/q_0)$.
Theorem \ref{thm:upperbd} implies that this graph can be detected
provided $k(n)\ge n^{\gamma(q_0)+\eps}$ for some $\eps>0$ and all $n$
large enough. On the other hand, Theorem
\ref{thm:lowerbdp1} implies that it cannot be detected if
$k(n)\le n^{2\gamma(q_0)/(2+5\gamma(q_0))-\eps}$  for some $\eps>0$ and all $n$
large enough. Hence, we can see that the lower and upper bounds for distinguishability 
are close for small $q_0$ and as $q_0$ increases the gap between the bounds increases.
\end{example}

\begin{example}
\label{ex:regulartree}
Let $H_n$ be a regular tree with degree $d(n)$ and $r(n)$ generations 
(hence $v(H_n) = 1+d(n)[(d(n)-1)^{r(n)}-1]/(d(n)-2)$, $e(H_n) = v(H_n)-1$). In this case
for any $F_n \subseteq H_n$, $e(F_n) \leq v(F_n) - 1$. Therefore,
$d(H_n) = 1-1/v(H_n) < 1$ and Theorem \ref{thm:upperbd}, cannot guarantee the strong distinguishability 
of the hypotheses.
Furthermore, $\lim\sup_{n\to\infty} d(H_n)/\log v(H_n) = 0$
and we are in the low density region. Hence, Theorem \ref{thm:lowerbdp1} implies that
the null and planted models 
are not weakly distinguishable if $\lim\sup_{n\to\infty} d(n)^{r(n)+1}/n^{1/2} = 0$. 
\end{example}

\begin{example}
\label{ex:cycle}
Let $C_{k}^m$ be the $m$-th power of the cycle over $k$ vertices. This
is the graph with vertex set $\{1,\dots,k\}$, and two vertices $i,j$
are connected if $|i-j-bk|\le m$ for some $b\in \naturals$.  Let $H_n=
C_{k(n)}^{m(n)}$, for two functions $m(n)$, $k(n)$. In this case, 
$v(H_n) = k(n)$ and for all $i \in V(H_n)$, $\deg(i) = 2m(n)$. Therefore, for 
any $F_n \subseteq H_n$, $e(F_n)\leq m(n)v(F_n)$. Since $e(H_n) = m(n)k(n)$,
by definition, $d(H_n) = m(n)$. Using Theorem \ref{thm:upperbd},
two models are strongly distinguishable if $\lim\inf_{n\to\infty}m(n)/\log(n) > \log(1/q_0)$.
In addition, depending on $k(n)$, $m(n)$, we can be in different
graph density regimes.
If $m(n) = \omega(\log k(n))$, the laws 
are not weakly distinguishable if $\lim\sup_{n\to\infty}m(n)/\log(n) < \log(1/q_0)$ and we get 
a tight characterization. If $m(n) = o(\log k(n))$,
two models cannot be weakly distinguished if $\lim\sup_{n\to\infty} k(n)/n^{1/2} = 0$. Finally, for the 
intermediate regime where $m(n) = \Theta(\log k(n))$,
if $\lim\sup_{n\to\infty} ((5/2)\log k(n) + m(n)\log(1/q_0))/\log n < 1$ the models are not weakly distinguishable.
 \end{example}

\section{Examples: Spectral Algorithm}
\label{section:spectralexamples}

\begin{example}
\label{ex:spectralclique}
 Let $H_n = K_{k(n)}$. Assume that, $v(H_n) = k(n)$ is $o(n)$. We have $\lambda_1(\bA_{H_n}) = k(n)-1$ and
 Theorem \ref{thm:SpectralTest} implies that the laws are strongly distinguishable
 using the spectral test
 if $\lim\inf_{n\to\infty}k(n)/n^{1/2}>3\sigma(q_0)$. This shows a
 gap between the performance of the spectral test and the statistical bound of Theorem \ref{thm:upperbd}. 
 
 In order to express results on identifying the hidden subgraph in this case, first note that
 for all $i \in V(H_n)$, $\deg(i) = k(n)-1$ and all nodes of $H_n$ are in the $c$-significant
 set of $H_n$ for $c < 1-1/k(n)$ as per Definition \ref{def:significantset}.
 Assuming that $k(n)\to \infty$ as $n\to \infty$, for any $c>0$ all nodes of $H_n$ are in the $c$-significant
 set of $H_n$ for large enough $n$.
 Also, the leading eigenvector of $\bA_{H_n}$ is $\one_{k(n)}$, its corresponding eigenvalue 
 is $k(n)-1$ and the rest of eigenvalues are $-1$.
 Setting $\eps_0(k(n)) \equiv 1-1/(k(n)-1)$, based
 on definition \ref{def:balancedspectrum}, for each $n$, $H_n$ is $(\eps,1)$ balanced 
 in spectrum for $\eps<\eps_0(k(n))$. Using the fact that for any $i \in V(H_n)$, $H_n\setminus i$
 is $K_{k(n)-1}$, we deduce that for each $n$, $H_n$ is $(\eps,1)$- strictly balanced in 
 spectrum for $\eps < \eps_0(k(n)-1)$ and $\lambda_{-}(H_n) = k(n)-2$. Note that,
 $\eps_0(k(n)-1) \to 1$ as $n\to \infty$.
 Therefore, using Theorem \ref{thm:spectralalgorithm}, if $\lim\inf_{n\to\infty}k(n)/n^{1/2} > 9\sigma(q_0)$,
 Algorithm \ref{algorithm:spectral} can find
 the planted clique with high probability as $n, v(H_n) \to \infty$.
\end{example}

\begin{example}
Set $H_n = Q_{\log_2 k(n)}$ as in Example \ref{ex:hypercube}. Since the hypercube is a regular 
graph, $\lambda_1(\bA_{H_n}) = \log_2k(n)$ and Theorem \ref{thm:SpectralTest} implies that 
two models can be strongly distinguished using the spectral test if $\lim\inf_{n\to\infty}\log_2k(n)/n^{1/2}>3\sigma(q_0)$.
However, this never happens since $k \leq n$. Therefore, Theorem \ref{thm:SpectralTest} cannot guarantee the
strong distinguishability of the hypotheses using the spectral test. Similarly, Theorem \ref{thm:spectralalgorithm}
does not imply the success of Algorithm \ref{algorithm:spectral} in finding the planted hypercube.
\end{example}

\begin{example}
Let $H_n$ be a regular tree with degree $d(n)$ and $r(n)$ generations as in Example \ref{ex:regulartree}.
For a large regular tree, $\lambda_1(\bA_{H_n})$ is of order of $2\sqrt{d(n)-1}$ as $v(H_n)\to\infty$.
Hence, based on Theorem \ref{thm:SpectralTest},
two laws are strongly distinguishable using the spectral test if $\lim\inf_{n\to\infty}d(n)/n>(9/4)\sigma^2(q_0)$.
Therefore, $v(H_n)$ cannot be $o(n)$ and Theorem \ref{thm:SpectralTest} cannot
guarantee the strong distinguishability of two models under any conditions.
Recall that Theorem \ref{thm:upperbd}, also, could not guarantee the strong distinguishability 
for this example under any conditions.

As a side note, note that if 
$q_0$ is known a priori -which is not a practical assumption- and $\lim\inf_{n\to\infty}d(n)/(nq_0) > 0 = c$,
the null and planted models can be distinguished only by looking at the maximum degree
in the graph $G_n$. In fact,
under the null model the maximum degree of graph $G_n$
is less than or equal $nq_0 + \Theta(\sqrt{nq_0\log n})$ with high probability. Therefore, the test that rejects the null
iff the maximum degree of $G_n$ is bigger than or equal $(1+\eps)nq_0$ strongly distinguishes two models under this assumption.
Subsequently, under this condition, the high degree nodes can be used to find the planted tree.

In addition, since $d(n)/v(H_n) \to 0$ as $v(H_n) \to \infty$ for the sequence of regular trees,
Theorem \ref{thm:spectralalgorithm} cannot imply the
success of Algorithm \ref{algorithm:spectral} in finding the planted regular tree.
In other words, Algorithm \ref{algorithm:spectral} fails in identifying the planted 
regular tree because it does not contain
\emph{sufficiently high degree} vertices.
\end{example}

\begin{example}
Set $H_n=C_{k(n)}^{m(n)}$ as in Example \ref{ex:cycle}. As we had for previous examples, 
since $H_n$ is a sequence of regular graphs, $\lambda_1(\bA_{H_n}) = 2m(n)$. Therefore, 
two models are strongly distinguishable using the spectral test if $\lim\inf_{n\to\infty}m(n)/n^{1/2}>(3/2)\sigma(q_0)$ 
and the gap between the results of Theorems \ref{thm:upperbd} and \ref{thm:SpectralTest} is similar to
Example \ref{ex:spectralclique}.

Assuming that $\lim\inf_{n\to\infty} m(n)/k(n) = (c/2) > 0$, Theorem
\ref{thm:spectralalgorithm} can used to guarantee the performance of Algorithm \ref{algorithm:spectral} in  
identifying the hidden subgraph. Under this condition, all vertices of $H_n$ are in the 
$c$-significant set of $H_n$ for large enough $n$. Note that $\bA_{H_n}$ is a circulant matrix, its 
principal eigenvalue is $\lambda_1(\bA(H_n)) = 2m(n)$,
corresponding eigenvector is $\one_{k(n)}$ and other eigenvalues are $\sum_{i=1}^{m(n)}2\cos(2\pi ij/k(n))$
for $j = 1,2,\dots,k(n)-1$. Therefore,
$\lambda_2(\bA(H_n)) \leq 2m(n)-\sum_{i=1}^{m(n)}4\pi^2i^2/(k(n))^2 = 2m(n)(1-2\pi^2m(n)^2/(3k(n)^2)) + o(m(n))$. Hence,
for large enough $n$, $\lambda_2(\bA(H_n)) \leq 2m(n)(1-\eps) = \lambda_1(\bA(H_n))(1-\eps)$ and $H_n$
is $(\eps,1)$-balanced in spectrum, where $\eps = \pi^2c^2/6$. For any $i \in V(H_n)$,
$\lambda_1(\bA(H_n\setminus i)) \geq 2e(H_n\setminus i)/(k(n)-1) = 2m(n)(1-1/(k(n)-1))$. 
Using Cauchy's interlacing
theorem, $\lambda_2(\bA(H_n\setminus i))\leq \lambda_2(\bA(H_n))\leq 2m(n)(1-\eps)\leq\lambda_1(\bA(H_n\setminus i))(1-\eps_0)$
where $(1-\eps)/(1-1/(k(n)-1)) = 1-(\eps-\eps^\prime_n) $, $\eps^\prime_n \to 0$ as $n\to \infty$.
In addition, for large enough $n$, for $\bv$, the leading eigenvector of $\bA(H_n\setminus i)$, we have 
$v_i \geq (1-\delta'_n)/\sqrt{k(n)-1}$ for $i = 1,2,\dots,n-1$ where $\delta^\prime_n\to 0$ as $n\to\infty$.
Therefore, for any $0<\eps^\prime<\eps$, $0<\mu<1$,
$H_n$ is $(\eps - \eps^\prime,\mu)$-strictly balanced in spectrum, for large enough $n$.
In addition, $\lambda_{-}(H_n) \geq 2m(n)(1-1/(k(n)-1))$. Thus, using Theorem \ref{thm:spectralalgorithm},
if $\lim\inf_{n\to\infty} m(n)/n^{1/2}>9\sigma(q_0)/(2\eps)$, 
Algorithm \ref{algorithm:spectral}, can find the planted subgraph with high probability as $n, k(n)\to\infty$.

\end{example}

\section{Proofs: Statistical limits}
\label{sec:ProofStatistical}

We start with the following preliminary lemma.
\begin{lemma}\label{lemma:likelihoodtest}
Let, for each $n$, $Z:\mathcal{G}_n\to \reals_+$ be such that, 
\begin{align*}
Z(G_n) = \E_{0,n}\{Z(G_n)\}\frac{\de \prob_{1,n}}{\de\prob_{0,n}}(G_n)\, .
\end{align*} 
Further let $Z_n = Z(G_n)$.
Then, $\prob_{0,n}$ and $\prob_{1,n}$ are strongly distinguishable if and
only if, under $\prob_{0,n}$,
\begin{align*}
\frac{Z_n}{\E_{0,n}Z_n}\toprob 0\, .
\end{align*}
They are not weakly distinguishable if and only if, along some
subsequence $\{n_k\}$,
\begin{align*}
\frac{Z_n}{\E_{0,n}Z_n}\toprob 1\, .
\end{align*}
\end{lemma}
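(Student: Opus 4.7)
My starting point is to recognize that the defining relation for $Z$ is just a normalization, so that $L_n := Z_n/\E_{0,n} Z_n = (\mathrm{d}\prob_{1,n}/\mathrm{d}\prob_{0,n})(G_n)$ is the likelihood ratio evaluated at the random graph. In the setting of this paper, $q_0\in(0,1)$ makes the null density positive on every graph, hence $\prob_{1,n}\ll\prob_{0,n}$, so $L_n\ge 0$ and crucially $\E_{0,n} L_n = 1$. Combined with the observation (stated just after the definitions in Section~\ref{section:problem}) that strong and weak distinguishability correspond respectively to $\|\prob_{0,n}-\prob_{1,n}\|_{\sTV}\to 1$ and $\liminf_n\|\prob_{0,n}-\prob_{1,n}\|_{\sTV}>0$, the lemma reduces to the identity
\begin{align*}
\|\prob_{0,n}-\prob_{1,n}\|_{\sTV} \;=\; \E_{0,n}\big[(1-L_n)_+\big] \;=\; 1 - \E_{0,n}\big[\min(1,L_n)\big]\, ,
\end{align*}
obtained by choosing $A = \{L_n<1\}$ as the optimal set in the variational definition of total variation and using $\E_{0,n} L_n = 1$.

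For the strong distinguishability equivalence, the criterion becomes $\E_{0,n}[\min(1,L_n)]\to 0$. Since $0\le\min(1,L_n)\le 1$, bounded convergence gives equivalence between $L^1(\prob_{0,n})$ and in-probability convergence to $0$; moreover, $\min(1,L_n)\toprob 0$ is clearly equivalent to $L_n\toprob 0$, which yields the claim.

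For the weak distinguishability equivalence, failure amounts to a subsequence on which $\E_{0,n}[(1-L_n)_+]\to 0$. The easy implication is that $L_n\toprob 1$ forces $\min(1,L_n)\toprob 1$, and bounded convergence then forces $\E_{0,n}[\min(1,L_n)]\to 1$, i.e.\ TV vanishes along the subsequence. The only nontrivial step is the converse: starting from $\E_{0,n}[(1-L_n)_+]\to 0$, use the identity
\begin{align*}
\E_{0,n}\big[(L_n-1)_+\big] - \E_{0,n}\big[(1-L_n)_+\big] \;=\; \E_{0,n} L_n - 1 \;=\; 0
\end{align*}
to conclude $\E_{0,n}[(L_n-1)_+]\to 0$ on the same subsequence, and combine the two one-sided tail bounds into $\E_{0,n}|L_n-1|\to 0$, whence $L_n\toprob 1$. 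The main (mild) subtlety lies precisely here: vanishing of $(1-L_n)_+$ alone would not prevent $L_n$ from exploding on an event of small $\prob_{0,n}$-probability, and it is the mean constraint $\E_{0,n} L_n = 1$ that rules this out.
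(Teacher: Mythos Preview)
Your proof is correct and is in fact more complete than the paper's own argument. The paper's appendix proves only two of the four implications: it shows that $L_n\toprob 1$ along a subsequence implies failure of weak distinguishability (by lower-bounding the risk $\gamma_n(T)=\int\big((1-T)L_n+T\big)\,\mathrm{d}\prob_{0,n}$ via the optimal test $T=\one\{L_n>1\}$), and that $L_n\toprob 0$ implies strong distinguishability (again by exhibiting the likelihood ratio test). The converse directions are not addressed there.

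Your route is genuinely different: rather than working directly with the Bayes risk and an explicit test, you pass through the identity $\|\prob_{0,n}-\prob_{1,n}\|_{\sTV}=\E_{0,n}[(1-L_n)_+]=1-\E_{0,n}[\min(1,L_n)]$ and the characterizations of strong/weak distinguishability in terms of total variation stated right after the definition in Section~\ref{section:problem}. Bounded convergence and Markov's inequality then give the equivalence with $L_n\toprob 0$ for the strong case. For the weak case, the key extra ingredient you supply---and which the paper does not spell out---is the use of the mean-one constraint $\E_{0,n}L_n=1$ to deduce $\E_{0,n}[(L_n-1)_+]=\E_{0,n}[(1-L_n)_+]\to 0$, so that $\E_{0,n}|L_n-1|\to 0$ and hence $L_n\toprob 1$. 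This is exactly what is needed to close the ``only if'' direction, and your remark that vanishing of $(1-L_n)_+$ alone would not preclude $L_n$ from blowing up on a small set is the right intuition for why the mean constraint is essential. In short, the paper's approach is a direct verification of the two implications it actually uses later, while yours is a cleaner structural argument that delivers the full equivalence claimed in the lemma.
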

\begin{proof}
First assume that along some subsequence $\{n_k\}$ under $\P_{0,n}$
\begin{align}
\label{assumption1}
\frac{Z_{n}}{\E_0Z_n} \to 1,
\end{align}
in probability. For a test $T: \mathcal{G}_n \to \{0,1\}$, define the risk $\gamma(T)$ as
\begin{align*}
\gamma_{n}(T) = \prob_{0,n}\big(T(G_n) = 1\big) +\prob_{1,n}\big(T(G_n) = 0\big).
\end{align*}
Now, for any test $T$ we have
\begin{align*}
\gamma_n(T) &= \int(1-T)\d\P_{1,n} + \int T\d\P_{0,n}\\
&=\int \left((1-T)\frac{Z_n}{\E_0Z_n} + T \right) \d\P_{0,n}\\
&\ge \int \left(\left(1-\one\left\{\frac{Z_n}{\E_0Z_n}>1\right\}\right)\frac{Z_n}{\E_0Z_n} + \one\left\{\frac{Z_n}{\E_0Z_n}>1\right\}\right)\d\P_{0,n}.
\end{align*}
Using \eqref{assumption1}, the last term goes to $1$ as $n\to \infty$.
Therefore along $\{n_k\}$
\begin{align*}
\liminf_{n\to \infty}\{\inf_{T}\{\gamma_n(T)\}\} \ge 1.
\end{align*}
Which implies that for all tests $T$,
\begin{align*}
\limsup_{n\to \infty}\Big[\prob_{0,n}\big(T(G_n) = 1\big) +\prob_{1,n}\big(T(G_n) = 0\big)\Big] = 1.
\end{align*}
Thus, $\P_{0,n}$, $\P_{1,n}$ are not weakly distinguishable.
\par
Now, assume that
\begin{align}
\label{assumption2}
\frac{Z_{n}}{\E_0Z_n} \to 0.
\end{align}
As above, It is easy to see that the test $T = \one\left\{Z_n/\E_0Z_n>1\right\}$ satisfies
\begin{align*}
\lim\sup_{n\to\infty}\prob_{0,n}\big(T(G_n) = 1\big) =
\lim\sup_{n\to\infty}\prob_{1,n}\big(T(G_n) = 0\big) = 0\, .
\end{align*}
Therefore in this case $\P_{0,n}$, $\P_{1,n}$ are strongly distinguishable.
\end{proof}
In order to state the proof our results,
given a graph $G_n\in\cG_n$, we define $U_{G_n}: \cL(H_n;n)\to \naturals$
by
\begin{align*}
U_{G_n}(\varphi) \equiv \big|\varphi(E(H_n))\cap E(G_n)\big|\, .
\end{align*}
For $n\ge v(H_n)$, we let
\begin{align*}
N(H_n;G_n) \equiv \Big|\big\{\,\varphi\in\cL(H_n;n):\;\; U_{G_n}(\varphi) = e(H_n)\,
\big\}\Big| \, .
\end{align*}
Let $\P_{0,n}, \P_{1,n}$ be defined as in Section \ref{sec:intro}, note that
\begin{align*}
\E_{0,n} N(H_n;G_n) = (n)_{v(H_n)}q_0^{e(H_n)}.
\end{align*}
Further, we can write
\begin{align*}
\frac{\de\prob_{1,n}}{\de\prob_{0,n}}(G_n) = \frac{1}{(n)_{v(H_n)}}\displaystyle\sum_{\varphi\in \cL(H_n;n)}
\displaystyle\prod_{(i,j)\in E(H_n)}
\left[\left (\frac{1}{q_0}\right )
\mathbb I\{\left(\varphi(i),\varphi(j)\right)\in E(G_n)\}\right].
\end{align*}
Thus,
\begin{align*}
\frac{\de\prob_{1,n}}{\de\prob_{0,n}}(G_n) = 
\frac{1}{(n)_{v(H_n)}}\left (\frac{1}{q_0}\right ) ^{e(H_n)}
\displaystyle\sum_{\varphi\in \cL(H_n;n)} \mathbb I\left\{|\varphi(E(H_n)) \cap E(G_n)| = |E(H_n)|\right\}.
\end{align*}
Therefore,
\begin{align}
\label{eq:likelihood}
\frac{\de\prob_{1,n}}{\de\prob_{0,n}}(G_n) = \frac{1}{(n)_{v(H_n)}}\left (\frac{1}{q_0}\right ) ^{e(H_n)}N(H_n;G_n)
= \frac{1}{\E_{n,0}\{N(H_n;G_n)\}}N(H_n;G_n).
\end{align}
Now we can prove Theorems \ref{thm:upperbd}, \ref{thm:lowerbdp1}. 
%
%
\subsection{Proof of Theorem \ref{thm:upperbd}}
\begin{proof}
Let $\tilde H_n$ be a subgraph of $H_n$ that satisfies $d(H_n) = e(\tilde H_n)/v(\tilde H_n)$.
Using \eqref{eq:likelihood}, we can write
\begin{align*}
\P_{0,n}\left(\frac{\de\P_{1,n}}{\de\P_{0,n}}(G_n) > 0\right) &= \P_{0,n}\left(N(H_n;G_n) > 0\right) \\
&\leq \P_{0,n}(N(\tilde H_n;G_n) > 0)\\
&\leq \E_{0,n}N(\tilde H_n;G_n) \\
&\leq n^{v(\tilde H_n)}q_0^{e(\tilde H_n)} \\
&= \exp\left\{v(\tilde H_n)\log n\left(1-\frac{d(H_n)\log(1/q_0)}{\log n}\right)\right\}
\end{align*}
which goes to zero as $n\to \infty$ when \eqref{eq:upperbdcond} holds. Therefore, under the assumptions
of Theorem \ref{thm:upperbd}, under $\P_{0,n}$,
\begin{align*}
\frac{\de\P_{1,n}}{\de\P_{0,n}}(G_n) \toprob 0
\end{align*}
and using Lemma \ref{lemma:likelihoodtest} the proof is complete.
\end{proof}

\subsection{Proof of Theorem \ref{thm:lowerbdp1}}

We start by stating the following lemma.
\begin{lemma}\label{lemma:Nconcentrationp1}
Let $\{H_n\}_{n\ge1}, q_0, \P_{0,n}, \P_{1,n}$ be as in Theorem \ref{thm:upperbd}.
Under the assumptions of Theorem \ref{thm:lowerbdp1}; for all $\eps>0$
\begin{align}\label{Nconcentrationp1}
\lim_{n\to \infty} \P_{0,n}\left\{N(H_n;G_n) \le (1-\eps)\E_0N(H_n;G_n)\right\} = 0.
\end{align}
\end{lemma}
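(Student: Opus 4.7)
The natural route is the second moment method. Since $\prob_{0,n}(N \le (1-\eps)\E_{0,n} N) \le \Var_{0,n}(N)/[\eps^2 (\E_{0,n} N)^2]$ by Chebyshev, it suffices to prove $\Var_{0,n}(N(H_n;G_n)) = o\bigl((\E_{0,n}N(H_n;G_n))^2\bigr)$ under each of the three regimes. Writing $N = \sum_{\varphi \in \tcL(H_n,n)} X_\varphi$ with $X_\varphi = \ind\{U_{G_n}(\varphi) = e(H_n)\}$, the i.i.d.\ Bernoulli$(q_0)$ structure of the edges under $\prob_{0,n}$ gives
\begin{align*}
\E_{0,n}[X_\varphi X_{\varphi'}] = q_0^{2e(H_n) - e_\cap(\varphi,\varphi')}, \qquad e_\cap(\varphi,\varphi') := |\varphi(E(H_n)) \cap \varphi'(E(H_n))|,
\end{align*}
so that the covariance contribution of a pair vanishes when $e_\cap = 0$ and is at most $q_0^{2e(H_n) - e_\cap}$ otherwise.

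The next step is to regroup the pairs $(\varphi,\varphi')$ by their \emph{overlap subgraph}. For each bijection $\pi$ between two subsets $A, A' \subseteq V(H_n)$ of common size $s$, let $F = F(\pi)$ be the subgraph on $A$ whose edges are the edges of $H_n[A]$ whose $\pi$-image lies in $E(H_n[A'])$. Writing $k = v(H_n)$, the number of pairs $(\varphi,\varphi')$ realizing a given $F$ with $v(F)=s$, $e(F)=m$ is bounded by $(n)_k (n-k)_{k-s}$, and the number of such $F$ is bounded by the crude count $\binom{k}{s}^2 s! \cdot \binom{e(H_n)}{m}$ (or a tighter graph-theoretic estimate). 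Normalizing by $(\E_{0,n} N)^2 = [(n)_k q_0^{e(H_n)}]^2$ leaves a ratio bounded by $1 + o(1) + \Sigma$ with
\begin{align*}
\Sigma \;\le\; \sum_{s \ge 1}\; \sum_{m \ge 0} \; M(s,m)\, \frac{q_0^{-m}}{n^{s}(1-o(1))},
\end{align*}
where $M(s,m)$ counts the relevant overlap patterns. The goal is to show $\Sigma \to 0$ in every regime.

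The final step is to verify this regime by regime, using the structural identity $e(F)/v(F) \le d(H_n)$ for every non-empty $F \subseteq H_n$, hence $q_0^{-m} \le n^{s\, d(H_n)\log(1/q_0)/\log n}$. In the \textbf{high density} regime, the hypothesis $d(H_n) < (1-\delta)\log n/\log(1/q_0)$ gives $n^s q_0^{-m} \ge n^{\delta s}$ for every overlap with $m \ge 1$, so the edge-overlap contributions are geometric in $s$; for pure vertex overlaps ($m=0$) the term $k^{2s}/n^s$ decays because $d(H_n) = \omega(\log k)$ combined with the density bound forces $k = n^{o(1)}$. In the \textbf{low density} regime, $d(H_n) = o(\log k)$ implies $e(F) \le v(F)$ for every subgraph (up to $o(s)$ corrections), reducing $\Sigma$ to a geometric series in $k^2/(n q_0)$, which tends to $0$ by the assumption $k = o(n^{1/2})$ and $q_0 > 0$ fixed. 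In the \textbf{medium density} regime one needs to combine both handles: bounding $\Sigma$ by balancing $k^{2s} q_0^{-m}/n^s$ against the most constraining subgraphs ($m/s$ close to $d(H_n) = \Theta(\log k)$), the assumption $k = o(n^{2/5})$ together with $d(H_n)\log(1/q_0) < \log n$ is exactly what makes the sum converge.

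The main obstacle will be the third step for the medium density regime: because the number of subgraphs $F \subseteq H_n$ with a given $(s,m)$ profile grows combinatorially, the crude bound $M(s,m) \le \binom{k}{s}^2 s! \binom{e(H_n)}{m}$ cannot be afforded simultaneously for all $s$, and one must refine it using the fact that overlaps of $H_n$ with itself along $F$ are constrained by $d(H_n)$. The specific exponent $2/5$ arises from optimizing the multiplicity bound against $n^{-s} q_0^{-m}$ at the $(s,m)$ that saturates $m = s\cdot d(H_n)$, and showing that the maximum of the resulting expression over $s \in \{1,\dots,k\}$ vanishes precisely when $k = o(n^{2/5})$.
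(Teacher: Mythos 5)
Your overall frame is the right one and matches the paper's in substance: the paper also reduces the lemma to showing $\E_{0,n}N(H_n;G_n)\to\infty$ together with $\bar\Delta/(\E_{0,n}N)^2\to 0$, where $\bar\Delta$ is the sum of $\E_0[X_{\varphi_1}X_{\varphi_2}]$ over overlapping pairs (the paper then invokes Janson's inequality rather than Chebyshev, but for this lemma that difference is immaterial). The genuine gap is in the step you yourself flag as ``the main obstacle'' and then leave unresolved: your stratification of overlapping pairs by the profile $(s,m)$ with the multiplicity bound $M(s,m)\le\binom{k}{s}^2 s!\,\binom{e(H_n)}{m}$ is not a difficulty to be refined away --- it is an artifact of a suboptimal decomposition. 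The paper never sums over the edge-overlap $m$ at all: for a pair of labelings whose images share exactly $u$ vertices it bounds the edge overlap by the worst case, $e_\cap\le e_{H_n}(u)\le u\,d(H_n)$ (where $e_{H_n}(u)$ is the maximal number of edges of a subgraph of $H_n$ on $\lceil u\rceil$ vertices), so the only combinatorial factor is the number of labeling pairs with vertex overlap $u$, namely $(n)_{2k-u}(k!)^2/\big(u!((k-u)!)^2\big)$, which is then controlled by Stirling bounds. With that device the medium-density case --- exactly the case you leave open, and the one from which the $n^{2/5}$ threshold actually emerges --- reduces to checking that $\log n-\frac{e_{H_n}(u)}{u}\log(1/q_0)-\tfrac{5}{2}\log k_n$ stays bounded away from $-\infty$ uniformly in $u\ge 2$ (the $\tfrac52$ coming from the $2\log k_n$ per overlapped vertex plus the union over overlap sizes charged as $\frac{1}{u}\log k_n\le\tfrac12\log k_n$), which is precisely where $k_n=o(n^{2/5})$ and $d(H_n)\log(1/q_0)<(1-\delta)\log n$ enter. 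Without this (or an equivalent) replacement for your $M(s,m)$ bound, your proof of the hardest regime does not exist.

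Two further points. First, your low-density claim that $d(H_n)=o(\log k)$ ``implies $e(F)\le v(F)$ for every subgraph up to $o(s)$ corrections'' is false as stated ($d(H_n)$ may still diverge, e.g.\ like $\sqrt{\log k}$); what you can and should say is $q_0^{-e(F)}\le q_0^{-v(F)d(H_n)}=k^{o(v(F))}$, and even then the interplay between this factor and $k^2/n$ requires the more careful bookkeeping the paper performs (its argument via $u^*$ and $\tilde u$, splitting off the range of $u$ beyond the minimizer), not a bare geometric series in $k^2/(nq_0)$. Second, the Chebyshev route needs the diagonal term, i.e.\ $\E_{0,n}N(H_n;G_n)\to\infty$; this is not automatic and is where the hypothesis $\limsup_n d(H_n)/\log n<1/\log(1/q_0)$ gets used (the paper proves it explicitly), so it must appear in your write-up rather than be absorbed into a ``$1+o(1)$''.
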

\begin{proof}
Let $v(H_n) = k_n, e(H_n) = e_n$. Let
\begin{align*}
X_{\varphi}(G_n) = \begin{cases}
1 & \mbox{ if $ \left |\varphi(E(H_n))\cap E(G_n)\right | = e_n$,}\\
0 & \mbox{ otherwise.}
\end{cases}
\end{align*}
Note that $N(H_n;G_n) = \displaystyle\sum_{\varphi\in\cL(H_n;n)}X_{\varphi}(G_n)$. 
We have
\begin{align*}
\E_0X_\varphi = p_1(e_n) \equiv q_0^{e_n}.\\
\end{align*}
We write $\varphi_1 \sim \varphi_2$, if $\left|\varphi_1(V(H_n))\cap\varphi_2(V(H_n))\right| \ge 2$. 
We define $e_G(m) = \max_{H \subseteq G, v(H) = \lceil m \rceil} {e(H)}$. Therefore, if 
$\left|\varphi_1(V(H_n))\cap\varphi_2(V(H_n))\right| = u$, we have 
\begin{align*}
\E_0X_{\varphi_1}X_{\varphi_2} \leq p_2(u,e_n) \equiv q_0^{2e_n-e_{H_n}(u)}.
\end{align*}
Define
\begin{align}
\label{eq:deltadefinition}
\bar\Delta(n,H_n) = \sum_{\varphi_1\sim\varphi_2}  \E_0X_{\varphi_1}X_{\varphi_2}.
\end{align}
Therefore, we have
\begin{align*}
\bar \Delta(n,H_n) \le \displaystyle\sum_{u=2}^{k_n} \frac{(n)_{2k_n-u}(k_n!)^2}{u!\left((k_n-u)!\right)^2}p_2(u,e_n).
\end{align*}
 Now using the fact that for all $n$
 \begin{align*}
 \sqrt{2\pi}n^{n+1/2}e^{-n} \le n! \le n^{n+1/2}e^{-n+1},
 \end{align*}
 we get
 \begin{align*}
\frac{\bar\Delta(n,H_n)}{(\E_0 N(H_n;G_n))^2} \le \sum_{u=2}^{k_n} g(u).
 \end{align*}
 Where, 
 \begin{align}
  \label{eq:varoveraverage}
  g(u) = \begin{cases}
 \frac{(2\pi)^{-3/2}e^{-2k_n+2}k_n^{2k_n+1}}
 {(n-u)^u(k_n-u)^{2(k_n-u)+1}e^{-2(k_n-u)-u}u^{u+1/2}}q_{0}^{-e_{H_n}(u)} & \mbox {if $u\leq k_n-1$,} \\
 \frac{k_n^{k_n+1/2}e^{-k_n+1}(n-k_n)^{n-k_n+1/2}e^{-n+k_n+1}}{(2\pi)^{1/2}n^{n+1/2}e^{-n}} q_0^{-e_n} & \mbox {if $u = k_n$}.
 \end{cases}
 \end{align}
 Now using Chebyshev's inequality, for all $\eps > 0$
 \begin{align}
 \label{eq:chebyshev}
 \P_{0,n}\left\{N(H_n;G_n) \le (1-\eps)\E_0N(H_n;G_n)\right\} &\le \frac{\E_0\left(N(H_n; G_n)^2\right)-(\E_0N(H_n; G_n))^2}{\eps^2(\E_0N(H_n; G_n))^2}\nonumber\\
 &= \frac{\sum_{\varphi_1,\varphi_2}\left(\E_0X_{\varphi_1}X_{\varphi_2}-\E_0X_{\varphi_1}\E_0X_{\varphi_2}\right)}{\eps^2(\E_0N(H_n; G_n))^2}\nonumber\\
 &= \frac{\sum_{\varphi_1\sim\varphi_2}\left(\E_0X_{\varphi_1}X_{\varphi_2}-\E_0X_{\varphi_1}\E_0X_{\varphi_2}\right)}{\eps^2(\E_0N(H_n; G_n))^2}\nonumber\\
 &\leq \frac{\sum_{\varphi_1\sim\varphi_2}\E_0X_{\varphi_1}X_{\varphi_2}}{\eps^2(\E_0N(H_n; G_n))^2}=\frac{\bar\Delta(n,H_n)}{\eps^2(\E_0 N(H_n;G_n))^2}.
 \end{align}
Hence, in order to complete the proof it suffices to show that 
 \begin{align}
\frac{\bar\Delta(n,H_n)}{(\E_0 N(H_n;G_n))^2} &\leq \sum_{u=2}^{k_n} g(u)\to 0
\end{align}
as $n\to \infty$.
 Note that for $g(u)$ defined as in \eqref{eq:varoveraverage}, if $u<k_n$
 \begin{align*}
 -\frac{1}{u}\log(g(u)) &\geq \log n + \log(1-(u/n))  - \frac{e_{H_n}(u)}{u}\log(1/q_0)\\
 &\;\;\;\;+ 2(k_n/u)-2((k_n/u)-1)-1-(2(k_n/u)+(1/u))\log k_n\\
 &\;\;\;\;+(2(k_n/u)-2+(1/u))\log(k_n-u)+(1+\frac{1}{2u})\log u\\
 &\geq\log{n}-\frac{e_{H_n}(u)}{u}\log(1/q_0)-2\log{k_n}+\log{u} + \frac{1}{2u}\log{u}\\
 &\;\;\;\;+\big(-2+\frac{1}{u}+\frac{2k_n}{u}\big)\log{\big(1-\frac{u}{k_n}\big)}.
 \end{align*}
 In addition,
  \begin{align*}
 -\frac{1}{k_n}\log(g(k_n)) &\geq (n/k_n+1/(2k_n))\log n- \frac{e_n}{k_n}\log(1/q_0)\\
 &\;\;\;\;-(1+1/(2k_n)) \log k_n - (n/k_n-1+1/(2k_n))\log(n-k_n)+\frac{C}{k_n}\\
 &=\log{n}-\frac{e_n}{k_n}\log(1/q_0)-\log{k_n}-\frac{1}{2k_n}\log{k_n} \\
 &\;\;\;\;-\big(\frac{n}{k_n}-1+\frac{1}{2k_n}\big)\log(1-\frac{k_n}{n})+\frac{C}{k_n}.
 \end{align*}
 Letting
 \begin{align}
 \label{eq:fofu}
 f(u) =  \log{n}-\frac{e_{H_n}(u)}{u}\log(1/q_0)-2\log{k_n}+\log{u} + \frac{1}{2u}\log{u}+\big(-2+\frac{2k_n+1}{u}\big)\log{\big(1-\frac{u}{k_n}\big)},
 \end{align}
 for $2\leq u\leq k_n-1$, and 
 \begin{align}
 \label{eq:fofk}
 f(k_n) =  \log{n}-\frac{e_{H_n}(u)}{u}\log(1/q_0)-\log{k_n} - \frac{1}{2k_n}\log{k_n} -\big(\frac{n}{k_n}-1+\frac{1}{2k_n}\big)\log(1-\frac{k_n}{n}),
 \end{align}
Therefore, it suffices to show that 
 \begin{align}
 \begin{split}
 \label{eq:deltaovern}
\frac{\bar\Delta(n,H_n)}{(\E_0 N(H_n;G_n))^2} &\leq \sum_{u=2}^{k_n} g(u)\\
&\leq C\sum_{u=2}^{k_n} \exp\{-uf(u)\}\\
&\leq Ck_n\exp \left\{-\tilde uf(\tilde u)\right\}\\
&= C\exp\left\{-\tilde u\left(f(\tilde u)-\frac{\log k_n}{\tilde u}\right)\right\} \to 0
\end{split}
 \end{align}
 as $n\to \infty$, where $\tilde u = \arg\min_{2\le u\le k_n}\{uf(u)\}$.
 First note that $((2/x)-2)\log(1-x)\geq -2$, for $0\leq x < 1$. Hence,
 \begin{align*}
 \big(-2+\frac{2k_n}{u}\big)\log{\big(1-\frac{u}{k_n}\big)} \geq -2,
 \end{align*}
 for $2\leq u\leq k_n-1$. Further, since $x\log(1-1/x)$ is increasing for $x>1$, for $2\leq u\leq k_n-1$,
 \begin{align*}
 \frac{1}{u}\log(1-u/k_n) \geq \frac{1}{k_n-1}\log(1-(k_n-1)/k_n) \geq -1,
 \end{align*}
 for large enough $k_n$. In addition, $\log u + (1/(2u))\log u \geq 0$, for $u\geq 1$. Hence, the sum of last three terms in \eqref{eq:fofu} is bounded below.
 Finally, note that 
 \begin{align*}
 &\frac{n}{k_n} -1+\frac{1}{2k_n} \geq 0, \\
 &- \frac{\log k_n}{2k_n} \geq -1,
 \end{align*} 
 for large enough $n$. Thus, the last two terms in \eqref{eq:fofk} are also bounded below.
 Therefore, for $2\leq u \leq k_n$,
 \begin{align}
 \label{eq:fofuminuslogkoveru}
 f(u) - \frac{\log k_n}{u} \geq \log{n}-\frac{e_{H_n}(u)}{u}\log(1/q_0)-(5/2)\log{k_n} + C,
 \end{align}
 for some constant $C$. Hence,
 \begin{equation*}
 \frac{\bar\Delta(n,H_n)}{(\E_0 N(H_n;G_n))^2} \to 0
 \end{equation*}
 as $n\to \infty$ if 
 \begin{align*}
 &\limsup_{n\to\infty} \frac{d(H_n)\log(1/q_0) + (5/2)\log v(H_n)}{\log n}
<1.
 \end{align*}
 This shows 
 that the lemma 
 holds under the assumption of
 Theorem \ref{thm:lowerbdp1}. Now, let $u^* = \displaystyle\arg\min_{2\le u\le k_n} f(u)$. 
 Note that as we had above,
 \begin{align*}
  f(u)  \geq \log{n}-\frac{e_{H_n}(u)}{u}\log(1/q_0)-2\log{k_n} + C.
  \end{align*}
 Hence, under the assumptions of Theorem \ref{thm:lowerbdp1}, $f(u^*) \to \infty$.
 Define
 \begin{align*}
 f^{*}(u) = \begin{cases}
f(u) & \mbox{if, $u \le u^*$,}\\
f(u^*) & \mbox{otherwise}.
\end{cases}
\end{align*}
We have
\begin{align*}
\sum_{u=2}^{k_n}\exp \left\{-uf(u)\right\} &\le \sum_{u=2}^{k_n}\exp \left\{-uf^{*}(u)\right\}\\
&=\sum_{u=2}^{u^{*}}\exp \left\{-uf(u)\right\} + C\exp\{-u^*f(u^*)\}\\
&\le u^*\exp\{-\tilde u f(\tilde u)\} + C\exp\{-u^*f(u^*)\}.
\end{align*}
Where $C = \left(1-e^{-f(u^*)}\right)^{-1}$ is a constant.  
 Therefore, it suffices that 
 \begin{align*}
 u^*\exp\{-\tilde u f(\tilde u)\} + C\exp\{-u^*f(u^*)\}\to 0
 \end{align*}
 as $n \to \infty$.
 This holds when
 \begin{align}
 \label{eq:conditionsonf}
 &u^*f(u^*) \to +\infty,\nonumber\\
 &u^*\exp\{-\tilde u f(\tilde u)\} \to 0.
 \end{align}
 as $n \to \infty$. Note that the first condition above holds since
 under the assumptions of Theorem \ref{thm:lowerbdp1}, $f(u^*)\to\infty$ as $n\to \infty$.
 Further,
 \begin{equation*} 
 \log \left(u^*\exp\{-\tilde u f(\tilde u)\}\right) = \log u^* - \tilde u f(\tilde u).
 \end{equation*}
 Note that if $d(H_n) = o(\log v(H_n)$, then $\limsup_{n\to\infty}(\log u^*)/(\tilde u\log n) = 0$. 
 Thus, \eqref{eq:conditionsonf}
 holds when $n\to \infty$. Therefore, if $d(H_n) = o(\log v(H_n)$ the lemma holds if
  \begin{align}
\label{eq:weakercondition2}
 \limsup_{n\to\infty} \frac{v(H_n)}{n^{1/2}} = 0
 \end{align}
and this completes the proof.
\end{proof}
Now, we can prove Theorem \ref{thm:lowerbdp1}.
\begin{proof}[Proof of Theorem \ref{thm:lowerbdp1}]
using Lemma \ref{lemma:Nconcentrationp1}, under the assumptions of Theorem \ref{thm:lowerbdp1}, for 
all $\eps>0$
\begin{align*}
\lim_{n\to \infty} \P_{0,n}\left\{\frac{1}{\E_0N(H_n;G_n)}N(H_n;G_n) \le (1-\eps)\right\} = 0.
\end{align*}
Therefore, by taking $Z_n = N(H_n;G_n)/\E_0 N(H_n;G_n)$, so that $\E_0 Z_n=1$ we have
\begin{align*}
\E_0[|1 -ˆ' Z_n|] = 2\E_0[(1 -ˆ' Z_n)_+] \to 0
\end{align*}
as $n\to \infty$. 
Therefore, under $\P_{0,n}$
\begin{align*}
\frac{1}{\E_0N(H_n;G_n)}N(H_n;G_n) \toprob 1
\end{align*}
and using \eqref{eq:likelihood} and Lemma \ref{lemma:likelihoodtest}, Theorem \ref{thm:lowerbdp1} is proved.
\end{proof}
%
%
\section{Proofs: spectral algorithm}
We start by stating the following useful theorems from random matrix theory.
\begin{theorem}[\cite{tao2012topics}, Corollary 2.3.6]
\label{lemma:rmt}
Let $\bX \in \reals^{n\times n}$
be a random symmetric matrix whose entries $X_{ij}$ are independent,
zero-mean, uniformly bounded random variables for $j \geq i$ and $X_{ij} = X_{ji}$ for $j < i$.
There exists constants $c_1,c_2 >0$ such that for all $t \geq c_1$
\begin{align*}
\P\left\{\|\bX\|_2 > t\sqrt{n}\right\} \leq c_1 \exp\left(-c_2tn\right).
\end{align*} 
\end{theorem}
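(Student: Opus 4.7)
The plan is to prove this standard spectral-norm tail bound via the moment (trace) method, which is the classical approach due to Wigner, refined by F\"uredi and Koml\'os, and presented in Tao's book from which the statement is quoted. The main input is uniform boundedness of the entries (say $|X_{ij}|\le M$ almost surely) and independence of the upper-triangular entries, together with $\E[X_{ij}]=0$.

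First I would use the key inequality $\|\bX\|_2^{2k}\le \Tr(\bX^{2k})$, valid for any positive integer $k$ since $\bX$ is symmetric. Combined with Markov's inequality applied to the nonnegative random variable $\Tr(\bX^{2k})$, this yields
\begin{align*}
\prob\bigl\{\|\bX\|_2>t\sqrt{n}\bigr\}\le \frac{\E\,\Tr(\bX^{2k})}{(t\sqrt{n})^{2k}}.
\end{align*}
Next I would expand the trace combinatorially,
\begin{align*}
\E\,\Tr(\bX^{2k})=\sum_{i_1,\dots,i_{2k}\in[n]}\E\bigl[X_{i_1 i_2}X_{i_2 i_3}\cdots X_{i_{2k} i_1}\bigr],
\end{align*}
and interpret each index sequence $(i_1,\dots,i_{2k})$ as a closed walk of length $2k$ on the complete graph $K_n$. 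By independence of the upper-triangular entries and the zero-mean assumption, any walk that traverses some edge an odd number of times contributes zero, so only walks in which every edge appears at least twice survive.

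The third step is the combinatorial estimate. For walks in which every edge is used exactly twice (the ``double-tree'' walks), the edge-set forms a tree on at most $k+1$ distinct vertices, the contribution per walk is bounded by $M^{2k}$, and the number of distinct shapes is the Catalan number $C_k\le 4^k$; each shape can be embedded in $K_n$ in at most $n(n-1)\cdots(n-k)\le n^{k+1}$ ways. Walks that revisit some edge more than twice are, by the F\"uredi--Koml\'os counting lemma, bounded by lower-order corrections of the form $n^{k+1}4^k M^{2k}\cdot O(k^2/n)^j$ for $j\ge 1$. Summing, one obtains
\begin{align*}
\E\,\Tr(\bX^{2k})\le C\, n^{k+1}\,(2M)^{2k}
\end{align*}
for any $k$ with $k\le c\sqrt{n}$ (and with a mildly weaker bound in a wider range). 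Inserting this into the Markov bound and setting $\kappa=4M^2$ yields
\begin{align*}
\prob\bigl\{\|\bX\|_2>t\sqrt{n}\bigr\}\le C\, n\,\Bigl(\frac{\kappa}{t^2}\Bigr)^{k}.
\end{align*}

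Finally I would optimize over $k$. Choosing $k=\lceil c_3\, n\rceil$ for a small constant $c_3$ (and imposing $t\ge c_1:= 2\sqrt{\kappa e}$ so that $\kappa/t^2\le e^{-1}$) gives
\begin{align*}
\prob\bigl\{\|\bX\|_2>t\sqrt{n}\bigr\}\le C\, n\,\exp\bigl(-c_3\, n\log(t^2/\kappa)\bigr)\le c_1\exp(-c_2\, tn),
\end{align*}
where in the last step I use $\log(t^2/\kappa)\ge c'\, t$ in the relevant range after adjusting constants (for very large $t$ the moment bound is even stronger than the claimed exponential, so one may simply weaken to $\exp(-c_2 tn)$). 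The main obstacle is the third step: controlling the non-double-tree walks uniformly in $k$ up to $k$ of order $n$, since the naive Wigner argument only handles $k\ll n^{1/2}$; this is precisely where the F\"uredi--Koml\'os refinement (careful encoding of ``excess'' edges and a geometric bookkeeping of deviations from the tree structure) is essential. Given that refinement, the final optimization over $k$ and the choice of the threshold constant $c_1$ are routine.
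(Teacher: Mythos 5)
The paper offers no proof of this statement at all --- it is quoted verbatim as Corollary 2.3.6 of Tao's book --- so the only question is whether your sketch actually establishes the bound. It does not: there is a genuine gap in the final optimization, and a second one in the moment estimate it relies on. With the trace bound $\E\,\Tr(\bX^{2k})\le C\,n^{k+1}(2M)^{2k}$ and the choice $k=\lceil c_3 n\rceil$, Markov gives $\P\{\|\bX\|_2>t\sqrt{n}\}\le C\,n\exp\bigl(-2c_3 n\log(t/\sqrt{\kappa})\bigr)$, whose exponent is linear in $n$ but only \emph{logarithmic} in $t$. The claimed bound $c_1\exp(-c_2tn)$ is linear in $t$, and your justification that ``$\log(t^2/\kappa)\ge c't$ in the relevant range'' is false: the relevant range is $c_1\le t\le Mn^{1/2}$ (above which the probability is zero by boundedness), and already at $t=\sqrt{n}$ you have $\log(t^2/\kappa)\approx\log n\ll t$. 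The parenthetical remark that for large $t$ the moment bound is ``even stronger'' than the claimed exponential has the inequality backwards. To extract $\exp(-c_2tn)$ from moments you would need $k\gtrsim tn/\log t$, i.e.\ $k$ at least of order $n$ and growing with $t$ --- and this is exactly where the second gap bites: the F\"uredi--Koml\'os counting (even with Vu's or Sinai--Soshnikov's refinements) controls the non-double-tree walks only for $k=o(n^{1/2})$ or at best $k=O(n^{2/3})$; for $k\sim n$ the degenerate walks are not known to be negligible by any elementary bookkeeping, and establishing the trace bound there is essentially equivalent to the large-deviation statement you are trying to prove. As written, your argument delivers only $\P\{\|\bX\|_2>t\sqrt{n}\}\le Cn\exp(-c\sqrt{n}\log t)$, which is much weaker than the statement.

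The route that actually works (and is the one in Tao's Section 2.3) is the epsilon-net argument: for a fixed unit vector $\bx$, the quadratic form $\langle\bx,\bX\bx\rangle$ is a sum of independent, mean-zero, bounded variables with $\sum_{i,j}x_i^2x_j^2=1$, so Hoeffding gives $\P\{|\langle\bx,\bX\bx\rangle|>t\sqrt{n}/2\}\le 2\exp(-ct^2n/M^2)$; a union bound over a $1/4$-net of the unit sphere, of cardinality $e^{O(n)}$, together with the standard comparison of $\|\bX\|_2$ to the supremum over the net, yields $\P\{\|\bX\|_2>t\sqrt{n}\}\le\exp(Cn-ct^2n/M^2)\le c_1\exp(-c_2tn)$ once $t\ge c_1$ is large enough. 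This gives the bound with exponent $t^2n$, which in particular implies the stated $tn$. I would recommend replacing the trace-method sketch by this argument (or simply retaining the citation, as the paper does).
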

\begin{theorem}[\cite{tao2012topics}, Theorem 2.3.24]
\label{lemma:Bai-Yin}
Let $\bX \in \reals^{n\times n}$
be a random symmetric matrix whose entries $X_{ij}$ are i.i.d 
copies of a zero-mean random variable with variance $1$ and finite fourth moment for $j \geq i$ and
$X_{ij} = X_{ji}$ for $j < i$. Then, $\lim_{n\to\infty} \|\bX\|_2/\sqrt{n} = 2$, almost surely.
\end{theorem}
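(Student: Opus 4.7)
The plan is to exhibit, for $G_n\sim\prob_{0,n}$, an explicit feasible $\bY^{\star}$ of the SDP~\eqref{equation:SDP} whose objective exceeds $2\,e(H_n)$ with high probability, which forces $T(G_n)=1$ under the null. I would take $\bY^{\star}=\bY^{\mathrm{u}}+\beta\,\bY^{\mathrm{b}}$, with $\bY^{\mathrm{u}}$ a deterministic ``uniform'' baseline and $\bY^{\mathrm{b}}$ a random spectral correction.

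For the baseline, let $\bY^{\mathrm{u}}$ be the average of the rank-one feasible matrices $\rvec(\b\Pi^{\sigma})\rvec(\b\Pi^{\sigma})^{\sT}$ over a uniformly random injection $\sigma\colon[k]\hookrightarrow[n]$; equivalently, $Y^{\mathrm{u}}_{(i,j),(i',j')}=\prob[\sigma(j)=i,\sigma(j')=i']$. A direct computation shows that $\bY^{\mathrm{u}}$ is PSD (as an average of PSDs), has entries in $\{0,\,1/n,\,1/(n(n-1))\}\subset[0,1]$, satisfies every linear (in)equality of~\eqref{equation:SDP}, and attains the objective value $4\,e(G_n)e(H_n)/(n(n-1))$. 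By Bernoulli concentration this equals $2q_0\,e(H_n)(1+o(1))$ with high probability, leaving a deficit of roughly $2(1-q_0)\,e(H_n)$ to be closed by the boost term.

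For the boost I would take $\bY^{\mathrm{b}}=\boldsymbol{V}\boldsymbol{V}^{\sT}\otimes\bA_{H_n}$, where $\boldsymbol{V}\in\reals^{n\times r}$ has orthonormal columns spanning the top eigenspace of the projected centered adjacency $(\id_n-\bone_n\bone_n^{\sT}/n)\,\bA_{G_n}\,(\id_n-\bone_n\bone_n^{\sT}/n)$, for $r=\Theta(n)$. Since $\boldsymbol{V}^{\sT}\bone_n=0$ and $\bA_{H_n}$ has zero trace and zero diagonal, $\bY^{\mathrm{b}}$ contributes $0$ to each linear constraint of~\eqref{equation:SDP}, so linear feasibility of $\bY^{\star}$ is preserved for every $\beta$. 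A decomposition of $\bY^{\mathrm{u}}$ along the four $S_n\times S_k$-isotypic subspaces $\bone_n^{\eta_1}\otimes\bone_k^{\eta_2}$ (with $\eta_1,\eta_2\in\{\mathrm{id},\perp\}$) yields eigenvalues $k/n$, $0$, $(n-k)/(n(n-1))$, $1/(n-1)$, and the unique zero-eigenspace $\bone_n\otimes\bone_k^{\perp}$ is annihilated by $\bY^{\mathrm{b}}$ (since $\boldsymbol{V}\boldsymbol{V}^{\sT}\bone_n=0$); consequently $\bY^{\star}\succeq 0$ holds provided $\beta\,|\lambda_{n}(\bA_{H_n})|\lesssim 1/n$. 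For the entrywise constraint, eigenvector delocalization for the Wigner ensemble $\bA_{G_n}-q_0(\J_n-\id_n)$ (a standard complement to Theorems~\ref{lemma:rmt}--\ref{lemma:Bai-Yin}) bounds off-diagonal entries of $\boldsymbol{V}\boldsymbol{V}^{\sT}$ by $O(1/\sqrt n)$ and diagonal entries by $r/n$, so $\beta\bY^{\mathrm{b}}$ is $o(1/n)$ uniformly and $0\le\bY^{\star}\le 1$.

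The boost's objective contribution is $\Tr\bigl((\bA_{G_n}\otimes\bA_{H_n})\bY^{\mathrm{b}}\bigr)=2\,e(H_n)\sum_{j=2}^{r+1}\lambda_j(\bA_{G_n})$. Applying Theorem~\ref{lemma:Bai-Yin} to the Wigner matrix $\bA_{G_n}-q_0(\J_n-\id_n)$ and integrating the semicircle density for $r/n$ in a neighborhood of $1/2$ gives $\sum_{j=2}^{r+1}\lambda_j(\bA_{G_n})\asymp n^{3/2}\sqrt{q_0(1-q_0)}$ with high probability, with tail concentration supplied by Theorem~\ref{lemma:rmt}. Choosing $\beta\asymp 1/(n\,\lambda_1(\bA_{H_n}))$ and combining $|\lambda_n(\bA_{H_n})|\le\lambda_1(\bA_{H_n})$ with $2e(H_n)\le k\,\lambda_1(\bA_{H_n})$, the boost is of order $\bigl(e(H_n)/\lambda_1(\bA_{H_n})\bigr)\sqrt{n\,q_0(1-q_0)}$; under the hypothesis $\lambda_1(\bA_{H_n})<\sigma(q_0)\sqrt{n}/4$ this exceeds $2(1-q_0)\,e(H_n)$, driving the total objective of $\bY^{\star}$ past $2\,e(H_n)$. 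The main obstacle is the narrow PSD/entrywise feasibility window: because the minimum positive eigenvalue of $\bY^{\mathrm{u}}$ is only $\Theta(1/n)$, $\beta$ is tightly constrained, so one must extract value from the entire bulk of the Wigner spectrum rather than just a few top eigenvalues, and carefully tracking the resulting semicircle integral together with the eigenvector-delocalization bounds is what pins down the precise constant $1/4$ appearing in the hypothesis.
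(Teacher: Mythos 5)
Your proposal does not prove the statement it is attached to. The statement in question is the Bai--Yin theorem: for a Wigner matrix $\bX$ with i.i.d.\ zero-mean, unit-variance entries of finite fourth moment, $\|\bX\|_2/\sqrt{n}\to 2$ almost surely. This is a classical result that the paper imports verbatim from Tao's book (Theorem 2.3.24 there) and does not prove; a proof would go through the trace-moment method, computing $\E\,\Tr(\bX^{2k})$ for $k$ growing slowly with $n$ and counting closed walks (or, for the matching lower bound, exhibiting near-extremal test vectors), together with a truncation argument to reduce from finite fourth moment to bounded entries. Nothing in your write-up engages with any of this.

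What you have written instead is a proof sketch for Theorem \ref{thm:convexgeneral}, the negative result about the SDP relaxation \eqref{equation:SDP}: you construct a feasible $\bY^{\star}=\bY^{\mathrm{u}}+\beta\,\bY^{\mathrm{b}}$ under the null model and argue its objective exceeds $2e(H_n)$. In that argument you actually \emph{invoke} the Bai--Yin theorem (to control $\sum_j\lambda_j(\bA_{G_n})$ over the top eigenspace), so your proposal uses the statement as a black box rather than establishing it. Whatever the merits of your SDP construction as an alternative to the paper's explicit choice of $\bY_n$ in \eqref{equation:constructedY}, it cannot be accepted as a proof of the random-matrix limit theorem: the two results live at entirely different levels of the paper's logical structure, and the target statement remains unproved by your submission.
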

\label{sec:ProofSpectral}
\subsection{Proof of Theorem \ref{thm:SpectralTest}}
First assume that $G_n$ is generated according to the null model $\P_{0,n}$. Then,
$\bA_{G_n}^{q_0}$ is a random symmetric matrix with independent entries where each
entry is a zero-mean Bernoulli random variable which is equal to 
$1$ with probability $q_0$ and $-q_0/(1-q_0)$ with probability $1-q_0$. Using Theorem \ref{lemma:Bai-Yin},
$\lambda_1(\bA_{G_n}^{q_0}) \leq 2.1\sigma(q_0)\sqrt{n}$ with high probability as $n\to \infty$.
Therefore, $\lim\sup_{n\to\infty}\prob_{0,n}\big(T_{\text{spec}}(G_n) = 1\big) = 0$. Now assume that $G_n$
is generated according to the planted model, $\P_{1,n}$, with parameters $q_0$ and $H_n$. Hence, $\bA_{G_n}^{q_0}$
is distributed as $\b\Pi_n^\sT\bA_{H_n}\b\Pi_n + \bE_n$
where $\b\Pi_n \in \{0,1\}^{v(H_n)\times n}$, and $(\b\Pi_{n})_{ij} = 1$ if and only if 
$\varphi_{0,n}(i) = j$. Further, $\bE_n$ is a random symmetric matrix with
independent entries where $(\bE_n)_{i,j} = 0$ if $(\b\Pi_n^\sT\bA_{H_n}\b\Pi_n)_{i,j} = 1$ and
$(\bE_n)_{i,j}$ is a zero mean Bernoulli random variable which is equal to 1
with probability $q_0$ and $-q_0/(1-q_0)$ with probability $1-q_0$, otherwise.
Let $\bv, \|\bv\|_2 = 1$ be the principal 
eigenvector of $\bA_{H_n}$. We have
\begin{align*}
\lambda_1(\bA_{G_n}^{q_0}) 
&\geq \left\langle \b\Pi_n^\sT \bv, \bA_{G_n}^{q_0}\b\Pi_n^\sT \bv\right\rangle
= \left\langle \b\Pi_n^\sT \bv, \b\Pi_n^\sT \bA_{H_n}\b\Pi_n \b\Pi_n^\sT \bv \right\rangle +
\left\langle \b\Pi_n^\sT \bv , \bE_n \b\Pi_n^\sT \bv \right\rangle \\
& = \left\langle \bv, \bA_{H_n}\bv\right\rangle + \left\langle \bv, \b\Pi_n \bE_n \b\Pi_n^\sT \bv\right\rangle
\end{align*}
Therefore,
\begin{align*}
\lim\inf_{n\to\infty} \frac{\lambda_1(\bA_{G_n}^{q_0})}{\sqrt{n}} &\geq 
\lim\inf_{n\to\infty} \frac{\lambda_1(\bA_{H_n})}{\sqrt{n}}
- \lim\sup_{n\to\infty}\frac{\left\langle \bv, \b\Pi_n \bE_n \b\Pi_n^\sT \bv\right\rangle}{\sqrt{n}}\\
& \geq 3\sigma(q_0) - \lim\sup_{n\to\infty}\frac{\lambda_{1}(\b\Pi_n\bE_n\b\Pi_n^\sT)}{\sqrt{n}}.
\end{align*}
Now, using Theorem \ref{lemma:rmt}, 
 $\lambda_{1}(\b\Pi_n\bE_n\b\Pi_n^\sT) \leq c\sqrt{v(H_n)}$, for some $c$, and large enough $n$, almost surely.
 Therefore, $\lim\sup_{n\to\infty}\lambda_{1}(\b\Pi_n\bE_n\b\Pi_n^\sT)/\sqrt{n} = 0$ and under the alternative,
 \begin{align*}
 \lim\inf_{n\to\infty} \frac{\lambda_1(\bA_{G_n}^{q_0})}{\sqrt{n}} \geq 2.1 \sigma(q_0),
\end{align*}
almost surely. Hence, 
$\lim\sup_{n\to\infty}\prob_{1,n}\big(T_{\text{spec}}(G_n) = 0\big) = 0$ and two models are strongly distinguishable 
using the spectral test.

\subsection{Proof of Theorem \ref{thm:spectralalgorithm}}
We start by proving some useful lemmas.
\begin{lemma}\label{lemma:balancedspectrum}
Let $\bA = \b\Pi^\sT \bA_{H_n}\b\Pi + \bE + \tilde\bE$ where $\bA$ is a symmetric $n$ by $n$ matrix,
$\b\Pi \in \{0,1\}^{k \times n}$, $\b\Pi\b\Pi^\sT = \id_k$, $\b\Pi \one=\one$, $\bA_{H_n}$ is $k$ by $k$ symmetric matrix, $k = o(n)$.
Further, let $\bE$ be a random symmetric matrix with independent entries where each
entry is a zero-mean Bernoulli random variable which is equal to 
$1$ with probability $p$ and $-p/(1-p)$ with probability $1-p$.
Finally, $\tilde E_{i,j} = -E_{i,j}$ if $(\b\Pi^\sT \bA_{H_n}\b\Pi)_{i,j} = 1$ and $\tilde E_{i,j} = 0$, otherwise. 
Let $\bv\in \reals^n,\bx\in\reals^k$, $\|\bv\|_2 = \|\bx\|_2 = 1$, be the leading eigenvectors
of $\bA$ and $\bA_{H_n}$, respectively. Assume that for some $\delta\in(0,1)$,
\begin{align*}
\lambda_1(\bA_{H_n})\ge\frac{3}{\eps\delta}\sqrt{\frac{np}{1-p}}
\end{align*}
then $\bv = \alpha\b\Pi^\sT \bx + \bz$ for some $\alpha$ such that 
$\alpha^2\ge 1-\delta$ and $\|\bz\|_2^2 \le \delta$, with high probability as $n\to \infty$.
\end{lemma}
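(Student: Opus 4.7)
The plan is to carry out a Davis--Kahan sin-$\Theta$ estimate, treating $\bA_G = \bM + \boldsymbol W$ as a perturbation of the signal matrix $\bM \equiv \b\Pi^\sT \bA_H \b\Pi$ by the noise $\boldsymbol W \equiv \bE + \tilde\bE$. Set $\boldsymbol u \equiv \b\Pi^\sT \bx$. The planting structure (which forces $\b\Pi$ to be a partial permutation, so $\b\Pi \b\Pi^\sT = \id_k$) gives that $\boldsymbol u$ is a unit vector and
\begin{align*}
\bM \boldsymbol u = \b\Pi^\sT \bA_H \b\Pi \b\Pi^\sT \bx = \b\Pi^\sT \bA_H \bx = \lambda_1(\bA_H)\, \boldsymbol u,
\end{align*}
so $\boldsymbol u$ is the top eigenvector of $\bM$. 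The remaining nonzero eigenvalues of $\bM$ coincide with those of $\bA_H$ (together with $n-k$ zeros), so the spectral-expansion hypothesis implies that the restriction of $\bM$ to $\boldsymbol u^\perp$ has operator norm at most $(1-\eps)\lambda_1(\bA_H)$.

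Next I would control the perturbation $\boldsymbol W$. The matrix $\bE$ has independent, zero-mean, uniformly bounded entries with variance $p/(1-p)$, so after rescaling by $\sqrt{(1-p)/p}$, Theorem \ref{lemma:Bai-Yin} (with Theorem \ref{lemma:rmt} supplying a high-probability tail) gives $\|\bE\|_2 \le (2+o(1))\sqrt{np/(1-p)}$ with high probability. The correction $\tilde\bE$ is supported on the $k \times k$ submatrix indexed by the planted vertices and has entries bounded by $1/(1-p)$; applying Theorem \ref{lemma:rmt} to this block gives $\|\tilde\bE\|_2 = O(\sqrt{k}/\sqrt{1-p})$, which is $o(\sqrt{n})$ by the assumption $k = o(n)$. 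Altogether, $\|\boldsymbol W\|_2 \le (2 + o(1))\sqrt{np/(1-p)}$ with high probability.

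Now decompose $\bv = \alpha \boldsymbol u + \bz$ with $\bz \perp \boldsymbol u$, $\|\bv\|_2 = 1$, and the sign of $\bv$ chosen so that $\alpha \ge 0$. Projecting the eigen-equation $\bA_G \bv = \lambda_1(\bA_G)\bv$ onto $\boldsymbol u^\perp$ via $\id - \boldsymbol u \boldsymbol u^\sT$, and using that $\bM$ preserves $\boldsymbol u^\perp$ (since $\bM^\sT \boldsymbol u = \lambda_1(\bA_H) \boldsymbol u$), gives
\begin{align*}
\bigl(\lambda_1(\bA_G)\, \id - \bM\bigr)\bz = (\id - \boldsymbol u \boldsymbol u^\sT)\, \boldsymbol W \bv.
\end{align*}
Weyl's inequality yields $\lambda_1(\bA_G) \ge \lambda_1(\bA_H) - \|\boldsymbol W\|_2$, so on $\boldsymbol u^\perp$ the operator $\lambda_1(\bA_G) \id - \bM$ has smallest singular value at least $\eps\, \lambda_1(\bA_H) - \|\boldsymbol W\|_2$. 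Hence
\begin{align*}
\|\bz\|_2 \le \frac{\|\boldsymbol W\|_2}{\eps\, \lambda_1(\bA_H) - \|\boldsymbol W\|_2}.
\end{align*}
Substituting the hypothesis $\eps\, \lambda_1(\bA_H) \ge (3/\delta)\sqrt{np/(1-p)}$ and the bound on $\|\boldsymbol W\|_2$ gives
\begin{align*}
\|\bz\|_2^2 \le \Bigl(\frac{(2+o(1))\delta}{3-(2+o(1))\delta}\Bigr)^2.
\end{align*}
For the range of $\delta$ arising from the constraint $2\delta/(\mu^2(1-\delta)) < 1$ of Theorem \ref{thm:spectralalgorithm} (which forces $\delta < 1/3$ since $\mu \le 1$), this quantity is at most $\delta$, and therefore $\alpha^2 = 1 - \|\bz\|_2^2 \ge 1 - \delta$ with high probability.

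The main delicate point is keeping the numerical constants tight: the gap $\eps \lambda_1(\bA_H) - \|\boldsymbol W\|_2$ must remain positive and comparable to the signal for the sin-$\Theta$ bound to yield $\alpha^2 \ge 1-\delta$, which in turn forces the use of the sharp Bai--Yin constant $2$ for $\|\bE\|_2$ (rather than a cruder constant like $3$) so that the hypothesized coefficient $3/(\eps\delta)$ suffices. A secondary technical nuisance is that $\tilde\bE$ cannot be bounded by the naive $\|\tilde\bE\|_F \le k/(1-p)$, which would be comparable to the signal; one instead exploits that $\tilde\bE$ lives on a $k\times k$ random block with $k = o(n)$ so that Theorem \ref{lemma:rmt} renders it negligible relative to the full-size noise $\bE$.
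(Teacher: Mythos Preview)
Your Davis--Kahan argument is sound and the control of $\|\bE\|_2$, $\|\tilde\bE\|_2$ is correct, but the approach differs from the paper's and leaves a gap in the range of $\delta$.

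\textbf{Comparison with the paper.} The paper does not invoke Davis--Kahan. Instead it argues by contradiction directly on the Rayleigh quotient: writing $\bv = \b\Pi^\sT(\alpha\bx+\beta\by)+\bv_{\bar S}$ with $\by\perp\bx$, it assumes $\alpha^2<1-\delta$ and bounds
\[
\langle\bv,\bA_G\bv\rangle \le \alpha^2\lambda_1(\bA_H)+(1-\alpha^2)(1-\eps)\lambda_1(\bA_H)+(2+o(1))\sqrt{np/(1-p)}+c\sqrt{k},
\]
which under the hypothesis is strictly below $\lambda_1(\bA_H)\le\lambda_1(\bA_G)$, contradicting that $\bv$ is the leading eigenvector. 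The key difference is that this bound is \emph{linear} in the noise level, whereas your sin-$\Theta$ bound $\|\bz\|_2\le \|\boldsymbol W\|_2/(\eps\lambda_1(\bA_H)-\|\boldsymbol W\|_2)$ loses a factor by subtracting $\|\boldsymbol W\|_2$ in the denominator before squaring.

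\textbf{The gap.} Because of that loss, your final inequality $\bigl(2\delta/(3-2\delta)\bigr)^2\le\delta$ is equivalent to $4\delta^2-16\delta+9\ge0$, which holds only for $\delta\le 2-\sqrt{7}/2\approx 0.677$. The lemma, however, is stated for all $\delta\in(0,1)$. Your fix---importing the constraint $2\delta/(\mu^2(1-\delta))<1$ from Theorem~\ref{thm:spectralalgorithm} to force $\delta<1/3$---is not legitimate here: that constraint involves $\mu$, which does not appear in the hypotheses of Lemma~\ref{lemma:balancedspectrum}, and the lemma must stand on its own. So as written you have proved a weaker statement (the lemma restricted to $\delta\lesssim 0.677$), which happens to suffice for the downstream application but does not match the claim. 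The paper's Rayleigh-quotient route avoids this issue entirely and delivers the full range $\delta\in(0,1)$ with the same constant $3$.
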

\begin{proof}
Let $S\subseteq[n]$ be the set of $i$'s for which the $i$'th column of $\b\Pi$
is not entirely zero. We denote the complement of this set by $\bar{S}$.
We can write $\bv = \b\Pi^\sT(\alpha\bx+\beta\by) + \bv_{\bar S} = \alpha\b\Pi^\sT\bx + \bz$, where
$\by\in\reals^k$ is such that 
$\by\perp \bx$ and $\alpha^2 + \beta^2 = \|\bv_{S}\|_2^2$. In addition, note that
$\bz \perp \b\Pi^\sT \bx$. Hence, $\|\bz\|^2_2 + \alpha^2\|\b\Pi^\sT \bx\|_2^2 = \|\bv\|_2^2 = 1$. Thus,
$\|\bz\|_2^2 = 1 - \alpha^2\|\b\Pi^\sT \bx\|_2^2 = 1-\alpha^2$.
Now, if $\alpha^2 <1-\delta$, then
\begin{align*}
\left\langle \bv,\bA\bv\right\rangle = \alpha^2\left\langle \bx, \bA_{H_n}\bx \right\rangle + 2\alpha\beta\left\langle \bx,\bA_{{H_n}}\by\right\rangle + \beta^2\left\langle \by,\bA_{H_n} \by\right\rangle + \left\langle \bv,\bE \bv\right\rangle + \langle \bv,\tilde\bE \bv\rangle.
\end{align*}
Since $\bx$ is an eigenvector of $\bA_{H_n}$ and $\bx\perp\by$, we have $\left\langle \bx,\bA_{{H_n}}\by\right\rangle = 0$.
 Now, using Theorems \ref{lemma:rmt}, \ref{lemma:Bai-Yin}, with high probability as $n\to \infty$,
 \begin{align*}
 \langle v,\bA v\rangle&\leq\alpha^2\lambda_1(\bA_{H_n}) + (\|\bv_S\|_2^2 - \alpha^2)(1-\eps)\lambda_1(\bA_{H_n}) + (2+o(1))\sqrt{\frac{np}{1-p}} + c\sqrt{k}\\
 &\leq (1-\delta)\lambda_1(\bA_{H_n}) + \delta(1-\eps)\lambda_1(\bA_{H_n}) + (2+o(1))\sqrt{\frac{np}{1-p}} + c\sqrt{k},
\end{align*}
and
\begin{align}
\langle\b\Pi^\sT\bx, \bA\b\Pi^\sT\bx\rangle = \langle\bx, \b\Pi \bA\b\Pi^\sT\bx\rangle
= \left\langle\bx, \bA_{H_n}\bx\right\rangle + \langle\bx, \b\Pi(\bE+\tilde\bE)\b\Pi^\sT \bx\rangle \geq \lambda_1(\bA_{H_n}) - c^\prime\sqrt{k}.
\end{align}
Therefore, if $\lambda_1(\bA_{H_n}) \geq \frac{3}{\eps\delta}\sqrt{\frac{np}{1-p}}$,
then $\left\langle \bv,\bA\bv\right\rangle<\langle\b\Pi^\sT\bx, \bA\b\Pi^\sT\bx\rangle$ with high probability as $n\to \infty$.
Hence, if $\alpha^2 <1-\delta$,
$\bv$ cannot be the leading eigenvector of $\bA$ and the lemma is proved.  
\end{proof}
The following lemma is an immediate consequence of the above lemma.
\begin{lemma}\label{lemma:spectralalg}
Let $\{H_n\}_{n\ge 1}$ be a sequence of graphs that are $(\eps,\mu)$-balanced in spectrum
for some $\mu>0, \eps\in (0,1)$. Further, let
$\varphi_{0,n}\in\cL(H_n,n)$ be a labeling of $H_n$ vertices in $[n]$, $v(H_n) = o(n)$.
Suppose that $G_n$ is generated according to $\prob_{1,n}(\,\cdot\, |\varphi=\varphi_0)$ as in Eq.~\eqref{equation:alternative}.
Take $\bv$ to be the leading eigenvector of $\bA_{G_n}^{q_0}$.
Let $|v_{j(1)}|\geq|v_{j(2)}|\geq\dots\geq|v_{j(n)}|$ be the entries of $\bv$ and $S^\prime=\{j(1),j(2),\dots,j(v(H_n))\}$.
If
\begin{align*} 
\lambda_1(\bA_{H_n})\ge\frac{3}{\eps\delta}\sqrt{\frac{nq_0}{1-q_0}},
\end{align*}
then
\begin{align*} 
|S^\prime\cap \varphi_0(V(H_n))|\ge \left(1-\frac{2\delta}{\mu^2(1-\delta)}\right)v(H_n),
\end{align*}
with high probability
as $n\to \infty$.
\end{lemma}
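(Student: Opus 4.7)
The plan is to invoke Lemma \ref{lemma:balancedspectrum} as a black box and then extract the combinatorial conclusion via a short pairing argument. Writing $\bA_{G_n}^{q_0} = \b\Pi_n^\sT \bA_{H_n}\b\Pi_n + \bE_n + \tilde{\bE}_n$ as in the proof of Theorem \ref{thm:SpectralTest} (with $\b\Pi_n$ the $v(H_n)\times n$ indicator matrix of $\varphi_{0,n}$), the hypothesis on $\lambda_1(\bA_{H_n})$ is exactly what Lemma \ref{lemma:balancedspectrum} requires for $p = q_0$. Hence, with high probability, the leading eigenvector of $\bA_{G_n}^{q_0}$ admits a decomposition $\bv = \alpha\b\Pi_n^\sT \bx + \bz$, where $\bx$ is the unit leading eigenvector of $\bA_{H_n}$ and $\alpha^2 \ge 1 - \delta$. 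The three summands $\alpha\b\Pi_n^\sT\bx$, $\beta\b\Pi_n^\sT\by$ (with $\by$ unit and orthogonal to $\bx$), and $\bv_{\bar S}$ arising in the proof of Lemma \ref{lemma:balancedspectrum} lie in pairwise orthogonal subspaces of $\reals^n$, so $\|\bv\|_2^2 = \alpha^2 + \beta^2 + \|\bv_{\bar S}\|_2^2 = 1$ and thus $\|\bz\|_2^2 = 1 - \alpha^2 \le \delta$.

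Next I would set $k := v(H_n)$, $S := \varphi_{0,n}(V(H_n))$, and define $T := S \setminus S'$, $R := S' \setminus S$. Since $|S| = |S'| = k$, we have $|T| = |R|$, so I fix an arbitrary bijection $\pi\colon T \to R$. Two pointwise observations carry all the weight. First, by the $(\eps,\mu)$-balance of $H_n$, $|x_j| \ge \mu/\sqrt{k}$ for every $j\in V(H_n)$, and since $(\b\Pi_n^\sT\bx)_i = x_{\varphi_{0,n}^{-1}(i)}$ for $i\in S$, this yields $\bigl(\alpha(\b\Pi_n^\sT\bx)_i\bigr)^2 \ge (1-\delta)\mu^2/k$. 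Second, for $j\in R\subseteq \bar S$ we have $(\b\Pi_n^\sT\bx)_j = 0$, whence $v_j = z_j$; and for $i\in T$ the order-statistic definition of $S'$ forces $|v_{\pi(i)}|\ge |v_i|$, because $\pi(i)\in S'$ while $i\notin S'$.

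The core estimate is now a single application of $(a+b)^2 \le 2a^2 + 2b^2$: for each $i\in T$,
\begin{align*}
\frac{(1-\delta)\mu^2}{k} \;\le\; \bigl(\alpha(\b\Pi_n^\sT\bx)_i\bigr)^2 \;=\; (v_i - z_i)^2 \;\le\; 2v_i^2 + 2z_i^2 \;\le\; 2 v_{\pi(i)}^2 + 2z_i^2.
\end{align*}
Summing over $i\in T$, using $v_{\pi(i)} = z_{\pi(i)}$ together with the disjointness of $R$ and $T$, gives
\begin{align*}
|T|\,\frac{(1-\delta)\mu^2}{k} \;\le\; 2\sum_{j\in R} z_j^2 + 2\sum_{i\in T} z_i^2 \;\le\; 2\|\bz\|_2^2 \;\le\; 2\delta,
\end{align*}
which rearranges to the asserted bound $|S'\cap S|\ge \bigl(1 - 2\delta/((1-\delta)\mu^2)\bigr)k$. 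The only bookkeeping point I expect to need care with is the orthogonality argument yielding $\|\bz\|_2^2 \le 1-\alpha^2$; this reduces to $\b\Pi_n\b\Pi_n^\sT = \id_k$ combined with the disjoint supports of $\b\Pi_n^\sT(\alpha\bx + \beta\by)$ and $\bv_{\bar S}$, after which the remainder is a two-line computation.
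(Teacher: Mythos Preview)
Your proposal is correct and is essentially the paper's own argument, just written out more explicitly: the paper also invokes Lemma~\ref{lemma:balancedspectrum} to obtain $\bv = \alpha\b\Pi_n^\sT\bx + \bz$ with $\|\bz\|_2^2\le\delta$, and then runs the same pairing between $S'\setminus S$ and $S\setminus S'$ together with the same $(a+b)^2\le 2a^2+2b^2$ estimate (stated there as $z_i^2+z_{i'}^2\ge 2(\mu\sqrt{1-\delta}/(2\sqrt{k}))^2$). Your version is in fact cleaner in that it makes the bijection and the orthogonality step $\|\bz\|_2^2 = 1-\alpha^2$ explicit, both of which the paper uses but leaves implicit.
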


\begin{proof}
Note that $\bA_{G_n}^{q_0}$ is distributed as $\b\Pi_n^\sT\bA_{H_n}\b\Pi_n + \bE_n + \tilde\bE_{n}$
where $\b\Pi_n \in \{0,1\}^{v(H_n)\times n}$, and $(\b\Pi_{n})_{ij} = 1$ if and only if 
$\varphi_{0,n}(i) = j$. Further, $\bE_n$ is a random symmetric matrix with
independent entries where each entry is a zero-mean Bernoulli random variable which is equal to 1
with probability $q_0$ and $-q_0/(1-q_0)$ with probability $1-q_0$.
Finally, $(\tilde\bE_n)_{i,j} = -(\bE_n)_{i,j}$ if $(\b\Pi^\sT \bA_{H_n}\b\Pi)_{i,j} = 1$ and $(\tilde\bE_n)_{i,j} = 0$, otherwise.
Hence, defining $\bx$ to be the leading eigenvector of $\bA_{H_n}$, using Lemma \ref{lemma:balancedspectrum},
$\bv = \tilde{\bx} + \bz$, $\tilde{\bx} = \b\Pi_n^\sT\bx$,
$\|\bx\|^2 \geq 1-\delta$ and $\bz \perp \tilde{\bx}$, with high probability.
Let $S = \varphi_0(V(H_n))$, using the assumption that $H_n$ is $(\eps,\mu)$-balanced in spectrum, 
for $i \in S$, $|\tilde x_i|\geq \mu\sqrt{1-\delta}/\sqrt{v(H_n)}$. Note that for $i \notin S$, $\tilde x_i = 0$.
Therefore, for any $i \in (\bar S \cap S^\prime)$, there exists an index $i^\prime\in(\bar{S^\prime}\cap S)$ such that
$z_i^2 + z_{i^\prime}^2 \geq 2\left(\mu\sqrt{1-\delta}/(2\sqrt{v(H_n)})\right)^2$. Hence,
letting $N$ be the number of indices in $S^\prime$ which are not in $S$, we have
\begin{align*}
2N\left(\frac{\mu\sqrt{1-\delta}}{2\sqrt{v(H_n)}}\right)^2 \leq \|\bz\|_2^2 = 1-\|\bx\|_2^2 \leq \delta.
\end{align*}
Therefore, $N \leq 2\delta v(H_n)/(\mu^2(1-\delta))$ and 
$|S^\prime\cap \varphi_0(V(H_n))|\ge \left(1-\frac{2\delta}{\mu^2(1-\delta)}\right)v(H_n)$ with high probability
as $n\to \infty$.
\end{proof}
Now we prove Theorem \ref{thm:spectralalgorithm}. 
\begin{proof}[Proof of Theorem \ref{thm:spectralalgorithm}]
First assume that $i \notin \varphi_0(V(H_n))$. Recall that $d^{(i)}$ is the number of edges 
between vertex $i$ and vertices in $S_i$. Further $S_i$ only depends on the edges induced by $V\setminus \{i\}$. Hence, we have
\begin{align*}
d^{(i)} = \sum_{j = 1}^{v(H_n)} X_j
\end{align*}
where $\{X_j\}$ is a sequence of i.i.d Bern($q_0$) random variables. Therefore using Bernstein's inequality
\begin{align*}
\sum_{i \notin \varphi_0(V(H_n))}\P \left\{d^{(i)} > t\right\} &\leq n\P \left\{d^{(i)} > t\right\} \\
&\leq n\exp\left\{-\frac{(1/2)(t-v(H_n)q_0)^2}{(1/3)(t-v(H_n)q_0) + v(H_n)q_0(1-q_0)}\right\} \\
&\leq n\exp\left\{-\frac{(9/2)v(H_n)q_0\log v(H_n)}{\sqrt{v(H_n)q_0\log v(H_n)}+v(H_n)q_0(1-q_0)}\right\}\\
&\leq \exp\left\{\log n - 4\log v(H_n)\right\},
\end{align*}
for large enough $n$. Using \eqref{eq:spectralnecessary}, this goes to zero as $n\to\infty$. Therefore, using union bound,
the output set $S$ of Algorithm
\ref {algorithm:spectral} will be a subset of $\varphi_0(V(H_n))$.

Next, assume that $i = \varphi_0(\tilde i)$ and $\tilde i \in S_c(H_n)$ where $c$ is as in Theorem \ref{thm:spectralalgorithm}.
Using Lemma \ref{lemma:spectralalg},
$|S_i \cap \varphi_0(V(H_n))| \ge (1-\alpha)v(H_n)$.
Also, note that $S_i$ only depends on the edges induced by $V\setminus \{i\}$.
Therefore, $d^{(i)}$ dominates $d^\prime$, where
\begin{align*}
d^\prime = (c-\alpha)v(H_n) + \sum_{j=1}^{(1-c+\alpha)v(H_n)}X_j,
\end{align*}
in which $\{X_j\}$ is a sequence of i.i.d Bern($q_0$) random variables.
Note that here $\E d^\prime = v(H_n)q_0 + (1-q_0)(c-\alpha)v(H_n)$. Hence, using Bernstein's inequality we get
\begin{align*}
\sum_{i\in \varphi_0(S_c(H_n))}&\P \left\{d^{(i)} \le t \right\} \le v(H_n) \P \left\{d^{\prime} \le t \right\}\\
&\leq v(H_n)\exp\left\{-\frac{(1/2)(v(H_n)(q_0+(1-q_0)(c-\alpha))-t)^2}{(1/3)t + q_0(1-q_0)(1-c+\alpha)v(H_n)} \right\}\\
&\leq v(H_n)\exp\left\{-\frac{(1/4)(1-q_0)^2(c-\alpha)^2v(H_n)^2}{(1/3)(1-c+\alpha)v(H_n)+q_0(1-q_0)(1-c+\alpha)v(H_n)}\right\}\\
&\leq v(H_n)\exp\left\{-C^\prime v(H_n)\right\} \to 0,
\end{align*}
as $n, v(H_n)\to \infty$. Thus, by union bound, the output of Algorithm \ref{algorithm:spectral},
contains all the nodes in the $c$-significant
set of planted subgraph $H_n$ in $G_n$ and has no nodes which are not in 
the planted subgraph $H_n$, with high probability, as $n,v(H_n)\to \infty$.
\end{proof}
%
%
\section{Proofs: SDP relaxation}
\label{sec:ProofSDP}

For simplicity we denote $v(H_n)$ by $k_n$.
We start by proving Lemma \ref{lemma:tensorformulation}.
\subsection{Proof of Lemma \ref{lemma:tensorformulation}}
First note that
every feasible $\b\Pi$ in \eqref{equation:QAP}, corresponds uniquely to an injective
mapping $\varphi$ from $[k]$ to $[n]$ where $\varphi(i) = j$ if and only if $\Pi_{ji} = 1$. Based on this,
we have
\begin{align*}
\Tr\left(\bA_H\b\Pi^\sT \bA_G\b\Pi\right) = \sum_{i,j = 1}^{k}\left(\bA_H\right)_{ij}\left(\bA_G\right)_{\varphi(i)\varphi(j)} = \Tr\left((\bA_G\otimes \bA_H)\bY\right).
\end{align*}
where $\bY = \by\by^\sT$ and $\by = \rvec(\b\Pi)$. Moreover $\bY$ is a 
rank one positive definite matrix in $\{0,1\}^{nk\times nk}$. Also,
\begin{align}
\label{equation:sumallY}
\Tr (\bY \J_{nk}) = \sum_{i,j = 1}^{nk} Y_{ij} = \left(\sum_{i = 1}^{nk}y_i\right)^2 = k^2.
\end{align}
In addition, for $i = 1,2,\dots,k$
\begin{align}
\label{equation:sumrowsY}
\Tr(\bY(\id_n\otimes(\bfe_i\bfe_i^\sT))) = \sum_{l=0}^{n-1}y_{kl+i} = \sum_{j = 1}^{n}\Pi_{ji} = 1.
\end{align}
Further, for $j = 1,2,\dots,n$
\begin{align}
\label{equation:sumcolsY}
\Tr (\bY ((\bfe_j\bfe_j^\sT)\otimes \id_k)) = \sum_{l=0}^{k-1}y_{(j-1)k+l} = \sum_{i = 1}^{k}\Pi_{ji} \leq 1.
\end{align}
Therefore, $\bY$ is feasible for problem \eqref{equation:SDP}. Conversely, if $\bY$ is feasible for 
problem \eqref{equation:SDP}, then $\bY = \by\by^\sT$ where $\by \in \{0,1\}^{nk}$. Also, 
using \eqref{equation:sumallY}, 
$\by$ has exactly $k$ entries equal to one and $n-k$ entries equal to zero. Further, using the first
equality in \eqref{equation:sumrowsY}, we deduce that for $i = 1,2,\dots,k,$
\begin{align*}
\sum_{l=0}^{n-1}y_{kl+i} = 1.
\end{align*} 
Also, using the first equality in \eqref{equation:sumcolsY} for $j = 1,2,\dots,n,$
\begin{align*}
\sum_{l=0}^{k-1}y_{(j-1)k+l} \leq 1.
\end{align*} 
This means that the matrix $\b\Pi \in \{0,1\}^{n\times k}$ whose 
$j$'th row is $\left[y_{(j-1)k+1}, y_{(j-1)k+2}, \dots, y_{jk}\right]$ has exactly one entry equal
to one in each column. Therefore, $\b\Pi$ is feasible for problem equation \eqref{equation:QAP}.
This completes the proof of Lemma \ref{lemma:tensorformulation}.
\subsection{Proof of Theorem \ref{thm:convexgeneral}}
The following lemma about the spectrum 
of a random Erd\H{o}s-R\'enyi graph is a consequence of Theorem \ref{lemma:Bai-Yin}.
\begin{lemma}\label{lemma:randomspectrum}
Let $\bA \in \{0,1\}^{n\times n}$ be a random matrix with independent entries such that
\begin{align*}
A_{ij} = \begin{cases}
1 & \mbox{ with probability $p_{ij}$}\\
0 & \mbox{ with probability $1-p_{ij}$,}
\end{cases}
\end{align*}
where $p_{ij} = p$ if $i\neq j$, $p_{ii} = 0$ and $p\in (0,1)$ is a constant. Then,
\begin{align*}
&\lim_{n\to \infty}\frac{\lambda_1(\bA)}{np} = 1,\\
&\lim\sup_{n\to \infty}\frac{-\lambda_n(\bA)}{2\sqrt{np(1-p)}} = 1,
\end{align*}
almost surely.
\end{lemma}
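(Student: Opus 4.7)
The plan is to decompose $\bA$ into its expectation and a centered noise matrix, evaluate each piece separately, and then combine via Weyl's inequality (for $\lambda_1$) and Cauchy interlacing (for $\lambda_n$), using Theorem \ref{lemma:Bai-Yin} as the black-box random-matrix input.

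Step 1 (decomposition). Write $\bA = \bM + \bE$ with $\bM \equiv \E[\bA] = p(\J_n - \id_n)$. The deterministic part has a completely explicit spectrum: one eigenvalue $p(n-1)$ with eigenvector proportional to $\bone_n$, and the eigenvalue $-p$ with multiplicity $n-1$ on $\bone_n^{\perp}$. The noise $\bE$ is symmetric with zero diagonal, independent off-diagonal entries bounded by $1$ in absolute value, each with mean zero and variance $p(1-p)$.

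Step 2 (spectral norm of $\bE$). Introduce an auxiliary matrix $\tilde\bE$ obtained from $\bE$ by replacing the (zero) diagonal with iid copies of the same centered Bernoulli variable, and set $\bD = \tilde\bE - \bE = \mathrm{diag}(\tilde\bE)$. Then $\tilde\bE/\sqrt{p(1-p)}$ has iid zero-mean unit-variance entries on and above the diagonal, so Theorem \ref{lemma:Bai-Yin} gives $\|\tilde\bE\|_2/\sqrt{np(1-p)} \to 2$ almost surely. Since $\|\bD\|_2 \le 1$ and the hypothesis $\log n / (np) \to 0$ forces $\sqrt{np(1-p)} \to \infty$, one concludes $\|\bE\|_2 = (2+o(1))\sqrt{np(1-p)}$ a.s. The same argument, applied to $\lambda_n(\tilde\bE)$ and $\lambda_{n-1}(\tilde\bE)$, yields $\lambda_n(\bE)/\sqrt{np(1-p)} \to -2$ and $\lambda_{n-1}(\bE)/\sqrt{np(1-p)} \to -2$.

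Step 3 (top eigenvalue). By Weyl's inequality, $|\lambda_1(\bA) - \lambda_1(\bM)| \le \|\bE\|_2 = O(\sqrt{np(1-p)})$. Since $\lambda_1(\bM) = p(n-1)$, dividing through gives $\lambda_1(\bA)/(np) = 1 + O\!\bigl(\sqrt{(1-p)/(np)}\bigr) = 1 + o(1)$ under the hypothesis, proving the first assertion.

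Step 4 (bottom eigenvalue) and the main obstacle. Observe the identity $\bA + p\id_n = \bE + p\J_n$, which exhibits $\bA + p\id_n$ as a rank-one PSD perturbation of $\bE$. Cauchy interlacing therefore gives $\lambda_n(\bE) \le \lambda_n(\bA) + p \le \lambda_{n-1}(\bE)$. Both bounds are $-(2+o(1))\sqrt{np(1-p)}$ by Step 2, and $p = o(\sqrt{np(1-p)})$, so $-\lambda_n(\bA)/(2\sqrt{np(1-p)}) \to 1$. The main technical obstacle is verifying the hypotheses of Theorem \ref{lemma:Bai-Yin} when $p = p(n) \to 0$: the rescaled entries $(A_{ij}-p)/\sqrt{p(1-p)}$ have fourth moment of order $1/p$, which is unbounded in that regime. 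For $p$ bounded away from $0$ and $1$ the theorem applies verbatim; to cover the full range permitted by $\log n/(np) \to 0$ while still retrieving the sharp constant $2$ one must replace Bai--Yin by a Füredi--Komlós / Feige--Ofek-type estimate tailored to sparse Wigner matrices. This is purely a strengthening of Step 2 and leaves the overall argument unchanged.
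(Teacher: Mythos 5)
Your proposal is correct and is essentially the argument the paper intends: the paper gives no written proof, merely asserting the lemma is "a consequence of" the Bai--Yin theorem, and your decomposition $\bA=\E\bA+\bE$ followed by Weyl for $\lambda_1$ and rank-one interlacing for $\lambda_n$ is the natural way to make that deduction, with the added merit that you honestly flag the hypothesis mismatch when $p\to 0$ (the rescaled entries' fourth moment blows up), a point the paper silently glosses over even though in its only application $p=q_0$ is a fixed constant so Bai--Yin applies verbatim. One small caveat: in Step 4 you assert the full limit $-\lambda_n(\bA)/(2\sqrt{np(1-p)})\to 1$, but Bai--Yin as quoted controls only $\|\tilde\bE\|_2$, which yields $\lambda_{n-1}(\bE)\ge\lambda_n(\bE)\ge -(2+o(1))\sqrt{np(1-p)}$ and hence only the "$\le 1$" direction; the matching lower bound on $-\lambda_{n-1}(\bE)$ requires the semicircle law (or similar), though the lemma only claims a $\limsup$ and the "$\le$" direction is the one actually used downstream in the SDP argument.
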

\begin{lemma}\label{lemma:randomspectrumlaplace}
Let $\bA \in \{0,1\}^{n\times n}$ be a random matrix with independent entries such that
\begin{align*}
A_{ij} = \begin{cases}
1 & \mbox{ with probability $p_{ij}$}\\
0 & \mbox{ with probability $1-p_{ij}$,}
\end{cases}
\end{align*}
where $p_{ij} = p$ if $i\neq j$, $p_{ii} = 0$ and $p \in (0,1)$ is a constant.
Let $\bD$ be a $n$ by $n$
diagonal matrix such that $D_{ii} = \displaystyle\sum_{j=1}^{n}A_{ij}$, $\bL = \bD-\bA$.
Then,
\begin{enumerate}[(i)]
\item $\bL \succeq 0$.
\item $\bL \one_{n} = 0$.
\item if $\lambda_2(\bL)$ is the second smallest eigenvalue of $\bL$ then,
\begin{align*}
\lambda_2(\bL) = np - 2(1+o(1))\sqrt{np(1-p)} 
\end{align*}
almost surely, as $n \to \infty$.
\end{enumerate}
\end{lemma}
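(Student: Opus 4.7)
For part (i), the standard identity $\bv^\sT\bL\bv=\sum_{\{i,j\}\in E(G)}(v_i-v_j)^2\geq 0$ (where $G$ is the graph with adjacency matrix $\bA$) gives $\bL\succeq 0$. For part (ii), $(\bL\one_n)_i = D_{ii}-\sum_j A_{ij}=0$ by the definition of $\bD$. These are immediate from the construction of $\bL$ and do not use any randomness.

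The main work is in part (iii). The plan is to compute $\E\bL = np\,\id_n - p\J_n$, observe that its spectrum consists of $0$ on $\one_n$ and $np$ with multiplicity $n-1$ on $\one_n^\perp$, and combine with the Courant--Fischer characterization (using $\bL\one_n=0$ from part (ii)) to obtain
\[
\lambda_2(\bL)=\min_{\bv\perp\one_n,\,\|\bv\|=1}\bv^\sT\bL\bv = np+\min_{\bv\perp\one_n,\,\|\bv\|=1}\bv^\sT(\bL-\E\bL)\bv.
\]
Thus it suffices to show that the fluctuation minimum equals $-2(1+o(1))\sqrt{np(1-p)}$ almost surely. I would decompose $\bL-\E\bL = (\bD-(n-1)p\,\id_n) - (\bA-\E\bA)$; Theorem \ref{lemma:Bai-Yin} applied to $(\bA-\E\bA)/\sqrt{p(1-p)}$ (whose off-diagonal entries are centered with unit variance and bounded fourth moment) yields $\|\bA-\E\bA\| = 2(1+o(1))\sqrt{np(1-p)}$ almost surely. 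For the upper bound on $\lambda_2(\bL)$ I would plug in $\bv_\ast$, the unit eigenvector of $\bA-\E\bA$ with most negative eigenvalue, projected onto $\one_n^\perp$ (a small correction since extremal eigenvectors of centered Wigner-type matrices are nearly orthogonal to $\one_n$), so that $-\bv_\ast^\sT(\bA-\E\bA)\bv_\ast = -2(1+o(1))\sqrt{np(1-p)}$. The residual diagonal term $\sum_i(v_\ast)_i^2(D_{ii}-(n-1)p)$ is then controlled using delocalization of extremal eigenvectors ($\|\bv_\ast\|_\infty = O(n^{-1/2})$ up to polylogarithmic factors) together with Bernstein concentration for the Binomial variables $D_{ii}-(n-1)p$, yielding a contribution of order smaller than $\sqrt{np(1-p)}$.

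The main technical obstacle is the matching lower bound, uniformly over $\bv\perp\one_n$. Expanding
\[
\bv^\sT(\bL-\E\bL)\bv=\sum_{i<j}(v_i-v_j)^2(A_{ij}-p),
\]
one sees that a naive Weyl bound $\|\bL-\E\bL\|\leq\|\bA-\E\bA\|+\|\bD-(n-1)p\,\id_n\|$ is insufficient, since Bernstein combined with a union bound only yields $\max_i|D_{ii}-(n-1)p|=\Theta(\sqrt{np\log n})$, which strictly exceeds the target $2\sqrt{np(1-p)}$ when $p$ is bounded away from $0$. The near-cancellation between the off-diagonal and diagonal fluctuations must therefore be exploited. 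A natural approach is an $\epsilon$-net argument on the sphere $\{\bv\perp\one_n,\,\|\bv\|=1\}$ stratified by infinity-norm, combined with Hanson--Wright (or a direct Bernstein bound) on the edge-valued quadratic form, whose variance $p(1-p)\sum_{i<j}(v_i-v_j)^4$ is automatically small for delocalized $\bv$; separately one shows that sufficiently localized directions cannot push the value below $-2\sqrt{np(1-p)}$. Making this uniform control match the sharp constant $2$, despite the dominant-looking diagonal fluctuations, is the principal difficulty.
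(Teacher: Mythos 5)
Parts (i) and (ii) are fine. The fatal problem is in part (iii), and it is precisely the step you defer as ``the principal difficulty'': the uniform lower bound $\bv^\sT(\bL-\E\bL)\bv\ge -2(1+o(1))\sqrt{np(1-p)}$ over $\bv\perp\one_n$ is not just hard, it is false, so no $\eps$-net/Hanson--Wright refinement can complete your program. To see this, let $i^*$ be a vertex of minimum degree and test with $\bv=\bfe_{i^*}-\one_n/n$, which satisfies $\bv\perp\one_n$ and $\|\bv\|_2^2=1-1/n$; since $\bL\one_n=0$, we get $\bv^\sT\bL\bv=L_{i^*i^*}=D_{i^*i^*}$, hence the Fiedler-type bound $\lambda_2(\bL)\le \tfrac{n}{n-1}\min_i D_{ii}$. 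The degrees are $\Binom(n-1,p)$ variables, and for $np/\log n\to\infty$ (with $p$ bounded away from $1$) a standard moderate-deviation plus second-moment argument gives $\min_i D_{ii}\le (n-1)p-c\sqrt{np(1-p)\log n}$ with high probability (in fact $=(n-1)p-(1+o(1))\sqrt{2np(1-p)\log n}$). Therefore $\lambda_2(\bL)\le np-c\sqrt{np(1-p)\log n}$, which lies strictly below $np-2(1+o(1))\sqrt{np(1-p)}$ because $\log n\to\infty$. In other words, exactly the localized directions you hoped to rule out (mass concentrated on the minimum-degree vertex) drive $\lambda_2$ below the claimed value: at the spectral edge the diagonal fluctuations dominate and there is no cancellation to exploit; the correct second-order term for the extreme Laplacian eigenvalues of $G(n,p)$ is of order $\sqrt{np(1-p)\log n}$, not $\sqrt{np(1-p)}$. (Your upper-bound half also has a soft spot --- the extremal eigenvector of $\bA-\E\bA$ is not independent of $\bD$, so ``delocalization plus Bernstein'' needs care --- but this is moot, since the min-degree vector already gives a stronger upper bound.)

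So the honest conclusion is that part (iii) cannot be proved as stated; what your tools (Bai--Yin for $\bA-\E\bA$, Bernstein plus a union bound for the degrees, Weyl) genuinely deliver is the weaker two-sided statement $\lambda_2(\bL)=np-O(\sqrt{np\log n})$ with high probability. For comparison, the paper gives no proof of Lemma \ref{lemma:randomspectrumlaplace} at all --- it is presented as a consequence of Theorem \ref{lemma:Bai-Yin} --- but Bai--Yin only controls the centered adjacency part, not the diagonal, and the equality in (iii) overstates what is true. Your proposal deserves credit for correctly locating the obstruction (the $\Theta(\sqrt{np\log n})$ degree fluctuations), but the right move at that point was to test against the minimum-degree direction and conclude that the statement must be weakened (and then check whether the weaker bound still supports the feasibility argument in the proof of Theorem \ref{thm:convexgeneral}), rather than to posit a cancellation that does not occur.
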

\begin{proof}
The proof of points (i), (ii) in Lemma \ref{lemma:randomspectrumlaplace} is standard and 
can be found in \cite{chung1997spectral}. Further, we can write $\bL = \E\bL + (\bD - \E\bD) - (\bA - \E\bA)$
where $\E\bL = np\id_n - p\J_n$ and $\lambda_2(\E\bL) = \lambda_3(\E\bL) = \dots = \lambda_n(\E\bL) = np$.
Therefore, point (iii) follows from this together with the bound on the spectra of $\bA - \E\bA$, $\bD - \E\bD$. 
\end{proof}
Now we can state the proof of Theorem \ref{thm:convexgeneral}.
\begin{proof}[Proof of Theorem \ref{thm:convexgeneral}]
Using the fact that for any graph $G$ with adjacency matrix
$\bA_G \in \{0,1\}^{n\times n}$, $\lambda_1(\bA_G)\ge -\lambda_n(\bA_G)$, it suffices to
prove the theorem assuming that 
\begin{align*}
\limsup_{n\to \infty} {\frac{2(\lambda_{1}(\bA_{H_n})-\lambda_{k_n}(\bA_{H_n}))\sqrt{1-q_0}}{\sqrt{nq_0}}} = 1- C < 1.
\end{align*}
Assume that $G_n$ is generated
randomly according to $\P_{0,n}$.
Let ${\sf SDP}(G_n;H_n)$ be the sequence of the
optimal values of the (random) convex programs \eqref{equation:SDP}.
Let $\bD_{G_n}$ be a $n\times n$
diagonal matrix such that $(\bD_{G_n})_{ii} = \deg(i)$. In order to prove Theorem \ref{thm:convexgeneral},
we have to show that ${\sf SDP}(G_n;H_n)\geq 2e(H_n)$. In order to do this, we construct a sequence of matrices
$\bY_n$ which are feasible for problem \eqref{equation:SDP} and $\Tr\left((\bA_{G_n}\otimes \bA_{H_n})\bY_n\right) \ge 2e(H_n)$,
with high probability as $n \to \infty$.
We take
\begin{align}
\label{equation:constructedY}
\bY_n = \begin{cases}
a_n(\bD_{G_n}\otimes \id_{k_n})+ b_n(\bA_{G_n}\otimes (\bA_{H_n}+\id_{k_n})) + c_n\J_{nk_n} & \mbox{If $\lambda_{k_n}(\bA_{H_n})\ge(2e(H_n)-k_n^2)/k_n$},\\
b_n(\bD_{G_n}-\bA_{G_n})\otimes \id_{k_n} + b_n \bA_{G_n}\otimes \J_{k_n} & \mbox{otherwise},
\end{cases}
\end{align}
where
\begin{align*}
u_n &= \left[-\lambda_{k_n}(\bA_{H_n})-1+
\frac{k_n\lambda_{k_n}(\bA_{H_n})+k_n^2-2e(H_n)}{nk_n^2}\right]_{+},\\
a_n &= \frac{2e(H_n)+k_nu_n+nk_n^2-k_n^2}{2k_ne(G_n)(nk_n-1)},\\
b_n &= \frac{1}{2e(G_n)},\\
c_n &= \frac{k_n(k_n-1)-2e(H_n)-k_nu_n}{n^2k_n^2-nk_n}.
\end{align*}
Now, we show that $\bY_n$ is feasible for problem \eqref{equation:SDP}. 
First, consider the case where $\lambda_{k_n}(\bA_{H_n})\ge(2e(H_n)-k_n^2)/k_n$. In this case,
\begin{equation}
\label{eq:ulessthan}
u_{n}\le k_n-1-2e(H_n)/k_n \leq k_n.
\end{equation}
Hence, $c_n \ge 0$.
Also, $a_n\ge 0$ and $b_n \ge 0$. Thus,
$\bY_n \ge 0$, entrywise.
In addition, 
\begin{align}
\label{eq:maxdeg}
&\max_{i \in V(G_n)}\deg(i) < 2nq_0,\\
\label{eq:edges}
&e(G_n) > q_0n^2/4,
\end{align}
with high probability as $n\to \infty$. Thus,
using the fact that, 
$2e(H_n)\le k_n^2$, $n+1\le2n$ and for large enough $n$,
$nk_n-1\ge nk_n/2$, $n^2k_n^2-nk_n\ge n^2k_n^2/2$,
\begin{align}
\label{equation:elements1}
a_n\left(\max_{i \in V(G_n)}\deg(i)\right)+c_n &\leq \frac{2nq_0(nk_n^2 + k_nu_n)}{(n^2/2)k_nq_0(nk_n/2)} + \frac{(k_n^2-k_n)-k_nu_n}{n^2k_n^2/2}\nonumber\\
&\leq\frac{2n^2k_n^2q_0}{n^3k_n^2q_0/4} + \frac{2nq_0k_nu_n}{n^3k_n^2q_0/4} + \frac{k_n^2}{n^2k_n^2/2} \nonumber\\
&= \frac{16}{n} + \frac{8u_n}{n^2k_n} + \frac{2}{n^2},
\end{align}
which is less than $1$ for large enough $n$.
Also, similarly, using \eqref{eq:edges},
\begin{align}
b_n+c_n &\le \frac{1}{(q_0n^2)/2} + \frac{k_n^2}{n^2k_n^2/2} \nonumber\\
\label{equation:elements2}
&= \frac{2}{q_0n^2} + \frac{2}{n^2} \le 1,
\end{align}
for large enough $n$. Finally, using \eqref{eq:ulessthan}, \eqref{eq:edges},
\begin{align}
u_nb_n+c_n &\le \frac{k_n}{(q_0n^2)/2} + \frac{k_n^2}{n^2k_n^2/2}\nonumber\\
\label{equation:elements3}
&= \frac{2k_n}{q_0n^2} + \frac{2}{n^2} \le 1
\end{align}
for large enough $n$. Therefore, according to the construction of $\bY_n$ as in \eqref{equation:constructedY},
using equations \eqref{equation:elements1},\eqref{equation:elements2},\eqref{equation:elements3} for large enough $n$,
$\bY_n \le 1$, entrywise.
Also,
\begin{align*}
\Tr (\bY_n \J_{nk_n}) = 2a_ne(G_n)k_n + 4b_ne(H_n)e(G_n) + 2b_nk_nu_ne(G_n)+c_nn^2k^2_n = k_n^2.
\end{align*}
Moreover, for $i = 1,2,\dots,k_n$,
\begin{align*}
\Tr (\bY_n (\id_n\otimes(\bfe_i \bfe_i^\sT))) = 2e(G_n)a_n + nc_n = 1.
\end{align*}
Finally, for $j = 1,2,\dots,n$,
\begin{align}
\Tr (\bY_n ((\bfe_j \bfe_j^\sT)\otimes \id_{k_n})) &\leq k_na_n\left(\max_{i \in V(G_n)}\deg(i)\right)+k_nc_n\nonumber\\
\label{equation:elements4}
&\leq \frac{16k_n}{n} + \frac{8u_n}{n^2} + \frac{2k_n}{n^2} \leq 1,
\end{align}
for large enough $n$. Second inequality in \eqref{equation:elements4} is by \eqref{equation:elements1}.
Next, we consider the case in which $\lambda_{k_n}(\bA_{H_n})\ge(2e(H_n)-k_n^2)/k_n$. In this case, since
\begin{align*}
\frac{\max_{i \in V(G_n)}\deg(i)}{2e(G_n)} \leq \frac{2nq_0}{(n^2q_0/2)} \leq 1
\end{align*}
with high probability, as $n\to \infty$,
$0\le \bY_n\le1$ entrywise. Further, for $i = 1,2,\dots,k_n$ and $j = 1,2,\dots,n$,
\begin{align*}
&\Tr (\bY_n \J_{nk_n}) = 2b_nk_n^2e(G_n) = k_n^2,\\
&\Tr (\bY (\id_n\otimes(\bfe_i \bfe_i^\sT))) = 2e(G_n)b_n = 1,\\
&\Tr (\bY_n ((\bfe_j \bfe_j^\sT)\otimes \id_{k_n})) \leq k_nb_n\left(\max_{i \in V(G_n)}\deg(i)\right)\leq \frac{2k_nnq_0}{(n^2q_0/2)} = \frac{4k_n}{n} \leq 1,
\end{align*}
for large enough $n$, with high probability. 
Finally, we have to show that the proposed $\bY_n$ is positive semidefinite with high probability.
In order to show this it is sufficient to show that
\begin{align*}
\tilde \bY_n = 2e(G_n)\tilde a_n(\bD_{G_n}-\bA_{G_n})\otimes \id_{k_n} + \bA_{G_n}\otimes \tilde \bA_{H_n} \succeq 0,
\end{align*}
where
\begin{align*}
\tilde \bA_{H_n} =
\begin{cases}
\bA_{H_n} + (u_n+2e(G_n)a_n)\id_{k_n},& \mbox{ if $\lambda_{k_n}(\bA_{H_n}) \ge (2e(H_n)-k_n^2)/k_n$,}\\
\J_{k_n}& \mbox{ otherwise,}
\end{cases}
\end{align*}
and
\begin{align*}
\tilde a_n =
\begin{cases}
a_n,& \mbox{ if $\lambda_{k_n}(\bA_{H_n}) \ge (2e(H_n)-k_n^2)/k_n$,}\\
b_n& \mbox{ otherwise.}
\end{cases}
\end{align*}
If $\lambda_{k_n}(\bA_{H_n}) < (2e(H_n)-k_n^2)/k_n$, then
$\tilde \bA_{H_n} = \J_{k_n} \succeq 0$. Otherwise,
\begin{align*}
\lambda_{k_n}(\tilde \bA_{H_n}) &= \lambda_{k_n}(\bA_{H_n}) + u_n + 2a_ne(G_n) \\
&\ge \lambda_{k_n}(\tilde \bA_{H_n}) - \lambda_{k_n}(\tilde \bA_{H_n}) - 1 + 
\frac{k_n\lambda_{k_n}(\bA_{H_n})+k_n^2-2e(H_n)}{nk_n^2}\\
& \;\;\;\;+ \frac{2e(H_n)+k_nu_n+nk_n^2-k_n^2}{k_n(nk_n-1)}\\
& = \frac{1}{nk_n-1}\left(u+\lambda_{k_n}(\tilde \bA_{H_n})+1 - \frac{k\lambda_{k_n}(\tilde \bA_{H_n})+k_n^2-2e(H_n)}{nk_n^2}\right) \ge 0.
\end{align*}
Therefore, $\tilde \bA_{H_n}$ is positive semidefinite in both cases.\\
Let ${\bz}$ be an arbitrary vector in $\reals^{nk_n}$. We can write
\begin{align*}
\bz = \bz_{\parallel} + \bz_{\perp} 
\end{align*}
where
\begin{align*}
\bz_{\parallel} &= \one_n\otimes \bx,\;\;\;\;\;\;\;\;\;\bx\in\reals^k,\\
\bz_{\perp} &= \begin{bmatrix}
    \by_{1} \\
    \by_{2} \\
    \vdots \\
    \by_{n-1} \\
    -\by_1-\by_2-\dots -\by_{n-1} \\
\end{bmatrix}, \;\;\;\;\;\;\;\by_i \in\reals^k.\\
\end{align*}
Using Lemma \ref{lemma:randomspectrumlaplace}, $\bz_{\parallel}$ is in the nullspace of 
$\left(\bD_{G_n}-\bA_{G_n}\right)\otimes \id_{k_n}$. Therefore,
\begin{align*}
\left\langle \bz, \tilde \bY_n \bz\right\rangle & = \left\langle\bz_{\parallel},\left(\bA_{G_n}\otimes\tilde \bA_{H_n}\right)\bz_{\parallel}\right\rangle  \\
& \;\;\;\;+2\left\langle\bz_{\perp},\left(\bA_{G_n}\otimes\tilde \bA_{H_n}\right)\bz_{\parallel}\right\rangle \\
& \;\;\;\;+ \left\langle\bz_{\perp},\left(2e(G_n)\tilde a_n(\bD_{G_n}-\bA_{G_n})\otimes \id_{k_n} + \bA_{G_n}\otimes \tilde \bA_{H_n}\right)\bz_{\perp}\right\rangle.
\end{align*}
Note that
\begin{align*}
\left\langle\bz_{\parallel},\left(\bA_{G_n}\otimes\tilde \bA_{H_n}\right)\bz_{\parallel}\right\rangle &= 2e(G)\left\langle \bx,\tilde \bA_{H_n} \bx\right\rangle,\\
\left\langle\bz_{\perp},\left(\bA_{G_n}\otimes\tilde \bA_{H_n}\right)\bz_{\parallel}\right\rangle &= \sum_{i=1}^{n-1}(\deg(i)-\deg(n))\left\langle \by_i,\tilde \bA_{H_n} \bx\right\rangle.
\end{align*}
Also using Lemmas \ref{lemma:randomspectrum}, \ref{lemma:randomspectrumlaplace} we have
\begin{align*}
\left\langle\bz_{\perp},\tilde \bY_n\bz_{\perp} \right\rangle\ge \left(2nq_0\tilde a_ne(G_n)-2(1+o(1))\sqrt{nq_0(1-q_0)}\left(2\tilde a_ne(G_n)+\lambda_{1}(\tilde \bA_{H_n})\right)\right)\|\bz_{\perp}\|^2.
\end{align*}
Note that if $\lambda_{k_n}(\bA_{H_n}) \ge (2e(H_n)-k_n^2)/k_n$,
\begin{align*}
-\lambda_{k_n}(\bA_{H_n})-1+ \frac{k_n\lambda_{k_n}(\bA_{H_n})+k_n^2-2e(H_n)}{nk_n^2} \ge -1.
\end{align*}
Therefore,
\begin{align*}
\lambda_{1}(\tilde \bA_{H_n}) &= \lambda_{1}(\bA_{H_n}) + u_n + 2a_ne(G_n)\\
&\le \lambda_{1}(\bA_{H_n})-\lambda_{k_n}(\bA_{H_n}) + \frac{k_n\lambda_{k_n}(\bA_{H_n})+k_n^2-2e(H_n)}{nk_n^2} \\
& \;\;\;\;+ \frac{2e(H_n)+k_nu_n+nk_n^2-k_n^2}{k_n(nk_n-1)} \\
&\le \lambda_{1}(\bA_{H_n})-\lambda_{k_n}(\bA_{H_n}) + 1.
\end{align*}
Otherwise, note that 
\begin{align*}
\lambda_{1}(\bA_{H_n}) \ge \frac{2e(H_n)}{k_n}.
\end{align*}
Therefore, if $\lambda_{k_n}(\bA_{H_n}) < (2e(H_n)-k_n^2)/k_n$ then
\begin{align*}
\lambda_{1}(\bA_{H_n}) - \lambda_{k_n}(\bA_{H_n}) \ge k_n \ge \lambda_{1}(\tilde \bA_{H_n}).
\end{align*}
Thus,
\begin{align*}
\lambda_{1}(\tilde \bA_{H_n}) \le \lambda_{1}(\bA_{H_n})-\lambda_{k_n}(\bA_{H_n}) + 1.
\end{align*}
In both cases. Using the fact that, 
$\limsup_{n\to \infty} {(2(\lambda_{1}(\bA_{H_n})-\lambda_{k_n}(\bA_{H_n}))\sqrt{1-q_0})/{\sqrt{nq_0}}}=1-C$,
\begin{align*}
\left\langle\bz_{\perp},\tilde \bY_n\bz_{\perp}\right\rangle \ge 2C\tilde a_ne(G_n)nq_0\|\bz_{\perp}\|^2.
\end{align*}
Thus, in order to show positive semidefiniteness of $\tilde \bY_n$, it suffices to show that
\begin{align*}
\scriptsize
\bY^\prime_n = \begin{bmatrix}
    2e(G_n)\tilde \bA_{H_n} & (\deg(1)-\deg(n))\tilde \bA_{H_n} & (\deg(2)-\deg(n))\tilde \bA_{H_n} & \cdots & (\deg(n-1)-\deg(n))\tilde \bA_{H_n}\\
    (\deg(1)-\deg(n))\tilde \bA_{H_n} & 2C\tilde a_ne(G_n)nq_0\id_{k_n} & 0 & \cdots & 0\\
    (\deg(2)-\deg(n))\tilde \bA_{H_n} & 0 & 2C\tilde a_ne(G_n)nq_0\id_{k_n} & \cdots & 0\\
    \vdots & \vdots & \vdots & \ddots & \vdots\\
    (\deg(n-1)-\deg(n))\tilde \bA_{H_n} & 0 &0& \cdots & 2C\tilde a_ne(G_n)nq_0\id_{k_n}\\
\end{bmatrix} \succeq 0.
\end{align*}
Note that using Bernstein's inequality,
\begin{align*}
\P \left\{\left|\max_{i\in V(G_n)}\deg(i) - nq_0\right| \geq 2\sqrt{nq_0\log n}\right\}
&\leq 2n\exp\left\{\frac{2nq_0\log n}{nq_0(1-q_0)+(2/3)\sqrt{nq_0\log n}} \right\}\\
&\leq 2\exp\left\{\log n - 2\log n\right\} = \frac{2}{n},
\end{align*}
for large enough $n$. Thus,
\begin{align*}
\P \left\{\left|\max_{i\in V(G_n)}\deg(i) - nq_0\right| \geq 2\sqrt{nq_0\log n}\right\} \to 0,
\end{align*}
as $n \to \infty$. 
Hence, $\deg(i)-\deg(n)\le4\sqrt{nq_0\log n}$, for all $n$, with high probability, as $n \to \infty$. 
Therefore, using Schur's theorem, since $C>0$, we need to show that
\begin{align}
2e(G_n)\tilde \bA_{H_n} - 16\left(2C\tilde a_ne(G_n)nq_0\right)^{-1}n^2q_0\log n\tilde \bA_{H_n}^2\nonumber\\
= C^\prime\tilde\bA_{H_n}\left(\frac{C\tilde a_n (e(G_n))^2}{4n\log n}\id_{k_n}- \tilde \bA_{H_n}\right)\succeq 0.
\end{align}
Where $C^\prime >0$. This holds, since $\tilde \bA_{H_n}\succeq 0$. Further,
$C\tilde a_n (e(G_n))^2/(4n\log n)$
is $\Theta(n/\log n)$ and
\begin{align*}
\lim_{n\to\infty} \frac{(\lambda_{1}(\bA_{H_n})-\lambda_{k_n}(\bA_{H_n}))\log n}{n} = 0.
\end{align*}
Hence,
$\bY_n$ is feasible for problem \eqref{equation:SDP}, with high probability as $n\to \infty$. Now, note that
\begin{align*}
\Tr\left((\bA_{G_n}\otimes \bA_{H_n})\bY_n\right) \ge 4e(H_n)e(G_n)b_n = 2e(H_n).
\end{align*}
Thus, with high probability as $n\to \infty$ under null, the optimal value of 
problem \eqref{equation:SDP}, ${\sf SDP}(G_n;H_n)$, is bigger than or equal $2e(H_n)$.
Note that the optimal value of \eqref{equation:SDP} under 
the alternative when there is no noise
is $2e(H_n)$. Therefore, under the conditions of the Theorem \ref{thm:convexgeneral},
for the test based on ${\sf SDP}(G_n;H_n)$,
\begin{align*}
\P_{0,n}\{T(G_n)=1\} \to 1,
\end{align*}
as $n\to \infty$ and the proof is complete.
\end{proof}
\section{Proofs: Multiple planted subgraphs}

\subsection{Proof of Theorem \ref{thm:lowerbdmultiple}}

In order to state the proofs in this section we will use the following notation. Let 
$\varphi_1,\varphi_2,\dots,\varphi_m$ be labelings of $V(H_n)$ in $[n]$. We set
\begin{align*}
M(\varphi_1,\varphi_2,\dots,\varphi_m) \equiv \left|\bigcup_{l=1}^m \varphi_l(E(H_n)) \right|.
\end{align*}
Also recall
\begin{align*}
N(H_n;G_n) \equiv \Big|\big\{\,\varphi\in\cL(H_n;n):\;\; \varphi(E(H_n))\subseteq E(G_n)\,
\big\}\Big| \, .
\end{align*}
We also denote $v(H_n)$ by $k_n$.
First we prove the following Lemma which is useful in the proof of Theorem \ref{thm:lowerbdmultiple}.

\begin{lemma}
\label{lemma:Nconcentrationmultiple}
Let $\{H_n\}_{n\ge1}, q_0, \P_{0,n}, \P_{1,n}$ be as in Theorem \ref{thm:lowerbdmultiple}.
Under the assumptions of Theorem \ref{thm:lowerbdmultiple}, for all $\eps>0$
\begin{align}\label{eq:Nconcentrationmultiple}
\lim_{n\to \infty} \P_{0,n}\left\{N(H_n;G_n)^{m_n} \le (1-\eps)(\E_{0,n}N(H_n;G_n))^{m_n}\right\} = 0.
\end{align}
\end{lemma}
\begin{proof}
As in \eqref{eq:chebyshev} in the proof of Lemma \ref{lemma:Nconcentrationp1}, using Chebyshev's inequality
\begin{align}
\label{eq:ntomnoverentomn}
&\P\left\{ \frac{N(H_n;G_n)^{m_n}}{(\E_{0,n}N(H_n;G_n))^{m_n}}\leq 1-\eps\right\} =
\P\left\{\frac{N(H_n;G_n)}{\E_{0,n}N(H_n;G_n)}\leq (1-\eps)^{1/m_n}\right\}\nonumber\\
&\leq\P\left\{\frac{N(H_n;G_n)}{\E_{0,n}N(H_n;G_n)}\leq \Big(1-\frac{\eps}{m_n}\Big)\right\}\leq
\frac{m_n^2\bar\Delta(n,H_n)}{\eps^2(\E_0 N(H_n;G_n))^2}.
\end{align}
Where $\bar\Delta(n,H_n)$ is defined in \eqref{eq:deltadefinition}. 
Note that, using \eqref{eq:deltaovern} we have
\begin{align*}
\frac{m_n^2\bar\Delta(n,H_n)}{(\E_0 N(H_n;G_n))^2} \leq
C\exp\left\{-\tilde u\left(f(\tilde u)-\frac{\log k_n}{\tilde u}-\frac{2\log m_n}{\tilde u}\right)\right\}
\end{align*}
where $f(u)$ is defined in \eqref{eq:fofu} and \eqref{eq:fofk} and $\tilde u = \arg\min_{2\leq u\leq k_n}\{uf(u)\}$.
Using \eqref{eq:fofuminuslogkoveru},
\begin{align*}
f(\tilde u)-\frac{\log k_n}{\tilde u}-\frac{2\log m_n}{\tilde u} \geq \log n - \frac{e_{H_n}(\tilde u)}{\tilde u}\log(1/q_0)-(5/2)\log k_n-\frac{2\log m_n}{\tilde u} + C
\end{align*}
for some constant $C$. Note that $0\leq e_{H_n}(\tilde u)/\tilde u\leq \min(d(H_n),\tilde u)$. Therefore,
\begin{align}
\label{eq:ftildeuminus}
f(\tilde u)-\frac{\log k_n}{\tilde u}-\frac{2\log m_n}{\tilde u} \geq \log n - \min\left(d(H_n),\tilde u\right)\log(1/q_0) -\frac{2\log m_n}{\tilde u}  -(5/2)\log k_n + C.
\end{align}
The right hand side of \eqref{eq:ftildeuminus}, as a function of
$\tilde u$, is minimized at $\tilde u = 2$ or $\tilde u = d(H_n)$
(we can assume, without loss of generality, $d(H_n)\ge 2$). Therefore,
it is sufficient to show that the right hand side of \eqref{eq:ftildeuminus} goes to infinity for $\tilde u \in\{ 2,d(H_n)\}$.
For $\tilde u = 2$, the right hand side of \eqref{eq:ftildeuminus} is equal to
\begin{align}
\label{eq:Xforuequal2}
X = \log n\left(1-\frac{2}{\log n}\log(1/q_0) - \frac{\log m_n}{\log n} -(5/2)\frac{\log k_n}{\log n} + o(1)\right) \to \infty
\end{align}
since by the assumption of Theorem \ref{thm:lowerbdmultiple}
\begin{align*}
\lim\sup_{n\to \infty}\frac{(5/2)\log k_n+\log m_n}{\log n}<1.
\end{align*}
For $\tilde u = d(H_n)$, the right hand side of \eqref{eq:ftildeuminus} is equal to
\begin{align*}
X = \log n\left(1-\frac{d(H_n)}{\log n}\log(1/q_0) - \frac{2\log m_n}{d(H_n)\log n} -(5/2)\frac{\log k_n}{\log n}+o(1)\right).
\end{align*}
Fix a constant  $M>0$ large enough (to be adjusted below). For $d(H_n) \leq M$ we have
\begin{align}
\label{eq:dlessthanC}
X &\geq \log n\left(1-\frac{M\log(1/q_0)}{\log n} - \frac{2\log m_n}{d(H_n)\log n} - (5/2)\frac{\log k_n}{\log n}+o(1)\right)\nonumber\\
&\geq \log n\left(1-\frac{2}{M} - \frac{\log m_n}{\log n} - (5/2)\frac{\log k_n}{\log n}+o(1)\right)
\end{align}
where the last inequality holds for all  $n$ large enough. For $d(H_n) \geq M$, using $m_n\leq n$, we have 
\begin{align}
\label{eq:dbiggerthanC}
X &\geq \log n\left(1-\frac{d(H_n)\log(1/q_0)}{\log n} - \frac{2\log m_n}{M\log n} - (5/2)\frac{\log k_n}{\log n}+o(1)\right)\nonumber\\
&\geq \log n\left(1- \frac{d(H_n)\log (1/q_0)}{\log n} - \frac{2}{M} - (5/2)\frac{\log k_n}{\log n}+o(1)\right).
\end{align}
Combining \eqref{eq:dbiggerthanC}, \eqref{eq:dlessthanC}, we have
\begin{align}
\label{eq:Xbiggerthandeltaminus2overC}
X &\geq \log n\left(\min\left(1-\frac{\log m_n}{\log n} - (5/2)\frac{\log k_n}{\log n}, 1- \frac{d(H_n)\log (1/q_0)}{\log n} - (5/2)\frac{\log k_n}{\log n}\right) - \frac{2}{M} - o(1)\right)\nonumber\\
&\geq \left(\delta - \frac{2}{M}\right)\log n.
\end{align}
Where
\begin{align*}
\delta = \min\left(\liminf_{n\to \infty}\left\{1-\frac{\log m_n}{\log n} - (5/2)\frac{\log k_n}{\log n}\right\},\liminf_{n\to \infty}\left\{1- \frac{d(H_n)\log (1/q_0)}{\log n} - (5/2)\frac{\log k_n}{\log n}\right\}\right)
\end{align*}
and by assumptions of Theorem \ref{thm:lowerbdmultiple},
$\delta>0$. Hence, by taking $M = 4/\delta$, 
using \eqref{eq:Xbiggerthandeltaminus2overC}, we deduce that the right hand side of \eqref{eq:ftildeuminus}
goes to infinity for $\tilde u = d(H_n)$. Combining with \eqref{eq:Xforuequal2}, we deduce that 
under the assumptions of Theorem \ref{thm:lowerbdmultiple}
\begin{align*}
f(\tilde u)-\frac{\log k_n}{\tilde u}-\frac{2\log m_n}{\tilde u} \to \infty
\end{align*}
and
\begin{align}
\label{eq:msquaredeltaovernsquare}
\frac{m_n^2\bar\Delta(n,H_n)}{(\E_0 N(H_n;G_n))^2}\to 0.
\end{align}
as $n \to \infty$.
Hence, using \eqref{eq:ntomnoverentomn}, \eqref{eq:msquaredeltaovernsquare}
we get that under the assumptions of Theorem \ref{thm:lowerbdmultiple}, 
\begin{align*}
\lim_{n\to \infty} \P_{0,n}\left\{(N(H_n;G_n))^{m_n} \le (1-\eps)(\E_0N(H_n;G_n))^{m_n}\right\} = 0
\end{align*}
This completes the proof of lemma.
\end{proof}
Now we can state the proof of Theorem \ref{thm:lowerbdmultiple}.
\begin{proof}[Proof of Theorem \ref{thm:lowerbdmultiple}]
For the laws $\P_{0,n}$, $\P_{1,n}$ defined as in Theorem \ref{thm:lowerbdmultiple} we have
\begin{align}
\begin{split}
\label{equation:RNderivative}
\frac{\d \P_{1,n}}{\d \P_{0,n}}(G_n) &= \frac{1}{(n)_{k_n}^{m_n}}\sum_{\varphi_1,\dots,\varphi_{m_n}\in \mathcal{L}(H_n,n)}
\left(\frac{1}{q_0}\right)^{M\left(\varphi_1,\dots,\varphi_{m_n}\right)}
\mathbb I\left\{\left(\bigcup_l\varphi_l(E(H_n))\right)\subseteq E(G_n)\right\}\\
&= \frac{1}{(n)_{k_n}^{m_n}q_0^{m_ne(H_n)}}\sum_{\varphi_1,\dots,\varphi_{m_n}\in \mathcal{L}(H_n,n)}
\mathbb I\left\{\left(\bigcup_l\varphi_l(E(H_n))\right)\subseteq E(G_n)\right\}\\
&- \frac{1}{(n)_{k_n}^{m_n}q_0^{m_ne(H_n)}}\sum_{\varphi_1,\dots,\varphi_{m_n}\in \mathcal{L}(H_n,n)}
\left(1-q_0^{m_ne(H_n)-M(\varphi_1,\varphi_2,\dots,\varphi_{m_n})}\right)\mathbb I\left\{\left(\bigcup_l\varphi_l(E(H_n))\right)\subseteq E(G_n)\right\}\\
&= \frac{N(H_n;G_n)^{m_n}}{\left(\E_{0,n}N(H_n;G_n)\right)^{m_n}}- X_n.
\end{split}
\end{align}
where  
\begin{align*}
X_n = \frac{1}{(n)_{k_n}^{m_n}q_0^{m_ne(H_n)}}\sum_{\varphi_1,\dots,\varphi_{m_n}\in \mathcal{L}(H_n,n)}
\left(1-q_0^{m_ne(H_n)-M(\varphi_1,\varphi_2,\dots,\varphi_{m_n})}\right)\mathbb I\left\{\left(\bigcup_l\varphi_l(E(H_n))\right)\subseteq E(G_n)\right\}.
\end{align*} 
Note that under the assumptions of Theorem \ref{thm:lowerbdmultiple},
using Lemma \ref{lemma:Nconcentrationmultiple}
\begin{align*}
\lim_{n\to \infty} \P_{0,n}\left\{(N(H_n;G_n))^{m_n} \le (1-\eps)(\E_0N(H_n;G_n))^{m_n}\right\} = 0.
\end{align*}
Hence, using the same argument used in proof of Theorem \ref{thm:lowerbdp1},
\begin{align}
\label{eq:Nmnoverexpnmnto1}
\frac{N(H_n;G_n)^{m_n}}{\E_{0,n}(N(H_n;G_n)^{m_n})}\toprob 1.
\end{align}
Now, we prove that under the assumptions of Theorem \ref{thm:lowerbdmultiple},
$X_n \toprob$ 0. Note that we have
\begin{align}
\label{eq:xnlessthan}
X_n &\leq \frac{1}{(\E_{0,n}N(H_n;G_n))^{m_n}}\sum_{\varphi_1,\dots,\varphi_{m_n}\in \mathcal L(H_n,n)}\mathbb I\left\{M(\varphi_1,\dots,\varphi_{m_n}) < m_ne(H_n)\right\}\\
&=
\frac{(N(H_n;G_n))^{m_n}}{(\E_{0,n}N(H_n;G_n))^{m_n}}\frac{1}{(N(H_n;G_n))^{m_n}}\sum_{\varphi_1,\dots,\varphi_{m_n}\in
  \mathcal L(H_n,n)}\mathbb I\left\{M(\varphi_1,\dots,\varphi_{m_n}) <
  m_ne(H_n)\right\}\\
& \equiv \frac{(N(H_n;G_n))^{m_n}}{(\E_{0,n}N(H_n;G_n))^{m_n}} \,
Q_{m_n}(H_n;G_n)\, .\label{eq:Qn}
\end{align}
Note that $Q_{m_n}(G_n;H_n)$ can be interpreted as the probability
that, drawing the embeddings $\varphi_{l}$, $l\in\{1,\dots,m_m\}$ independently and
uniformly at random in $\cL(H_n,h)$, at least two of them share an
edge.
By union bound, we have 
\begin{align}
Q_{m_n}(H_n;G_n)&\le m_n^2 \, Q_2(H_n;G_n)\, ,\\
Q_{2}(H_n;G_n) &= \frac{N_2(H_n;G_n)}{N(H_n;G_n)^2}\, ,\label{eq:Qn_Q2}
\end{align}
and 
\begin{align*}
N_2(H_n;G_n)= \left|\left\{(\varphi_1,\varphi_2);\varphi_1(E(H_n))\cap\varphi_2(E(H_n))\neq \emptyset, \varphi_i(E(H_n))\subseteq E(G_n) \;\; \text{for}\;\; i=1,2 \right\}\right|.
\end{align*}
Note that 
\begin{align*}
\E_{0,n}N_2(H_n;G_n) \leq \sum_{l=2}^{k_n}n^{2k_n-l}q_0^{2e(H_n)-e_{H_n}(l)}\leq k_nn^{2k_n-\tilde l}q_0^{2e(H_n)-e_{H_n}(
\tilde l)}
\end{align*}
where $\tilde l = \arg\max_{2\leq l\leq k_n}n^{2k_n-l}q_0^{2e(H_n)-e_{H_n}(l)}$. Hence,
\begin{align*}
\log\frac{m_n^2\E_{0,n}N_2(H_n;G_n)}{(\E_{0,n} N(H_n;G))^2} &\leq -\tilde l\left(\log n - \frac{\log k_n}{\tilde l} - \frac{2\log m_n}{\tilde l} - \frac{e_{H_n}(\tilde l)}{\tilde l}\log(1/q_0)\right)\\
&\leq -\tilde l\left(\log n - \min\left(\tilde l,d(H_n)\right)\log(1/q_0) - \frac{\log k_n}{\tilde l} - \frac{2\log m_n}{\tilde l}\right).
\end{align*}
Note that in the proof of Lemma \ref{lemma:Nconcentrationmultiple}, we proved that 
under the assumptions of Theorem \ref{thm:lowerbdmultiple}, for all $2\leq l\leq k_n$
\begin{align*}
\min_{2\le l\le k_n}\left[\log n - \min\left(d(H_n), l\right)\log(1/q_0) -\frac{2\log
    m_n}{l}  -(5/2)\log k_n\right] \to \infty
\end{align*}
as $n \to \infty$. Hence, under the assumptions of Theorem \ref{thm:lowerbdmultiple}, 
\begin{align*}
-\tilde l\left(\log n - \min\left(\tilde l,d(H_n)\right)\log(1/q_0) - \frac{\log k_n}{\tilde l} - \frac{2\log m_n}{\tilde l}\right) \to -\infty
\end{align*}
and 
\begin{align}
\label{eq:mnsquaren2overnsquare}
\frac{m_n^2\E_{0,n}N_2(H_n;G_n)}{(\E_{0,n}N(H_n;G_n))^2}\to 0.
\end{align}
Note that
\begin{align*}
m_n^2Q_2(H_n;G_n) = \frac{m_n^2N_2(H_n;G_n)}{(\E_{0,n}N(H_n;G_n))^2}\frac{(\E_{0,n}N(H_n;G_n))^2}{N(H_n;G_n)^2}.
\end{align*}
Using \eqref{eq:mnsquaren2overnsquare} and Markov's inequality
\begin{align*}
\frac{m_n^2N_2(H_n;G_n)}{(\E_{0,n}N(H_n;G_n))^2} \toprob 0.
\end{align*}
Hence, by \eqref{eq:Qn}, \eqref{eq:Qn_Q2}, and \eqref{eq:xnlessthan} 
\begin{align*}
X_n \toprob 0.
\end{align*}
Thus, using \eqref{equation:RNderivative} and \eqref{eq:Nmnoverexpnmnto1}, under $\P_{0,n}$
\begin{align*}
\frac{\d\P_{1,n}}{\d\P_{0,n}}(G_n)\toprob 1
\end{align*}
and using Lemma \ref{lemma:likelihoodtest} the proof is complete.
\end{proof}

\subsection{Proof of Theorem \ref{thm:upperbdmultiplebdd}}
\begin{proof}[Proof of Theorem \ref{thm:upperbdmultiplebdd}]
Let $N_{0,n}, N_{1,n}$ be the random variables denoting the number of edges of $G_n$ 
when it is generated according to null and alternative models, respectively. We would like to show that
there exists $t_n^*$ such that 
$N_{0,n} < t_n^*$ and $N_{1,n} > t_n^*$ with high probability as $n \to \infty$.
Let
\begin{align*}
t_n^* = \frac{n(n-1)q_0}{2}+\delta_nm_ne(H_n)
\end{align*}
for some $\delta_n>0$. Note that $N_{0,n}$ is a binomial 
random variable where $\E N_{0,n} = n(n-1)q_0/2$ and $\Var(N_{0,n}) = n(n-1)q_0(1-q_0)/2$. Hence, 
using Chebyshev's inequality
\begin{align*}
N_{0,n} \leq \frac{n(n-1)q_0}{2} + C_n\sqrt{\frac{n(n-1)q_0(1-q_0)}{2}}
\end{align*}
with probability $1- 1/C_n^2$.
Since $\lim\inf_{n\to \infty} m_ne(H_n)/n=\infty$,
taking $C_n\to \infty$ such that $C_n/\delta_n < m_ne(H_n)/n$, we have
$N_{0,n} < t^*_n$ with high probability as $n\to \infty$.
Note that $N_{1,n}$ is monotonically increasing in $m_n$. Hence, in
order to show  that $N_{1,n} > t_n^*$ as $n\to \infty$ we can assume,
without loss of generality,  
$\lim\sup _{n\to\infty}m_ne(H_n)/n^2 = 0$. We have 
\begin{align}
\label{eq:edgecountalter}
N_{1,n} = X_n + m_ne(H_n)-Z_{0,n}-\sum_{i=1}^{m_n-1}iZ_{i,n}.
\end{align}
Here $X_n$ is denotes the number of 
edges in the graph before adding the copies of $H_n$. Hence, $\E X_n = n(n-1)q_0/2$ and $\Var(X_n) = n(n-1)q_0(1-q_0)/2$.
Further, $m_ne(H_n)$ is the total number of
edges of the planted subgraphs; $Z_{0,n}$ is the number of edges in the planted subgraphs
that are present before adding the subgraphs and
$Z_{i,n}$ is the number of edges that are present in exactly $i+1$ different embeddings. Using Chebyshev's inequality
\begin{align}
\label{eq:XandZchebyshev}
&X_n\geq \frac{n(n-1)q_0}{2} - \tilde
C_n\sqrt{\frac{n(n-1)q_0(1-q_0)}{2}}\;\;\;\;\text{with probability at
  least $1- \frac{1}{\tilde C_n^2}$},\\
&Z_{0,n}\leq m_ne(H_n)q_0 + \tilde
C_n\sqrt{\frac{m_ne(H_n)q_0(1-q_0)}{2}}\;\;\;\;\text{with probability
  at least $1- \frac{1}{\tilde C_n^2}$}.
\end{align}
(The last inequality follows because $Z_{0,n}$ is dominated by a
binomial random variable with parameters $m_ne(H_n)$, $q_0$.)

In addition, 
\begin{align*}
\sum_{i=1}^{m_n-1} i\E Z_{i,n} &\leq \sum_{i=1}^{m_n-1}\frac{i{m_n\choose{i+1}}(e(H_n))^{i+1}} {{n\choose 2}^i} 
\leq \sum_{i=1}^{m_n-1}\frac{i(m_n e(H_n))^{i+1}}{n^{2i}} \\
&\leq \frac{(m_ne(H_n))^2}{(n^2-m_ne(H_n))(1-m_ne(H_n)/n^2)}\leq \frac{2(m_ne(H_n))^2}{n^2-m_ne(H_n)}
\end{align*}
for large enough $n$. Therefore, using Markov's inequality,
\begin{align}
\label{eq:Zimarkov}
\sum_{i=1}^{m_n-1}iZ_{i,n} \leq \eps m_ne(H_n) \;\;\;\;\text{with probability $1-\frac{2m_ne(H_n)}{\eps(n^2-m_ne(H_n))}$}.
\end{align}
Now using \eqref{eq:edgecountalter}-\eqref{eq:Zimarkov}, we can write
\begin{align*}
N_{1,n} \geq \frac{n(n-1)q_0}{2} + (1-q_0-\eps)m_ne(H_n)-\tilde C_n\sqrt{\frac{n(n-1)q_0(1-q_0)}{2}} - \tilde C_n\sqrt{\frac{m_ne(H_n)q_0(1-q_0)}{2}}
\end{align*}
with probability $1- 2/\tilde C_n^2-3m_ne(H_n)/(\eps n^2)$.
Since $\lim\inf_{n\to \infty} m_ne(H_n)/n=\infty$,
taking $\eps < 1-q_0-\delta_n$ and $\tilde C_n \to \infty$ such that $\tilde C_n/(1-q_0-\eps-\delta_n) < m_ne(H_n)/(n+\sqrt{m_ne(H_n)})$,
we have
\begin{align*}
N_{1,n} > t^*_n
\end{align*}
with high probability as $n\to\infty$. Therefore, the two models are strongly distinguishable 
under the assumptions of Theorem \ref{thm:upperbdmultiplebdd} and this completes the proof.
\end{proof}

\end{document}